\newlist{abbrv}{itemize}{1}
\setlist[abbrv,1]{label=,labelwidth=1in,align=parleft,itemsep=0.1\baselineskip,leftmargin=!}
\newtheorem{thm}{Theorem}[section]
\newtheorem{lm}{Lemma}[section]
\newtheorem{prop}{Proposition}[section]
\newtheorem{rmk}{Remark}[section]
\newcommand{\prob}{\mathbb{P}}
\newcommand{\intZ}{\mathbb{Z}}
\newcommand{\realR}{\mathbb{R}}
\newcommand{\FGUE}{F_{GUE}}
\newcommand{\iid}{i.i.d.\,}
\newcommand{\dist}{{\rm{dist}}\,}
\newcommand{\mr}{\mathbf}
\renewcommand{\dd}{{\mathrm d}}
\newcommand{\ttt}{\mathfrak{t}}
\newcommand{\beq}{ \begin{equation} }
\newcommand{\eeq}{ \end{equation} }
\newcommand{\param}{\kappa_1}
\newcommand{\params}{\kappa_2}
\newcommand{\X}{y}
\newcommand{\ara}{Q}
\newcommand{\arao}{R}
\newcommand{\aras}{C_1}
\newcommand{\arat}{C_2}
\renewcommand{\ttt}{T}
\numberwithin{equation}{section} 
\numberwithin{thm}{section}
\author{Jinho Baik\footnote{Department of Mathematics, University of Michigan,
Ann Arbor, MI, 48109. Email: \texttt{baik@umich.edu}} 
 and Zhipeng Liu\footnote{Courant Institute of Mathematical Sciences, New York University, New York, NY 10012. Email: \texttt{zhipeng@cims.nyu.edu}}}
\date{\today}
\begin{document}
\title{TASEP on a ring in sub-relaxation time scale}
\maketitle

\begin{abstract}
Interacting particle systems in the KPZ universality class on a ring of size $L$ with $O(L)$ number of particles are expected to change from KPZ dynamics to equilibrium dynamics at the
so-called relaxation 
time scale $t=O(L^{3/2})$. 
In particular the system size is expected to have little effect to the particle fluctuations in the sub-relaxation time scale $t\ll L^{3/2}$. 
We prove that this is indeed the case for the totally asymmetric simple exclusion process (TASEP) with two types of initial conditions. 
For flat initial condition, we show that the particle fluctuations are given by the Airy$_1$ process as in the infinite TASEP with flat initial condition. 
On the other hand, the TASEP on a ring with step initial condition is equivalent to the periodic TASEP with a certain shock initial condition. 
We compute the fluctuations explicitly both away from and near the shocks for the infinite TASEP with same initial condition, and then show that the periodic TASEP has same fluctuations in the sub-relaxation time scale. 
\end{abstract}

\section{Introduction and results}


Consider the fluctuations of the particle locations of a one-dimensional interacting particle system in the KPZ class (such as asymmetric simple exclusion process) on a ring of size $L$ with $N=O(L)$ number of particles.   
The system behaves for a while like an infinite system, but eventually the dynamics will be influenced by the system size $L$. 
Since the spatial correlations of the infinite system are expected to be of order $t^{2/3}$, 
the fluctuations of all particles become correlated if $t^{2/3}=O(L)$. 
This time scale, $t=O(L^{3/2})$, is referred as the relaxation time scale   
\cite{Gwa-Spohn92, Derrida-Lebowitz98, LeeKim06, Brankov-Papoyan-Poghosyan-Priezzhev06, Gupta-Majumdar-Godreche-Barma07, Proeme-Blythe-Evans11}. 
Recently the one-point limit law in this relaxation time scale was obtained in \cite{Prolhac16, Baik-Liu16} for the totally asymmetric simple exclusion process (TASEP) on a ring. 
It was shown that the height fluctuations are still of order $t^{1/3}$ as in the KPZ scaling but the limiting distributions are something new, different from the Tracy-Widom distributions.
The goal of this paper is to study the effect of the system size in the sub-relaxation time scale $t\ll L^{3/2}$. 
We focus on the TASEP on a ring with flat and step initial conditions and the results will complement the papers \cite{Prolhac16, Baik-Liu16}.

The TASEP on a ring is equivalent to the \emph{periodic TASEP}, and we present our results in this model. 
In the periodic TASEP of period $L$ with $N$ particles, the particles are on the integers $\intZ$ and satisfy the periodicity $x_k(t)=x_{k+N}(t)+L$ for all $k\in\intZ$ and $t\ge 0$.
In the usual TASEP, the particles have independent clocks of exponential waiting time with parameter $1$. A particle moves to its right neighboring site if its clock rings and the right neighboring site is empty. 
For the periodic TASEP, the clocks corresponding to the particles at the sites of $L$ distance apart are identical. Hence if a particle at site $i$ moves, then the particles at sites $i+nL$, $n\in \intZ$, all move. 
Any $N$ consecutively-indexed particles in the periodic TASEP then describe the TASEP on a ring with the additional information of 
the winding numbers around the ring. 
An initial condition for the TASEP on a ring introduces a periodic initial condition for the periodic TASEP. 
The flat initial condition on a ring translates to the flat initial condition for the periodic TASEP: for a fixed positive integer $d\ge 2$, 
\begin{equation}
\label{eq:flat_ic000}
	x_j(0)=jd, \qquad j\in \intZ. 
\end{equation}
(We label the particles left to right, and they move from left to right.)
However, the step initial condition on a ring translates to the following initial condition for the periodic TASEP: 
\begin{equation}
\label{eq:step_ic00}
	x_{j+kN}(0)=-N+j+kL, \qquad j=1,2,\cdots, N, \qquad k\in \intZ.
\end{equation}
See the first picture in Figure~\ref{fig:densityprofile}. 
Note that this is a shock initial condition. 
For both initial conditions, 
we keep the average particle density $\rho$ as a constant and take $t\to\infty$ and $L=[\rho^{-1}N]\to\infty$ simultaneously.
We compare the periodic TASEP with  the usual TASEP on $\intZ$, which we call \emph{infinite TASEP}, with same initial condition. 
The main result is that as long as $t\ll L^{3/2}$,  the particle fluctuations of the periodic TASEP are same as those of the infinite TASEP.
Hence the system size has little effect in the  sub-relaxation time scale. 

\subsection{Flat initial condition}

For the infinite TASEP with flat initial condition, the particle fluctuations converge to the Airy$_1$ process $\mathcal{A}_1(u)$  \cite{Sasamoto05, Borodin-Ferrari-Prahofer-Sasamoto07}.
The marginals of $\mathcal{A}_1(u)$ are distributed as the GOE Tracy-Widom distribution \cite{Tracy-Widom96}. 
We show that the periodic TASEP with flat initial condition has the same fluctuations if $t\ll L^{3/2}$.

Let the average density $\rho$ of particles be a fixed constant satisfying 
\begin{equation}\label{eq:flatrhocoq}
	\rho\in\{d^{-1} \, ; \, d=2,3,4,\cdots\}.
\end{equation}
Define the flat initial condition as
\begin{equation}
\label{eq:flat_ic0}
	x_j(0)=j\rho^{-1}, \qquad j=1,2,\cdots, N
\end{equation}
for the TASEP on a ring (where the ring is identified with the set $\{1, 2, \cdots, L\}$ with $L=N\rho^{-1}$).
Then the corresponding periodic TASEP satisfies the flat initial condition, 
\begin{equation}
\label{eq:flat_ic}
	x_j(0)=j\rho^{-1}, \qquad j\in \intZ.
\end{equation}

\begin{thm}
	\label{thm:limiting_process_flat} (flat initial condition)
	Fix $\rho$ satisfying~\eqref{eq:flatrhocoq} and consider the periodic TASEP of period $L=\rho^{-1}N$ with $N$ particles.
	Assume the flat initial condition~\eqref{eq:flat_ic}.  Let $t=t_N$ be a sequence of times satisfying the following two conditions: (1) $t_N\le CN^{3/2-\epsilon}$ for fixed positive constants $C$ and $\epsilon$, and (2) $\lim_{N\to\infty} t_N=\infty$. 
	Then setting
\begin{equation}
\label{eq:def_kappa_sigma_flat}
\param:=2^{5/3}\rho^{4/3}(1-\rho)^{1/3}, \qquad \sigma_1:=2^{1/3}\rho^{-1/3}(1-\rho)^{2/3},
\end{equation}
	we have
	\begin{equation}
	\label{eq:limiting_process_flat}
	\frac{x_{[\param ut_N^{2/3}]}(t_N)-x_{[\param ut_N^{2/3}]}(0)-(1-\rho)t_N}{-\sigma_1t_N^{1/3}} \longrightarrow  \mathcal{A}_1(u)
	\end{equation}
	for $u\in\realR$ in the sense of convergence of finite dimensional distributions as $N\to\infty$.
\end{thm}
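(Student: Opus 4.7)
The plan is to derive a multi-point Fredholm determinant formula for the joint distribution of $(x_{k_1}(t),\ldots,x_{k_m}(t))$ in the periodic TASEP with flat initial condition, and to show that in the regime $t_N \le CN^{3/2-\epsilon}$ it converges to the standard Fredholm determinant for the Airy$_1$ extended kernel. I would first extend the one-point formula of~\cite{Baik-Liu16} to a multi-point version. For the flat initial condition~\eqref{eq:flat_ic}, the eigenstates of the periodic TASEP transfer matrix are indexed by Bethe roots: solutions of an algebraic equation of the form $z^L=R(z)$ with $R$ a rational function depending on $\rho=d^{-1}$. By analogy with the biorthogonal ensemble framework used for the infinite TASEP in~\cite{Borodin-Ferrari-Prahofer-Sasamoto07, Sasamoto05}, the multi-point distribution should take the form of a Fredholm determinant whose kernel is a double contour integral with the outer $z$-contour replaced by a discrete sum over these Bethe roots.

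Next I would perform a steep descent analysis. Under the scaling~\eqref{eq:def_kappa_sigma_flat} there is a single critical point $z_c$ of the relevant exponential lying on the unit circle, determined by $\rho$. After deforming to a steep descent contour through $z_c$, the integrand is concentrated in an arc of radius $\sim t^{-1/3}$, while the Bethe roots along the contour are spaced by $\sim L^{-1}$ in arc length. The ratio of the local scale to the spacing is thus $L/t^{1/3}$, which under the sub-relaxation hypothesis is bounded below by $N^{\epsilon/3}$ and hence diverges. The discrete sum over Bethe roots therefore converges to the corresponding integral, producing the infinite-TASEP kernel whose asymptotics (from~\cite{Borodin-Ferrari-Prahofer-Sasamoto07, Sasamoto05}) yield the Airy$_1$ extended kernel. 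Trace-norm convergence of the rescaled kernels on a reference contour then gives convergence of finite-dimensional distributions.

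The main obstacle is making the Riemann-sum-to-integral step rigorous with uniform control. Bethe roots near $z_c$ contribute essentially all of the Fredholm determinant, and here the discretisation argument is quantitative and mirrors the usual Airy$_1$ saddle point. However, one must also show that the Bethe roots far from $z_c$---where the integrand is exponentially small in $t^{1/3}$ but the Bethe curve deviates from the unit circle and requires separate contour analysis, especially near the antipode of $z_c$---contribute negligibly and do not generate losses that consume the $N^{\epsilon}$ safety margin. This is precisely where the strict sub-relaxation assumption $t\ll L^{3/2}$ enters: at the relaxation scale $t\sim L^{3/2}$ the Riemann-sum approximation genuinely fails, and the limit is one of the new distributions of~\cite{Prolhac16, Baik-Liu16} rather than GOE Tracy--Widom.
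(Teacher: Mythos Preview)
Your proposal takes a genuinely different route from the paper. The paper does \emph{not} work with Fredholm determinants or Bethe roots at all. Instead it argues by a probabilistic coupling through the directed last passage percolation (DLPP) picture: the periodic TASEP with flat initial condition is mapped to a periodic DLPP with a ``flat'' boundary path $\Lambda$, and the statement to be proved becomes $\prob_{\mathbf v}\big(\bigcap_{\mathbf q\in\mathbf S}\{H_\Lambda(\mathbf q)\le t_N\}\big)\to$ the Airy$_1$ finite-dimensional distribution. The paper then shows that the line-to-point last passage time $H_\Lambda(\mathbf q)$ is, with high probability, realised by a path from a corner $\tilde{\mathbf c}_i$ with $i$ in a window of size $N/2$ around the optimal index (Lemma~3.1), and that the maximal paths from such corners stay in a strip of vertical width $N/2+O(t_N^{2/3+\epsilon''})\ll N$ (Propositions~2.1 and~2.2 on transversal fluctuations). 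Inside that strip the periodic and the usual DLPP weights coincide, so $H_\Lambda(\mathbf q)=G_\Lambda(\mathbf q)$ with probability $\to 1$, and the known Airy$_1$ limit for the \emph{infinite} TASEP with flat initial condition~\cite{Sasamoto05,Borodin-Ferrari-Prahofer-Sasamoto07} finishes the proof. The sub-relaxation hypothesis enters only to guarantee that $t_N^{2/3}\ll N$, i.e.\ that the transversal fluctuation scale is much smaller than the period.

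By contrast, your analytic program starts from a multi-point Fredholm determinant for the periodic model and argues that in the sub-relaxation regime the discrete Bethe-root sum becomes a Riemann integral. This is a coherent heuristic and morally correct---the spacing-versus-local-scale calculation you sketch is the right one---but it is a much heavier route. The first step, ``extend the one-point formula of~\cite{Baik-Liu16} to a multi-point version,'' is not a routine computation: the multi-time/multi-space formula for periodic TASEP is itself a substantial result (obtained in later work of the same authors) and is not available in the references you cite. You would then still need the uniform tail control on far-away Bethe roots that you flag as the main obstacle. The paper's approach buys you all of this for free: once path localisation is established, the asymptotic analysis is entirely outsourced to the existing infinite-TASEP literature, and no new exact formula or saddle-point analysis is needed. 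Your approach, if carried out, would have the advantage of being self-contained and potentially giving finer information (e.g.\ rates), but as written the proposal has a real gap at the very first step.
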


Compare the above result with the infinite TASEP with the same initial condition~\eqref{eq:flat_ic}.
In this case, the result~\eqref{eq:limiting_process_flat} holds as $t\to\infty$ in an arbitrary way \cite{Sasamoto05, Borodin-Ferrari-Prahofer-Sasamoto07}.\footnote{\label{ft:flat_TASEP}The paper \cite{Borodin-Ferrari-Prahofer-Sasamoto07} only states the result for $\rho=1/2$, but the more general case $\rho=1/d$, $d=2,3,4,\cdots$, is similar. See \cite{Borodin-Ferrari-Prahofer07} for a discrete version. We note that the more general case $\rho=p/q$ for integer $p$ and $q$, however,  is still open for both infinite TASEP and TASEP on a ring.} 
The sub-relaxation condition (1) $t_N\le CN^{3/2-\epsilon}$ is not needed. 
(The infinite TASEP does not even depend $N$ and $L$ individually; it  depends only on the ratio $\rho=N/L$.)
On the other hand,  the one-point distribution of the periodic TASEP is not the GOE Tracy-Widom distribution when $t=O(L^{3/2})$ \cite{Prolhac16, Baik-Liu16}, and when $t\gg L^{3/2}$, we expect Gaussian fluctuations. 
Hence for the flat initial condition, the periodic TASEP has same fluctuations as the infinite TASEP only in the sub-relaxation time scale $t\ll L^{3/2}$.

\subsection{Periodic step initial condition} 



Define the step initial condition for the TASEP on a ring (where the ring is identified with $\{-N+1, -N+2, \cdots, -N+L\}$) as 
\begin{equation}
\label{eq:step_ic0}
	x_j(0)=-N+j, \qquad j=1,2,\cdots, N. 
\end{equation}
Then the corresponding periodic TASEP satisfies 
\begin{equation}
\label{eq:step_ic}
	x_{j+kN}(0)=-N+j+kL, \qquad j=1,2,\cdots, N, \qquad k\in \intZ.
\end{equation}
We call this \emph{periodic step initial condition.}
See the first picture in Figure~\ref{fig:densityprofile}.
Fix $\rho\in (0,1)$. We assume that $L=[\rho^{-1} N]$. Hence $\rho$ is the average density of particles. 
For simplicity we assume that $0<\rho\le 1/2$. 
See Remark  \ref{rmk:rhob2} for the case when $1/2< \rho<1$.

\begin{figure}
	\centering
	\begin{minipage}{.4\textwidth}
		\includegraphics[scale=0.35]{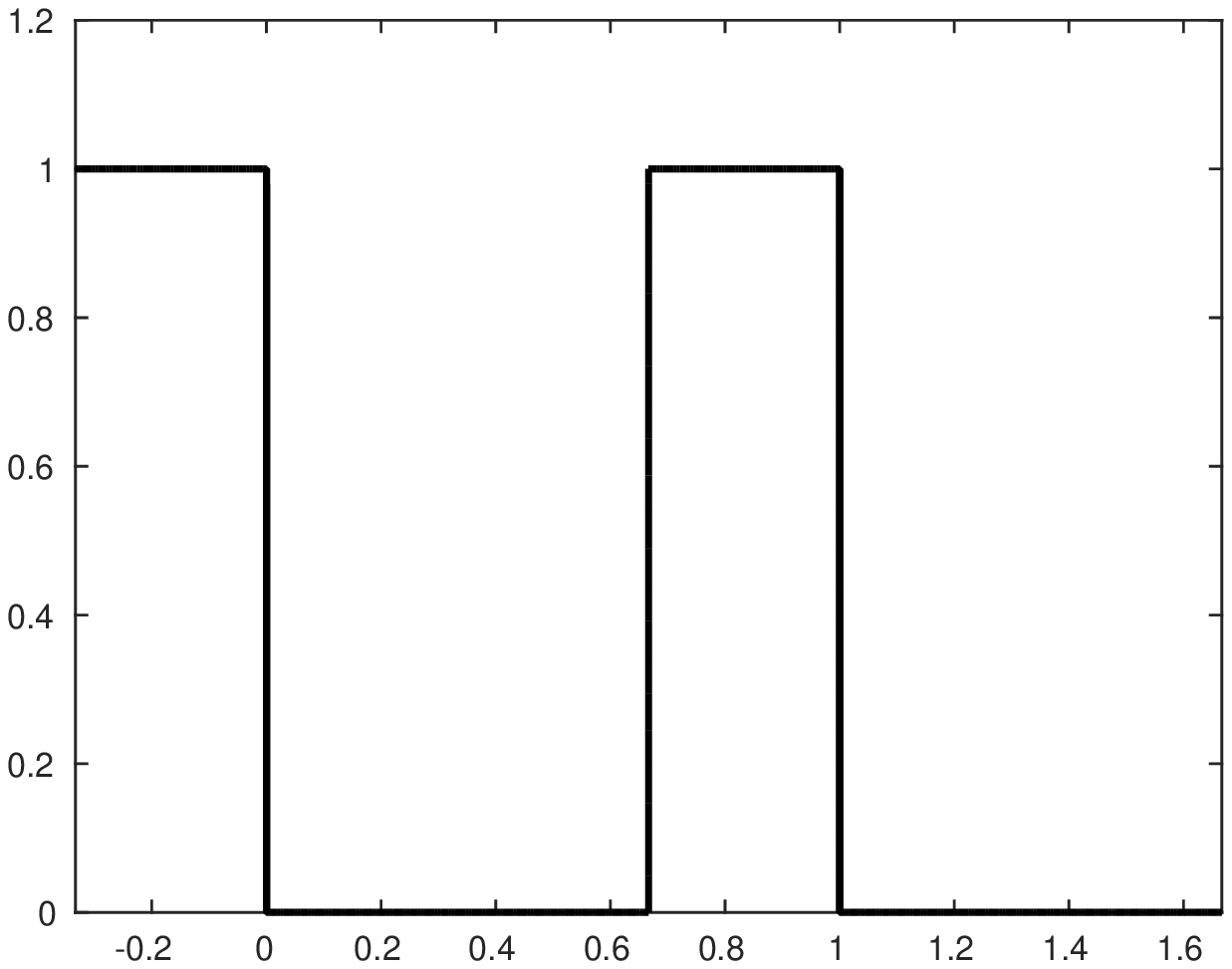}
	\end{minipage}
	\begin{minipage}{.4\textwidth}
		\includegraphics[scale=0.35]{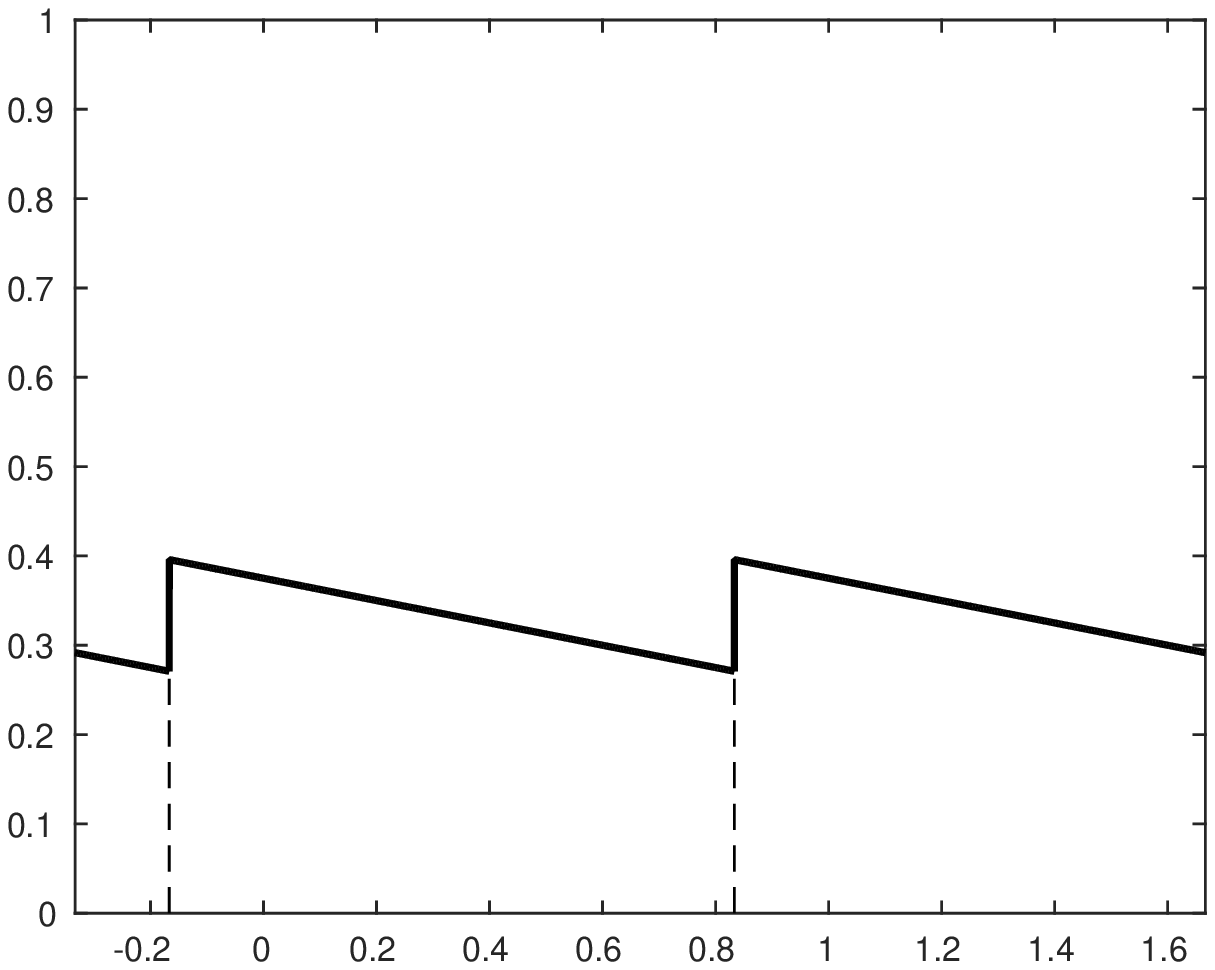}
	\end{minipage}\\
	%
	\begin{minipage}{.4\textwidth}
		\includegraphics[scale=0.35]{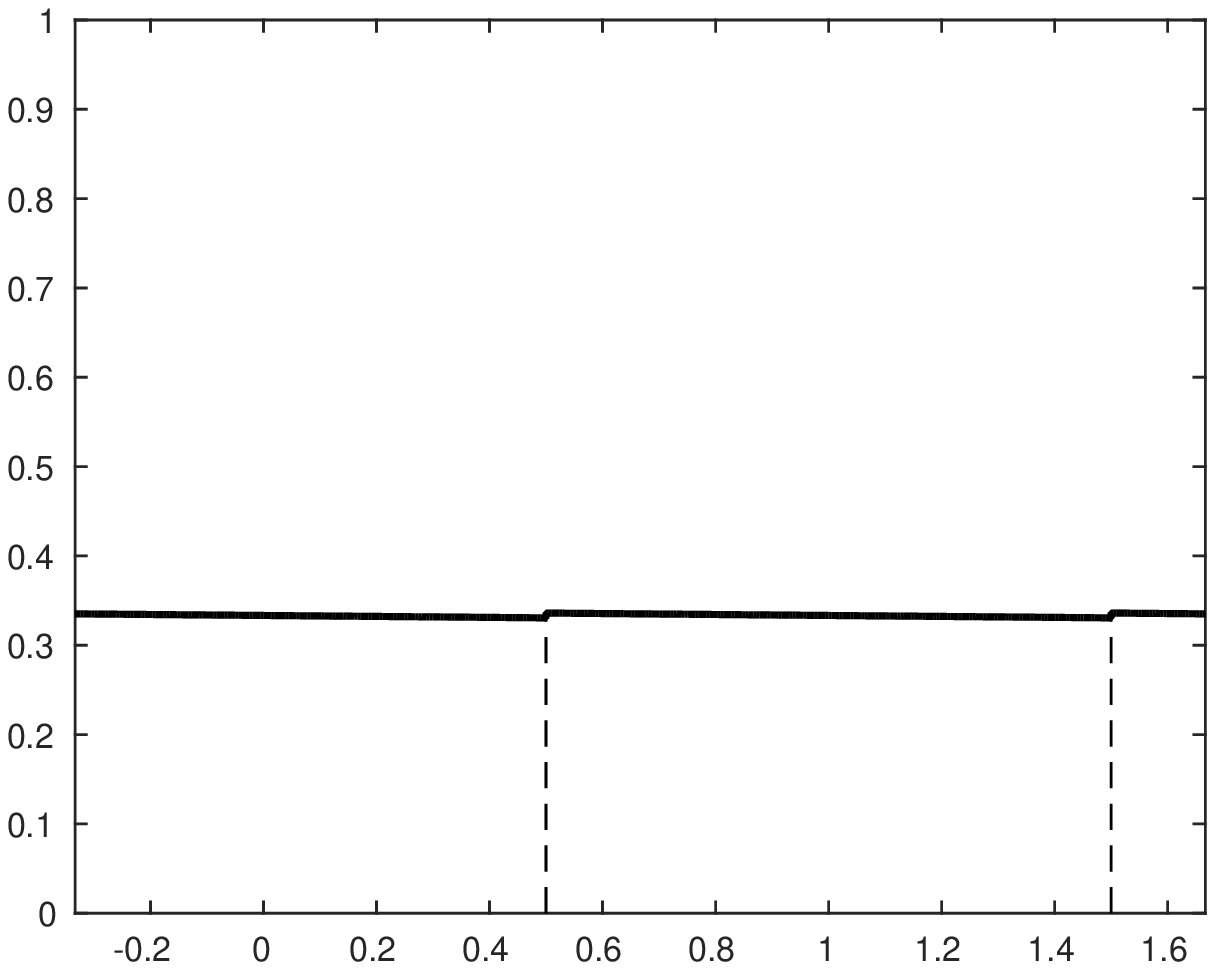}
	\end{minipage}
	\begin{minipage}{.4\textwidth}
		\includegraphics[scale=0.35]{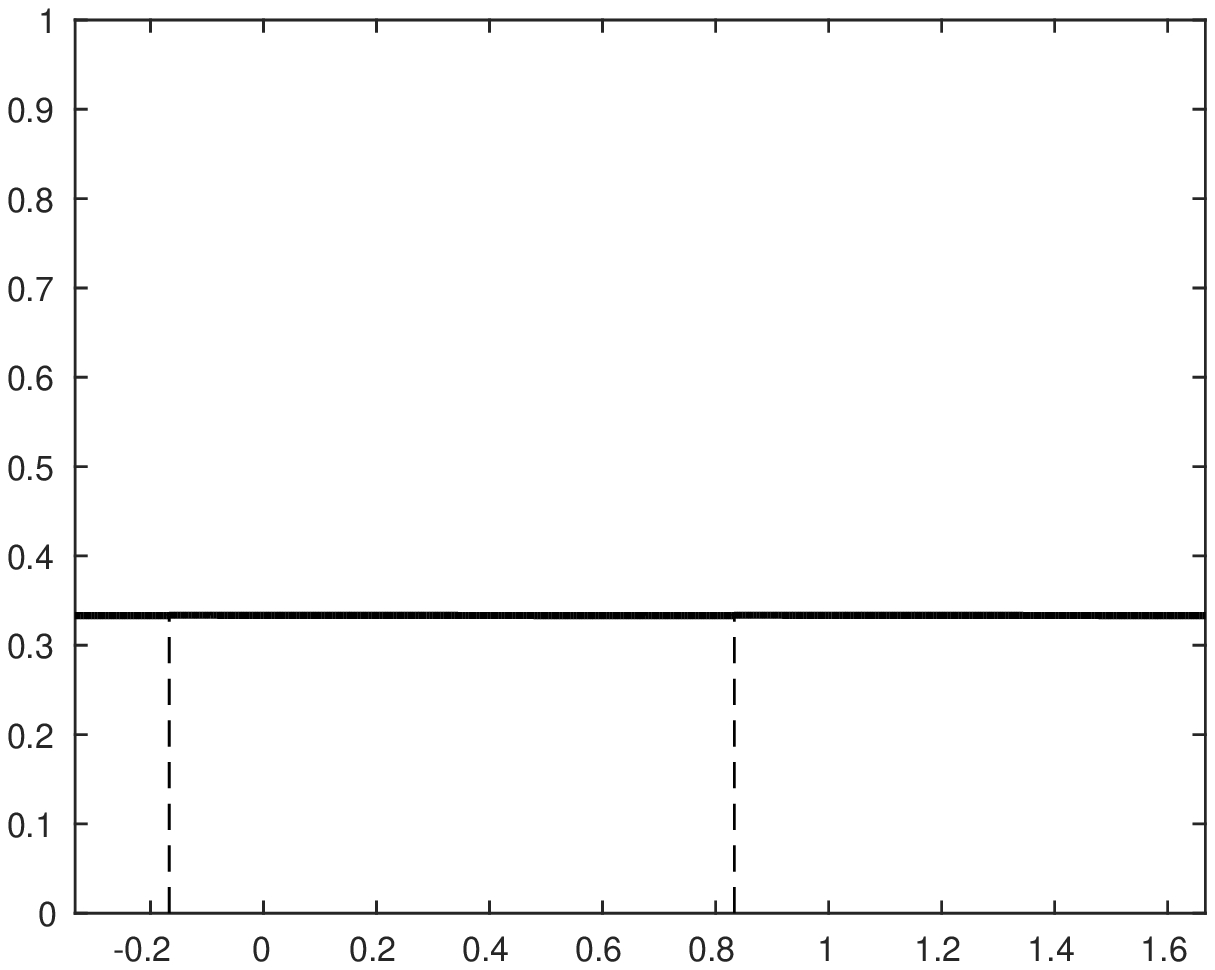}
	\end{minipage}
	\caption{The evolution of density profile for periodic step initial condition when $\rho=1/3$. 
	The four figures correspond to the cases when $t/L=0$, $t/L=4$, $O(1)\ll t/L\ll L^{1/2}$ (with the choice of   $t/L=93$ and $L=10^6$), 
		and $t=O(L^{3/2})$ (with the choice of $t=L^{3/2}$ and $L=10^6$). 
		}
	\label{fig:densityprofile}
\end{figure}


For comparison, we consider the infinite TASEP with same periodic step initial condition. 
Note that in this case the system depends on $L$ and $\rho$, unlike the flat initial condition which depends only on $\rho$. 
The interesting feature about the periodic step initial condition is that it generates (infinitely many, spatially periodic) shocks. 
For the flat initial condition, the limit theorem for the infinite TASEP was previously established, but for the periodic step initial condition, we first need to establish the limit theorem for infinite TASEP. 
We compute the particle fluctuations explicitly when $t\ll L^{3/2}$ both away from the shocks and near the shocks, and then show that the periodic TASEP have the same fluctuations.

We state our result for the particles whose labels are close to $[\alpha N]$, where $\alpha\in\realR$ is a fixed constant.
By periodicity of the initial condition, we may assume $0< \alpha\le 1$.
The shocks, once generated, travel with average speed  $1-2\rho$ (see~\eqref{eq:shockloca} below). 
On the other hand,  the speed of a given particle changes with time and asymptotically   becomes $1-\rho$. 
Since the shocks and the particle have different speeds,  a particle meets with a shock once every $O(N)$ time interval. 
By solving the Burger's equation, one can check that (see Appendix~\ref{sec:appendix} below) the $[\alpha N]$-th particle meets with a shock at times $s_1N,s_2N,\cdots$ on average, 
where $s_j=s_j(\alpha)$ are defined by 
\begin{equation}
\label{eq:aux_019}
s_{j}=s_j(\alpha):=
\frac{(\sqrt{j-\alpha+1}+\sqrt{j-\alpha})^2}{4\rho^2}, \qquad j\ge 1, 
\end{equation}
for $0<\alpha\le 1$.
Now define the sequence of (scaled) shock intervals for particle $[\alpha N]$: 
\begin{equation}
\mathcal{S}_{j}=(s_{j},s_{{j+1}}), \qquad j=0,1,2,\cdots, 
\end{equation}
where we set 
\begin{equation}
\label{eq:aux_019005}
	s_{0}=s_0(\alpha):=1-\alpha. 
\end{equation}
Here $s_0N$ is not a shock time for particle $[\alpha N]$. Instead it is the average time the particle moves for the first ime; due to the initial condition, it takes certain amount of time for a given particle to be able to move. 

We introduce the notation $I^{(\epsilon)}=\{x: x\in I, \dist(x,\partial I) \ge \epsilon\}$ for interval $I$ and nonnegative number $\epsilon$. 
Note that if $I=(a,b)$, then $I^{(\epsilon)}=[a+\epsilon,b-\epsilon]$ for $0<\epsilon<(b-a)/2$, and $I^{(\epsilon)}=\emptyset$ for $\epsilon>(b-a)/2$. 
We denote by $\mathcal{A}_2(u)$ the Airy$_2$ process \cite{Prahofer-Spohn02} whose marginals have the GUE Tracy-Widom distribution \cite{Tracy-Widom94}.

As in the flat case, we state the results for a sequence of times $t_N$. 
We introduce another sequence of parameters $j_N$. 
The parameter $j_N$ measures the number of encounters with shocks by time $t_N$. 
Since we assume sub-relaxation time scale $t_N\le O(N^{3/2-\epsilon})$, 
the parameter $j_N$ satisfies $j_N\le O(N^{1/2-\epsilon})$. 
We state the result for two cases separately; when the particles are away from shocks and when they are near a shock. 
These two cases correspond to the cases when the rescaled time parameter $\ttt_N= \frac{t_N}{N}$ 
satisfies $\ttt_N\in \mathcal{S}_{j_N}^{(\epsilon')}$ and when $\ttt_N= s_{j_N}$, respectively. 
Note that $j_N/\ttt_N=O(1)$. 


\subsubsection{Away from shocks}

We state the first theorem only for $0<\alpha<1$. The case $\alpha=1$ is discussed in Remark~\ref{rmk:alphaze}.

\begin{thm} (Periodic step initial condition 1. Away from shocks)
	\label{thm:limiting_process_step}
\begin{enumerate}[(i)]
\item
	Let $\rho$ and $\alpha$ be fixed constants satisfying $0<\rho\le 1/2$ and $0< \alpha< 1$. 
	Consider  the infinite TASEP with the periodic step initial condition~\eqref{eq:step_ic} with $L=[\rho^{-1} N]$.
		For fixed constants $\epsilon>0$ and $C>0$, 
		let $j_N$ be  an integer sequence satisfying $0\le j_N<CN^{1/2-\epsilon}$ for all $N$.
		Fix $\epsilon'>0$ such that 
		$\mathcal{S}_{j_N}^{(\epsilon')}\ne \emptyset$ for all $N$. 
		Set
		\begin{equation}
		\label{eq:aux_004005}
		\mu=\mu(N) 
		:=\sqrt{\frac{j_N-\alpha+1}{\ttt_N}}
		\end{equation}
		and
		\begin{equation}
		\label{eq:params}
		\params=\params(N):= 2\mu^{4/3}(1-\mu)^{1/3},\qquad \sigma_2=\sigma_2(N):=\mu^{-1/3}(1-\mu)^{2/3}.
		\end{equation}
		Defined the scaled particle location as
		\begin{equation}
		\label{eq:rescaled_process_step}
			\X_N(u):= x_{\left[\alpha N+\params ut_N^{2/3}\right]}(t_N)-(1-2\mu)t_N-\rho^{-1}j_NN.
		\end{equation}
		Then for every time sequence $t_N=\ttt_N N$ satisfying $\ttt_N\in \mathcal{S}_{j_N}^{(\epsilon')}$, we have
		\begin{equation}
		\label{eq:limit_step_001}
		\frac{\X_N(u)-2\mu^{1/3}(1-\mu)^{1/3}ut_N^{2/3}}{-\sigma_2t_N^{1/3}}\longrightarrow \mathcal{A}_2(u)-u^2		
		\end{equation}
		for $u\in\realR$ in the sense of convergence of finite dimensional distribution as $N\to \infty$.
\item The same result holds for the periodic TASEP of period $L=[\rho^{-1}N]$ with $N$ particles.
\end{enumerate}
\end{thm}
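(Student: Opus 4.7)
The plan is to prove part (i) by identifying the effective single-block step initial condition that governs the $[\alpha N]$-th particle at time $t_N$, and then invoking the known Airy$_2$ process limit for step initial condition TASEP. Part (ii) follows from part (i) by showing that in the sub-relaxation regime $t_N \ll N^{3/2}$, the periodic boundary has negligible effect on the particle fluctuations.

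For part (i), I would use a variational/LPP representation of TASEP to decompose $x_{[\alpha N]}(t_N)$ into contributions from each packed block of the periodic step initial condition~\eqref{eq:step_ic}. For each $k\in \intZ$, let $X^{(k)}_m(t)$ denote the position of particle $m$ in the auxiliary TASEP initiated only from the $k$-th block (particles packed at $\{-N+1+kL,\ldots,kL\}$). The basic coupling/attractiveness of TASEP yields a variational expression of $x_m(t_N)$ as an extremum over $k$ of the $X^{(k)}_m(t_N)$. The law of large numbers for $X^{(k)}_{[\alpha N]}(t_N)$ is obtained from the characteristic analysis of Burgers' equation as in Appendix~\ref{sec:appendix}: block $k$ contributes a centered value $\rho^{-1} k N + (1-2\mu_k)t_N + o(t_N)$ for an appropriate local density $\mu_k$. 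In the regime $\ttt_N \in \mathcal{S}_{j_N}^{(\epsilon')}$, the extremum is uniquely attained by $k=-j_N$, with a gap of order $\Omega(N)$ to the runners-up $k = -j_N \pm 1$, and $\mu_{-j_N}$ matches $\mu$ in~\eqref{eq:aux_004005}.

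Next I would apply the Airy$_2$ process limit for step-IC TASEP (see, e.g.,~\cite{Borodin-Ferrari-Prahofer-Sasamoto07} and references therein) to the dominant block. With local density $\mu$ and the rescaling dictated by $\params$ and $\sigma_2$ in~\eqref{eq:params}, this gives the Airy$_2$ fluctuations; the parabolic term $-u^2$ in~\eqref{eq:limit_step_001} arises from the second-order Taylor expansion of the LLN in the label parameter $u$. Since the fluctuations of the non-dominant blocks are only of order $t_N^{1/3}$, which is much smaller than the $\Omega(N)$ LLN gap, with overwhelming probability the variational extremum is realized by the $(-j_N)$-th block alone, and this yields~\eqref{eq:limit_step_001}. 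For part (ii), in the sub-relaxation regime the spatial correlation length of TASEP is $O(t_N^{2/3}) = O(N^{1-2\epsilon/3})$, which is much smaller than the period $L = \Theta(N)$. One can then either couple the periodic TASEP to the infinite TASEP on a sliding space-time window of size $O(t_N^{2/3})$, exploiting the effectively finite propagation speed of information on the relevant scale, or compare the Fredholm determinant formula for the periodic TASEP from~\cite{Baik-Liu16} with the corresponding formula for the infinite TASEP and show that the two kernels converge to the same limit as $t_N/L^{3/2} \to 0$.

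The main obstacle is the control of the variational formula in part (i): rigorously bounding the infinite collection of block contributions and proving uniform concentration requires uniform tail estimates on step-IC TASEP fluctuations over an unbounded range of density parameters. A further delicate point, for part (ii), is quantifying the discrepancy between periodic and infinite dynamics; the coupling-based route demands a sharp probabilistic estimate on how a perturbation propagates around the ring of size $L$, while the formula-based route requires a careful steepest-descent analysis of the Bethe-ansatz kernel in the sub-relaxation limit.
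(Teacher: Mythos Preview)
Your strategy for part (i) is essentially the same as the paper's, only phrased in TASEP language rather than in the directed last passage percolation (DLPP) picture: the paper translates to DLPP, writes $G_\Lambda(\mathbf q)=\max_i G_{\mathbf c_i}(\mathbf q)$ over the corners $\mathbf c_i$ of the boundary path, shows the maximum is attained at $i=j_N$ (Lemma~\ref{lm:corners_trivial}), and then quotes the Airy$_2$ limit for point-to-point DLPP. Your variational formula over blocks is the TASEP shadow of exactly this.

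However, there is a genuine quantitative error in your argument that is not cosmetic. You assert that the law-of-large-numbers gap between the dominant block $k=-j_N$ and its neighbors is $\Omega(N)$. This is only true when $t_N=O(N)$, i.e.\ when $j_N$ stays bounded. In general the gap between $d(\mathbf q-\mathbf c_{j_N})$ and $d(\mathbf q-\mathbf c_{j_N\pm1})$ is of order $\epsilon' N^2/t_N$ (see the computation around~\eqref{eq:aux_2016_08_09_02}, where the left side is bounded below by $c\epsilon'\ttt_N^{-1}$, and everything is in units of $N$). When $t_N$ is near the top of the allowed range $N^{3/2-\epsilon}$, this gap is only $\Omega(N^{1/2+\epsilon})$, and the block fluctuations are $O(t_N^{1/3})=O(N^{1/2-\epsilon/3})$. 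The comparison still works, but only barely, and \emph{this is precisely where the sub-relaxation hypothesis $t_N\ll N^{3/2}$ enters}: at $t_N\sim N^{3/2}$ the gap and the fluctuations are both $\sim N^{1/2}$ and the argument collapses. Your write-up hides this, and a referee would flag it. Relatedly, the reduction from infinitely many blocks to the two neighbors needs an argument; the paper does this via a path-crossing monotonicity (see~\eqref{eq:aux_040}) rather than uniform tail bounds over all densities.

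For part (ii), neither of your two routes is what the paper does, and both are harder than necessary. The paper stays in the DLPP picture and proves a transversal-fluctuation estimate (Propositions~\ref{prop:tail_transversal} and~\ref{prop:tail_transversal2}): the geodesic from $\mathbf c_{j_N}$ to $\mathbf q$ lies in a strip of width $O(t_N^{2/3+\delta})\ll N$ around the straight segment, with probability $1-e^{-c(\log t_N)^{1+\delta'}}$. Inside such a strip the periodic and i.i.d.\ weights coincide, so $H_{\mathbf c_{j_N}}(\mathbf q)=G_{\mathbf c_{j_N}}(\mathbf q)$ on that event (Proposition~\ref{prop:comparison_DLPP_periodic_DLPP}). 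This is both simpler and sharper than a generic coupling or a steepest-descent comparison of Bethe-ansatz kernels; in particular it makes transparent that the sub-relaxation condition is again exactly what forces the strip width $t_N^{2/3}$ to be $\ll L$.
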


As mentioned before, due to the periodic step initial condition, the $[\alpha N]$-th particle only starts to move after $s_0N$ time on average. 
This is the reason that we restrict that $t_N\ge s_0N$: see the condition that $j_N\ge 0$. 
For the next theorem about the fluctuations near a shock, we assume that $j_N\ge 1$ so that $t_N\ge s_1 N$. This is because $s_1N$ is the average of the first time for the $[\alpha N]$-th particle to meet a shock. 

\begin{rmk}\label{rmk:alphaze}
We now discuss the case of when $\alpha=1$. 
This case is concerned with the particles labeled $N+O(1)$. 
At time $0$, the particle labeled $N$ has the particle $N-1$ to its immediate left but the particle $N+1$ is in $O(N)$ distance to its right. 
The distance between particle $N$ and particle $N+1$ becomes $O(1)$ at around time $s_1(1)N$, the first shock time for $\alpha=1$. 
We can show that if we add the assumption that $j_N\ge 1$ for all large enough $N$, then the theorem holds for $\alpha=1$. 
\end{rmk}

\subsubsection{Near shocks}

Before we discuss the next theorem, let us consider the density profile. 
It is same for both infinite TASEP and periodic TASEP. 
A computation of the Burger's equation shows  that the density profile  at a shock has jump discontinuity of order $O(Lt^{-1})$. See Appendix~\ref{sec:appendix} for the computation and Figure~\ref{fig:densityprofile} for an illustration.
When $t\le O(L)$, the discontinuity is order $O(1)$. 
On the other hand, when $t\gg L$, the discontinuity converges to zero and the density profile is asymptotically continuous. 
Indeed the density profile converges to the constant function with value $\rho$. 
However, in the sub-relaxation time scale $t\le O(L^{3/2-\epsilon})$, the discontinuity is at least $O(L^{\epsilon-1/2})$. 
The next theorem shows that as long as the discontinuity is at least this much, the fluctuations near a shock are same as the $O(1)$ discontinuity, and, furthermore, the periodic TASEP has the same fluctuations as the infinite TASEP. 



The part (c) of the next theorem is about the fluctuations at a shock. 
Recently, Ferrari and Nejjar \cite{Ferrari-Nejjar15} studied them for infinite TASEP. They obtained a simple general result from which they computed the fluctuations for a few examples of deterministic initial conditions. Especially, when the density profile has a discontinuity of order $O(1)$ at the shock and both sides of the shock have Airy$_2$ fluctuations, the particle fluctuations at the shock are distributed as the maximum of two independent GUE Tracy-Widom random variables with possibly different variances. 
The part (i) (c) of the next theorem shows that the infinite TASEP with periodic step initial condition has the same property. 
However, there are two main differences from \cite{Ferrari-Nejjar15}. 
The first is that there are infinitely many shocks here whereas there is only one shock in \cite{Ferrari-Nejjar15}. 
The other is that the discontinuity can be as small as $O(L^{\epsilon-1/2})$. 
These differences make the analysis more complicated. 

\begin{thm} (Periodic step initial condition 2. Near a shock)
	\label{thm:limiting_process_step_shock}
\begin{enumerate}[(i)]
\item
	Consider the infinite TASEP as in the previous theorem with $0< \alpha\le 1$. For fixed constants $\epsilon>0$ and $C>0$, 
		let $j_N$ be  a  positive integer sequence satisfying $1\le  j_N<CN^{1/2-\epsilon}$ for all $N$. 
		Then for the shock time sequence $t_N=s_{j_N} N$, we have the following result.
		Set
		\begin{equation}
		\label{eq:aux_004005shock}
		\mu=\mu(N) 
		:=\sqrt{\frac{j_N-\alpha+1}{s_{j_N}}}.
		\end{equation}
		(This is same as~\eqref{eq:aux_004005} with $\ttt_N$ replaced by $s_{j_N}$). 
		Let $\params$, $\sigma_2$ be given by~\eqref{eq:params} and $\X_N(u)$ by~\eqref{eq:rescaled_process_step} with the new definition of $\mu$.
	\begin{enumerate}[(a)]
	\item (Particles in the higher density profile; $u>0$) We have 
		\begin{equation}
		\label{eq:limit_step_001shock}
		\frac{\X_N(u)-2\mu^{1/3}(1-\mu)^{1/3}ut_N^{2/3}}{-\sigma_2t_N^{1/3}}\longrightarrow \mathcal{A}_2(u)-u^2
		\end{equation}
		for $u>0$ in the sense of convergence of finite dimensional distribution as $N\to \infty$.
		
	\item (Particles in the lower density profile; $u<0$)
		We have 
		\begin{equation}		
		\label{eq:limit_step_001shock2}
		\frac{\X_N(\ara u)-2\tilde\mu^{1/3}(1-\tilde\mu)^{1/3}ut_N^{2/3}}{-\arao\sigma_2 t_N^{1/3}}\longrightarrow  \mathcal{A}_2(u)-u^2
		\end{equation}
		for $u<0$ in the sense of convergence of finite dimensional distribution as $N\to\infty$, where
		\beq
		\label{eq:aux_2016_08_24_03}
			\tilde \mu =\tilde \mu(N) :=\sqrt{\frac{j_N-\alpha}{s_{j_N}}}, 
			\qquad 
			\ara=\ara(N) := \frac{\tilde\mu^{4/3}(1-\tilde\mu)^{1/3}}{\mu^{4/3}(1-\mu)^{1/3}},
			\qquad
			\arao=\arao(N):=\frac{\tilde{\mu}^{-1/3}(1-\tilde\mu)^{2/3}}{\mu^{-1/3}(1-\mu)^{2/3}}.
		\eeq
	\item (Particle at the discontinuity point; $u=0$) If we further assume $\displaystyle \lim_{N\to\infty}j_N$ exists in $[1, \infty]$, then 
		\begin{equation}
		\label{eq:limit_step_at_shock}
		\frac{\X_N(0)}{-\sigma_2t_N^{1/3}}\Rightarrow \max\{\chi_{2}^{(1)}, r \chi_{2}^{(2)} \}
		\end{equation}
		in distribution as $N\to\infty$, where 
		$\chi_2^{(k)}$, $k=1,2$, are two independent GUE Tracy-Widom random variables. 
		The parameter $r=\lim_{N\to\infty}\arao(N)$ is given by 
		\begin{equation}
		r= \left(\frac{j_\infty-\alpha}{j_\infty-\alpha+1}\right)^{-1/6}\left(\frac{\sqrt{s_{j_\infty}}-\sqrt{{j_\infty}-\alpha}}{\sqrt{s_{j_\infty}}-\sqrt{{j_\infty}-\alpha+1}}\right)^{2/3}
		\end{equation}
		if  $j_\infty:=\displaystyle \lim_{N\to\infty}j_N$ is finite, and $r=1$ if $\displaystyle \lim_{N\to\infty}j_N=\infty$. 
	\end{enumerate}
\item The same result holds for the periodic TASEP. 
\end{enumerate}
\end{thm}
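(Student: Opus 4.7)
The plan is to first establish part (i) for the infinite TASEP and then deduce part (ii) for the periodic TASEP by comparing exact formulas in the sub-relaxation regime. I would begin by exploiting the structure of the periodic step initial condition: it decomposes into a union of ``blocks'', where block $k$ consists of the $N$ consecutive particles initially at positions $-N+1+kL,\ldots,kL$. Via the duality with last passage percolation with exponential weights (equivalently, the Borodin--Ferrari--Sasamoto determinantal formula), the position $x_{[\alpha N]}(t_N)$ can be cast as a variational expression in which only the ``front'' of one or two adjacent blocks contributes at leading order.

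A saddle-point analysis at the shock time $t_N = s_{j_N}N$ identifies blocks $j_N$ and $j_N+1$ as the critical ones. For part (a), block $j_N$ dominates when $u>0$ (the higher density side), and after translating the label $[\alpha N]$ into the effective label inside this block I would reduce the problem to the classical Airy$_2$ limit for infinite TASEP with step data, with scalings $(\kappa_2,\sigma_2)$ built from $\mu$. For part (b), block $j_N+1$ dominates when $u<0$, its effective density being governed by $\tilde\mu$ rather than $\mu$, which is precisely the source of the rescaling factors $Q$ and $R$ in \eqref{eq:aux_2016_08_24_03}; the same single-block Airy$_2$ convergence then applies.

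For part (c), where $u=0$ and both blocks contribute equally, I would use the Ferrari--Nejjar-type identity that represents $x_{[\alpha N]}(t_N)$ (in the basic coupling) as the minimum of the positions driven separately by blocks $j_N$ and $j_N+1$. The main step is to prove that the joint law of the two rescaled positions factorizes in the limit, so that the minimum-of-positions identity becomes, after the sign flip built into the scaling \eqref{eq:rescaled_process_step}, the $\max\{\chi_2^{(1)},r\chi_2^{(2)}\}$ distribution. Asymptotic independence should follow from a kernel-decoupling estimate: the two dominant saddles are macroscopically separated, and the Fredholm determinant factors into a product of two Airy kernels up to vanishing error, even when the shock discontinuity shrinks at rate $O(L^{\epsilon-1/2})$.

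The main obstacle is twofold. First, I must show uniformly in $N$ that the contributions of all blocks $k \ne j_N, j_N+1$ are negligible, which is delicate since in the sub-relaxation regime the competing saddles from adjacent blocks are separated only by $O(L^{\epsilon-1/2})$; I would control this via quantitative exponential bounds on the Fredholm expansion restricted to non-dominant blocks, using moderate-deviation estimates for the one-point TASEP law. Second, once part (i) is proven, transferring to the periodic TASEP in part (ii) requires comparing the Fredholm determinant formula for the periodic model from \cite{Baik-Liu16} with the infinite-TASEP kernel; the discrepancy should carry a factor that is exponentially small when $t_N \le CN^{3/2-\epsilon}$, so the limits in (a), (b), (c) all persist for the periodic model.
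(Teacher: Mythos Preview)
Your high-level strategy is sound and matches the paper's: identify the two dominant corners at the shock time, show all others are negligible, prove asymptotic independence for part (c), and transfer everything to the periodic model. The methods you propose, however, are genuinely different from what the paper does.

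The paper works entirely through the directed last passage percolation picture with geometric, probabilistic arguments and never touches a Fredholm determinant. Its central tool is a transversal fluctuation estimate (Propositions~\ref{prop:tail_transversal} and~\ref{prop:tail_transversal2}): the maximal path in either DLPP or periodic DLPP stays within distance $y|\mathbf{q}|^{2/3}$ of the diagonal with probability at least $1-e^{-cy^2}$. This single estimate does double duty. First, it gives the periodic-versus-infinite comparison (Proposition~\ref{prop:comparison_DLPP_periodic_DLPP}) by showing that in sub-relaxation scale the maximal path never feels the periodicity, so the two models can be coupled to agree with high probability; no kernel comparison with the \cite{Baik-Liu16} formula is needed. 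Second, it gives the asymptotic independence in part (c): the maximal paths from $\mathbf{c}_{j_N}$ and $\mathbf{c}_{j_N-1}$ to $\mathbf{q}$ are confined to strips that, after trimming short segments near the endpoints, are disjoint (even modulo the period $\mathbf{v}$), and then the Ben Arous--Corwin sandwiching lemma (Lemma~\ref{lm:Ben_Arous-Corwin}) converts this into the $\max\{\chi_2^{(1)},r\chi_2^{(2)}\}$ limit directly.

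Your kernel-decoupling and Fredholm-comparison route is plausible in principle, but heavier: the \cite{Baik-Liu16} formula for the periodic model is intricate, and extracting an exponentially small sub-relaxation discrepancy from it would be a substantial undertaking (indeed the paper is designed to bypass exactly that). Likewise, factoring the two-saddle kernel into a product of Airy kernels when the shock discontinuity is only $O(L^{\epsilon-1/2})$ is delicate, whereas the strip argument handles this uniformly. The paper's transversal-fluctuation approach is both simpler and treats the two obstacles you identify with the same tool. One minor indexing point: the paper's two relevant corners are $\mathbf{c}_{j_N}$ and $\mathbf{c}_{j_N-1}$, not $j_N$ and $j_N+1$.
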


The Airy$_2$ processes appearing in parts (a) and (b) are independent. 
The two cases (a) and (b) can be formally expressed as a uniform formula which holds for all $u\in\realR$ as follows.
Let $\mathcal{A}_2(u)$ and $ \tilde{\mathcal{A}}_2(u)$ be two independent Airy$_2$ processes, and set 
$\mathcal{A}^{(1)}(u)= \mathcal{A}_2(u)-u^2$ and $\mathcal{A}^{(2)}= \tilde{\mathcal{A}}_2(u)-u^2$. 
The result~\eqref{eq:limit_step_001shock} for case (a) can be written formally as
\beq \label{eq:tempor0}
	\frac{\X_N(u)}{- \sigma_2 t_N^{1/3}}\approx \aras ut_N^{1/3} + \mathcal{A}^{(1)}(u), 
	\qquad u>0,
\eeq
where $\aras:=-2\mu^{2/3}(1-\mu)^{-1/3}$.
On the other hand, if we replace $u$ by $u/\ara$ in case (b),~\eqref{eq:limit_step_001shock2} can be written formally as
\beq \label{eq:tempor1}
	\frac{\X_N(u)}{-\sigma_2 t_N^{1/3}}\approx \arat ut_N^{1/3}+\arao \mathcal{A}^{(2)}(\ara^{-1} u), \qquad u<0, 
\eeq
where $\arat:=-2\tilde{\mu}^{-1}\mu^{5/2}(1-\mu)^{-1/3}$. Note that $\arat=\frac{\mu}{\tilde{\mu}}\aras<\aras<0$. Theorem~\ref{thm:limiting_process_step_shock} (a) and (b) can thus formally be written as 
\begin{equation}
\label{eq:tempor11}
	\frac{\X_N(u)}{-\sigma_2 t_N^{1/3}}\approx 
	\max	\left\{\aras ut_N^{1/3}+	\mathcal{A}^{(1)}(u), \, \arat ut_N^{1/3}+\arao \mathcal{A}^{(2)}(\ara^{-1} u) \right\} . 
\end{equation}
The leading order term on the right-hand side is $t_N^{1/3}$. 
Since $\arat<\aras<0$, we find that the maximum is different for $u>0$ and $u<0$ resulting in~\eqref{eq:tempor0} and~\eqref{eq:tempor1}, respectively. 

Formally,~\eqref{eq:tempor11} implies (c). Indeed, setting $u=0$ in~\eqref{eq:tempor11} and recalling that the marginals of the Airy$_2$ process are distributed as the GUE Tracy-Widom distribution, we formally obtain 
\beq \label{eq:tempor1112}
	\frac{\X_N(0)}{-\sigma_2 t_N^{1/3}}\approx 
	\max\{  \chi_2^{(1)}, \arao  \chi_2^{(2)} \}
	\approx 	\max\{  \chi_2^{(1)}, r  \chi_2^{(2)} \}.
\eeq
We may also set $u=\xi t_N^{-1/3}$ in~\eqref{eq:tempor11} and formally obtain 
\beq \label{eq:tempor12}
    \frac{\X_N(\xi t_N^{-1/3})}{-\sigma_2 t_N^{1/3}}\approx 
	\max\{ \aras \xi + \chi_2^{(1)}, 
	\arat \xi+\arao \chi_2^{(2)} \}
	\approx \max\{ c_1 \xi + \chi_2^{(1)}, 
	c_2 \xi+r \chi_2^{(2)} \}
\eeq
for all $\xi\in\realR$, where $c_1,c_2$ are the limits of $\aras$ and $\arat$ as $N\to\infty$. Note that the GUE Tracy-Widom random variables $\chi_2^{(1)}, \chi_2^{(2)}$ are same for all $\xi$. 
A slight extension of the proof for the case (c) implies this result but we do not include the proof in this paper. (See Corollary 2.7 of \cite{Ferrari-Nejjar15} for a similar case for infinite TASEP.) 

\begin{rmk} \label{rmk:rhob2}
When $\rho>1/2$, the following holds. In this case it is easy to see that, after time $(\rho^{-1}-1)N+O(N^{1/2})$, the rightmost particle will meet the leftmost one (on the ring) and then cannot move for certain time, which we call frozen time, until the local density becomes less than $1$. If a particle falls into its frozen time, it stays  at its location and the average speed is zero. Besides of this situation, the particle fluctuations are the same as that in the case $\rho\le 1/2$. More explicitly, let us define 
\begin{equation}
s'_{j}= \begin{dcases}
j+1-\alpha, & j=0,1,\cdots, j_0-1,\\
s_j,& j\ge j_0,
\end{dcases}
\end{equation}
and
\begin{equation}
s''_j=\begin{dcases}
\left(\sqrt{j-\alpha}+\sqrt{j(\rho^{-1}-1)}\right)^2, & j=1,\cdots, j_0-1,\\
s_j,& j\ge j_0,
\end{dcases}
\end{equation}
where
\begin{equation}
j_0:=\min\left\{j\ge 1; j\left(\sqrt{j-\alpha}+\sqrt{j+1-\alpha}\right)^2\ge \frac{\rho}{1-\rho}\right\}.
\end{equation}
Then Theorem \ref{thm:limiting_process_step} holds if we replace $\mathcal{S}_j$ by $(s'_j,s''_{j+1})$, and Theorem \ref{thm:limiting_process_step_shock} hold provided $j\ge j_0$ for all $j$. Note that when $j<j_0$, $s''_{j}<s'_{j}$, and hence there is a gap between the two intervals $\mathcal{S}_{j-1}$ and $\mathcal{S}_j$ which is exact a frozen time. 
\end{rmk}

\subsubsection{Relaxation and super-relaxation time scale}\label{sec:reldis}


In the super-relaxation time scale $t\gg L^{3/2}$, the density profile is flat for both infinite TASEP and periodic TASEP with periodic step initial condition. 
However, the two models are expected to have different fluctuations. 
For the infinite TASEP, the distance scale $O(L)$ of the initial condition is smaller than the spatial correlation scale $O(t^{2/3})$ when $t\gg L^{3/2}$, the initial condition is effectively flat. 
This can be seen easily using the corresponding directed last passage percolation: see Section~\ref{sec:inftaseprel} for a further discussion. 
Since the initial condition is still deterministic, we expect that the fluctuations are of order $t^{1/3}$ and are given by the Airy$_1$ process. 
On the other hand,  the periodic TASEP is expected to have $t^{1/2}$ fluctuations with Gaussian distribution 
because it is expected to be in the equilibrium dynamics due to the system size effect. 

In the relaxation time scale $t=O(L^{3/2})$, it was shown in  \cite{Prolhac16, Baik-Liu16} that the one-point distribution for the periodic TASEP has $t^{1/3}$ height fluctuations and converges to a distribution which is different from the GUE Tracy-Widom distribution. 
The infinite TASEP should also have $t^{1/3}$ height fluctuations but with the one-point distribution which is presumably different from the periodic case.
See Section~\ref{sec:inftaseprel} for a heuristic discussion about this distribution function.

\subsection{Organization of the paper}

We prove the theorems by studying the corresponding directed last passage percolation (DLPP) models: the periodic DLPP and the usual DLPP. 
In Section \ref{sec:estimates_DLPP_periodic_DLPP}, we estimate the probability of the event that the maximal path deviates from the diagonal path in both DLPP models. 
Translated into TASEP, this implies that in the sub-relaxation time scale, the fluctuations of the periodic TASEP and the infinite TASEP have the same distribution with high probability. 
Putting together with the known fluctuation results on the usual DLPP and their extensions to the periodic step initial condition, we prove the theorems in  Section \ref{sec:proof}. 
Some technical lemmas are postponed to Section \ref{sec:others}.
Finally, in Appendix~\ref{sec:appendix} we discuss the evolution of the density profile by solving the Burger's equation with the  periodic step initial condition.

\subsubsection*{Acknowledgments}
We would like to thank Ivan Corwin  and Patrik Ferrari for useful conversations and comments. 
The work of Jinho Baik was supported in part by NSF grants DMS1361782.

\section{Periodic directed last passage percolation} 
\label{sec:estimates_DLPP_periodic_DLPP}


The periodic directed last passage percolation (DLPP) model is defined as follows. 
The period of the model is a lattice point $\mr{v}=(\mr{v}_1,\mr{v}_2)\in\intZ^2$ satisfying $\mr{v}_2<0<\mr{v}_1$. 
We assign periodic random variables $w(\mr p)$ to $\mr p\in\intZ^2$ satisfying $w(\mr p)=w(\mr p+\mr v)$. 
Apart from the periodicity, we assume that the random variables $w(\mr p)$ are \iid
 The point-to-point last passage time from $\mr p=(\mr p_1,\mr p_2)\in\intZ^2$ to $\mr q=(\mr q_1,\mr q_2)\in\intZ^2$ is defined 
by 
\begin{equation}
\label{eq:aux_017}
H_{\mr p}(\mr q)=\begin{dcases}
\max_{\pi} \sum_{\mr r\in\pi}w(\mr r),& \text{if $\mr p_1\le \mr q_1$ and $\mr p_2\le \mr q_2$,}\\
-\infty,							  & \text{otherwise,} 
\end{dcases}
\end{equation}
where the maximum is taken among all the possible up/right lattice paths\footnote{A lattice path is a path consists of unit horizontal/vertical line segments whose endpoints are lattice points.} $\pi$  starting from $\mr p$ and ending at $\mr q$, and the summation is taken over all lattice points $\mr r$ on the path $\pi$. For simplification, we write $H_{\mr 0}(\mr q)=H(\mr q)$ when $\mr p=\mr 0$.


The usual DLPP model is the case with\footnote{$|\mr v|=\sqrt{\mr v_1^2+\mr v_2^2}$ denotes the norm of $\mr v=(\mr v_1, \mr v_2)$.} $|\mr v|=\infty$. In this case, $w(\mr p)$ are \iid for all $\mr p\in \intZ^2$. 
The  point-to-point last passage time for the usual DLPP model is denoted by $G_{\mr p}(\mr q)$. 
As above, we write $G_{\mr 0}(\mr q)$ by $G(\mr q)$. 
 

We assume that $w(\mr p)$ are exponential random variables with parameter $1$. 
The analysis in this paper can also be applied to geometric random variables but we do not discuss it here. 
The goal of this section is to compare the periodic DLPP with the usual DLPP. 
This is obtained by studying the transversal fluctuations. 


\subsection{DLPP} 
\label{sec:tail_DLPP}

We first study the usual DLPP model without the periodicity condition. 

\subsubsection{Tail estimates of point-to-point last passage time}
For any fixed positive constants $c_1<c_2$ we define
\begin{equation}\label{eq:Qdez}
\mr Q(c_1,c_2)=\left\{\mr q=(\mr q_1,\mr q_2)\in\intZ^2_{+}; c_1<\mr q_2/\mr q_1<c_2 \right\}.
\end{equation}
It is well known that \cite{Johansson00}  for fixed positive constants $c_1,c_2$ satisfying $c_1<c_2$, 
\begin{equation}
\label{eq:aux_042}
	\prob\left( \frac{G(\mr q)-d(\mr q)}{s(\mr q)} \le x\right) \to \FGUE(x) 
\end{equation}
for all $x\in \realR$ as $\mr q\in \mr Q(c_1,c_2)$ satisfies $|\mr q|\to \infty$.
The constants in~\eqref{eq:aux_042} are given by 
\begin{equation}
\label{eq:aux_026}
d(\mr q)=(\sqrt{\mr q_1}+\sqrt{\mr q_2})^2, \qquad s(\mr q)= (\mr q_1\mr q_2)^{-1/6}(\sqrt{\mr q_1}+\sqrt{\mr q_2})^{4/3}.
\end{equation}


The tail estimates of $G(\mr q)$ can be found in, for example, \cite[Section 3.1]{Baik-Ben_Arous-Peche05} and \cite[Sections 3 and 4]{Baik-Ferrari-Peche14}. 
The following estimates are slightly stronger than those written explicitly in the above references, but they can be obtained from the same analysis. We do not provide the detail here. 

\begin{lm}[Tail estimates for DLPP]
\label{prop:tail_estimate}
Suppose $c_1, c_2$ are both fixed positive constants satisfying $c_1<c_2$. Then there exist positive constants $x_0, C$, and $c$ such that
\begin{equation}
\label{eq:lower_tail}
\prob\left(\frac{G(\mr q)-d(\mr q)}{s(\mr q)}\ge -x\right)\ge 1-e^{-cx^{3/2}},
\end{equation}
and
\begin{equation}
\label{eq:upper_tail}
\prob\left(\frac{G(\mr q)-d(\mr q)}{s(\mr q)}\le x\right)\ge 1-e^{-cx},
\end{equation}
for all $x\ge x_0$ and $\mr q\in \mr Q(c_1,c_2)$ satisfying $|\mr q|\ge C$.
\end{lm}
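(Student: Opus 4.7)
The plan is to invoke the Laguerre Unitary Ensemble (LUE) representation of $G(\mr q)$ and then extract the tail bounds from uniform pointwise estimates on the correlation kernel. By Johansson's identity for exponential weights, $G(\mr q)$ has the same distribution as the largest eigenvalue of an LUE matrix with parameters determined by $\mr q=(\mr q_1,\mr q_2)$, so
\[
\prob\bigl(G(\mr q)\le t\bigr)=\det(I-K_{\mr q})_{L^2(t,\infty)},
\]
where $K_{\mr q}$ is the LUE correlation kernel, expressible as a double contour integral. Crucially, $K_{\mr q}$ is the projection onto the span of finitely many Laguerre functions, so as an operator on $L^2(\realR)$ it is positive with spectrum in $\{0,1\}$, and its restriction to any interval has spectrum in $[0,1]$.

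Next I would establish uniform pointwise bounds on the rescaled diagonal $\widetilde K_{\mr q}(u):=s(\mr q)K_{\mr q}(d(\mr q)+s(\mr q)u,\,d(\mr q)+s(\mr q)u)$. Steepest descent analysis of the contour integral representation, already carried out in \cite{Baik-Ben_Arous-Peche05} and \cite{Baik-Ferrari-Peche14}, gives, uniformly in $\mr q\in\mr Q(c_1,c_2)$ with $|\mr q|$ large:
\[
\widetilde K_{\mr q}(u)\le Ce^{-cu}\ \text{ for }u\ge 0,\qquad \widetilde K_{\mr q}(u)\ge c\sqrt{|u|}\ \text{ for }-c_*|\mr q|^{2/3}\le u\le -u_0,
\]
the latter matching the Airy diagonal $A(u,u)\sim\pi^{-1}\sqrt{|u|}$ as $u\to-\infty$. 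Uniformity in the cone is structural: the relevant saddle depends smoothly on $\mr q_2/\mr q_1\in(c_1,c_2)$, which stays in a fixed compact set.

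For the upper tail I would apply the first-moment trace bound for determinantal point processes, using that the intensity of eigenvalues is $K_{\mr q}(y,y)$:
\[
\prob\bigl(G(\mr q)>d(\mr q)+xs(\mr q)\bigr)\le \int_x^\infty\widetilde K_{\mr q}(u)\,du\le C'e^{-cx}.
\]
For the lower tail I would apply the elementary inequality $1-\lambda\le e^{-\lambda}$ for $\lambda\in[0,1]$ to the eigenvalues of $K_{\mr q}|_{L^2(d-xs,\infty)}$, giving
\[
\prob\bigl(G(\mr q)\le d(\mr q)-xs(\mr q)\bigr)=\det(I-K_{\mr q})_{L^2(d-xs,\infty)}\le \exp\!\left(-\int_{-x}^0\widetilde K_{\mr q}(u)\,du\right),
\]
and the lower bound $\widetilde K_{\mr q}(u)\gtrsim \sqrt{|u|}$ then produces $\int_{-x}^0\widetilde K_{\mr q}(u)\,du\ge c''x^{3/2}$, yielding the claimed estimate.

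The main obstacle is not the existence of the individual pointwise kernel bounds, which are classical, but verifying their uniformity: one must check that the saddle point analysis does not degenerate as $\mr q_2/\mr q_1$ approaches the boundary of $(c_1,c_2)$ or as $|\mr q|\to\infty$, and that the $\sqrt{|u|}$ lower bound on $\widetilde K_{\mr q}$ holds on an interval wide enough to accommodate all $x\ge x_0$. Both are matters of careful bookkeeping of error terms in the steepest descent performed in the cited references, rather than of new ideas.
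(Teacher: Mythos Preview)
Your approach is correct and is essentially what the paper has in mind: the paper gives no proof at all, merely citing \cite[Section~3.1]{Baik-Ben_Arous-Peche05} and \cite[Sections~3 and~4]{Baik-Ferrari-Peche14} and remarking that the stated (slightly stronger) estimates follow from the same analysis. Your sketch---LUE Fredholm determinant, uniform steepest-descent bounds on the rescaled diagonal kernel, the trace bound for the upper tail, and the inequality $\det(I-K)\le e^{-\operatorname{tr}K}$ together with the $\sqrt{|u|}$ lower bound for the lower tail---is a standard and valid way to extract exactly these estimates from those references, and your identification of the uniformity in $\mr q_2/\mr q_1\in(c_1,c_2)$ as the only bookkeeping issue is accurate. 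One small remark: for $x$ beyond order $|\mr q|^{2/3}$ the lower-tail bound is vacuous since $d(\mr q)-xs(\mr q)<0$ while $G(\mr q)>0$ a.s., so the restriction $-c_*|\mr q|^{2/3}\le u$ on your kernel lower bound is harmless.
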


\subsubsection{Transversal fluctuations of DLPP}

The maximal  path from $\mr p$ to $\mr q$ is concentrated about the diagonal line segment $\overline{\mr p\mr q}$ 
with the traversal fluctuations of order $|\mr q- \mr p|^{2/3}$ \cite{Johansson00a, Baik-Deift-McLaughlin-Miller-Zhou01}.\footnote{These results are for the Poissonian version of DLPP and the geometric random variables, but the results extend to exponential random variables. But we do not need these results. Instead we use the ideas in them to prove  Proposition~\ref{prop:tail_transversal} below which implies this statement for exponential variables. 
}  
For our purpose, we need to estimate the deviations from the diagonal line segment. 
For the Poisson version of the DLPP model, such an estimate was proved for the case when $\mr q_1=\mr q_2$ by 
Basu, Sidoravicius, and Sly in \cite{Basu-Sidoravicius-Sly16}. 
In our case, we need an uniform estimate for different end points $\mr q$ in $\mr Q(c_1,c_2)$ for the DLPP model with exponential random variables. 
See Remark~\ref{rmk:compwBSS} below for a comparison between our proof with that of \cite{Basu-Sidoravicius-Sly16}. 

We introduce two notations. 
For two lattice points $\mr p=(\mr p_1,\mr p_2)$ and $\mr q=(\mr q_1,\mr q_2)$ satisfying $\mr p_1\le \mr q_1$ and $\mr p_2\le \mr q_2$, 
we denote by $\pi^{max}_{\mr  p}(\mr q)$ a maximal up/right lattice path from $\mr p$ to $\mr q$: it satisfies $\sum_{\mr r\in \pi^{max}_{\mr  p}(\mr q)} w(\mr r)=G_{\mr p}(\mr q)$. 
If there are more than one such path, we pick the topmost one. 
When $\mr p=\mr 0$, we write $\pi^{max}(\mr q)$ for  $\pi^{max}_{\mr  0}(\mr q)$. 
For $y>0$, we set $B_{\overline{\mr p\mr q}}(y)=\{\mr r\in\intZ^2; \dist ( \mr r, \overline{\mr p\mr q} )\le y\}$.

\begin{prop}[Estimate for the transversal fluctuations of DLPP]
\label{prop:tail_transversal}
For fixed positive constants $c_1 < c_2$ and  $\epsilon$,  there exist positive constants $C$ and $c$ such that
\begin{equation}
\label{eq:tail_transversal}
\prob\left(\pi^{max}(\mr q)\subseteq B_{\overline{\mr 0\mr q}}(y|\mr q|^{2/3})\right)\ge 1- e^{-cy^2}
\end{equation}
for all $\mr q\in \mr Q(c_1,c_2)$ satisfying $|\mr q|>C$, and all $y$ satisfying $(\log|\mr q|)^{1/2+\epsilon}\le y \le |\mr q|^{1/3}$.
\end{prop}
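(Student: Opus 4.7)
I would prove this by union-bounding over candidate ``exit points.'' If $\pi^{max}(\mr q)$ leaves the tube $B_{\overline{\mr 0\mr q}}(y|\mr q|^{2/3})$, then the path must pass through some lattice point $\mr r$ with $\mr 0\le \mr r\le\mr q$ (componentwise) and $\dist(\mr r,\overline{\mr 0\mr q})>y|\mr q|^{2/3}$. Splitting the optimal path at $\mr r$ gives $G(\mr q)=G(\mr r)+G_{\mr r}(\mr q)-w(\mr r)$, so the event $\{\pi^{max}(\mr q)\ni\mr r\}$ is contained in $\{G(\mr r)+G_{\mr r}(\mr q)\ge G(\mr q)\}$. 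It therefore suffices to bound this latter event by $e^{-c_1 y^2}$ for each such $\mr r$ and then union-bound over the $O(|\mr q|^2)$ candidates; the union bound is absorbed because the hypothesis $y\ge(\log|\mr q|)^{1/2+\epsilon}$ forces $y^2\gg\log|\mr q|$.

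\textbf{Key steps.} I would first establish the deterministic deficit estimate
\begin{equation*}
    d(\mr q)-d(\mr r)-d(\mr q-\mr r)\ge c_0\,\delta^2/|\mr q|
\end{equation*}
for $\mr r$ at perpendicular distance $\delta$ from $\overline{\mr 0\mr q}$ with both $\mr r$ and $\mr q-\mr r$ in a cone $\mr Q(c'_1,c'_2)$, where $c_0>0$ depends only on the cone. This follows from the $1$-homogeneity of $d$ (so $d(\mr r)+d(\mr q-\mr r)$ equals $d(\mr q)$ along the segment) together with the strict concavity of $d$ transverse to the radial direction, via a second-order Taylor expansion. Choosing $\delta=y|\mr q|^{2/3}$ gives a deficit $\Delta\ge c_0 y^2|\mr q|^{1/3}$, while $s(\mr r)$, $s(\mr q-\mr r)$, $s(\mr q)$ are all $\Theta(|\mr q|^{1/3})$ on the cone. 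I would then apply the upper tail of Lemma~\ref{prop:tail_estimate} to $G(\mr r)$ and $G_{\mr r}(\mr q)$ and the lower tail to $G(\mr q)$, each with fluctuation parameter of order $y^2$: on the intersection of the three favorable events,
\begin{equation*}
    G(\mr r)+G_{\mr r}(\mr q)<\bigl(d(\mr r)+d(\mr q-\mr r)\bigr)+\tfrac{2}{3}\Delta=d(\mr q)-\tfrac{1}{3}\Delta<G(\mr q),
\end{equation*}
so $\mr r$ cannot lie on $\pi^{max}(\mr q)$. The complementary probability is at most $3e^{-c_1 y^2}$, and summing over the $O(|\mr q|^2)$ relevant $\mr r$'s and using $y^2\ge(\log|\mr q|)^{1+2\epsilon}\gg\log|\mr q|$ closes the argument.

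\textbf{Main obstacle.} The main technical issue is the uniformity in $\mr r$: both the deficit lemma and Lemma~\ref{prop:tail_estimate} require $\mr r$ and $\mr q-\mr r$ to lie in a \emph{fixed} cone with uniform constants, but a candidate $\mr r$ on the tube boundary may have $|\mr r|$ as small as $y|\mr q|^{2/3}$ and angular direction significantly different from that of $\mr q$, so that $\mr r_2/\mr r_1$ need not be bounded in any fixed cone. I plan to handle this by partitioning the candidate $\mr r$'s into (a) those in a slightly enlarged cone $\mr Q(c_1/2,2c_2)$ around the direction of $\mr q$, where the main argument applies with constants uniform over $\mr q\in\mr Q(c_1,c_2)$; and (b) a ``corner'' remainder where $\mr r$ (or $\mr q-\mr r$) is angularly far from the optimal ray. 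In regime (b), $d(\mr r)+d(\mr q-\mr r)$ is already smaller than $d(\mr q)$ by a constant fraction of $|\mr q|$, so the upper tail applied with a macroscopic parameter yields a much stronger bound of order $e^{-c|\mr q|^{2/3}}$, which dominates $e^{-cy^2}$ since $y\le|\mr q|^{1/3}$. An alternative that sidesteps the cone issue entirely is to union-bound over antidiagonals $\mr r_1+\mr r_2=m$ for $m$ in a dyadic set, using monotonicity of the maximal path to transfer the bound from the sampled slabs to the full tube.
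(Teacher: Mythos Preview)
Your overall strategy is the same as the paper's: union-bound over candidate exit points, establish a deterministic deficit $d(\mr q)-d(\mr r)-d(\mr q-\mr r)$, and invoke Lemma~\ref{prop:tail_estimate}. The paper even splits into the same two regimes---a ``middle'' regime where both $\mr r$ and $\mr q-\mr r$ lie in a fixed cone (your regime (a), their Case~3) and a ``corner'' regime near the endpoints of the tube boundary (your regime (b), their Cases~1--2). One minor difference: the paper union-bounds only over the $O(|\mr q|)$ lattice points neighboring the two boundary segments $\Gamma^\pm$ of the tube, rather than all $O(|\mr q|^2)$ exterior points, but this is inconsequential given the hypothesis on $y$.

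Your handling of regime (b), however, contains a genuine gap. The claim that the deficit is ``a constant fraction of $|\mr q|$'' is false for the corner points that matter. Take for instance $\mr q=(n,n)$ and $\mr r=(0,k)$ on the tube boundary; then $k=\Theta(y|\mr q|^{2/3})$ and a direct computation gives deficit $d(\mr q)-d(\mr r)-d(\mr q-\mr r)\approx k=\Theta(y|\mr q|^{2/3})$, which is \emph{linear} in $y$, not of order $|\mr q|$. More seriously, even with the correct deficit you cannot apply Lemma~\ref{prop:tail_estimate} to $G(\mr r)$, because that lemma requires $\mr r$ to lie in a fixed cone $\mr Q(c_1',c_2')$---and regime (b) is precisely where this fails. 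So both the size of the deficit and the applicability of the tail bound break down simultaneously.

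The paper's fix is a monotonicity replacement. For $\mr p$ on $\Gamma^+$ with $\mr p_1\le\epsilon_1 y|\mr q|^{2/3}$ (near the left corner), choose $\mr r$ on $\Gamma^+$ with $\mr r_1=[\epsilon_1 y|\mr q|^{2/3}]$ and let $\mr r'$ be the left endpoint of $\Gamma^+$. Then $\mr p\le\mr r$ and $\mr r'\le\mr p$ componentwise, so $G(\mr p)+G_{\mr p}(\mr q)\le G(\mr r)+G_{\mr r'}(\mr q)$. Now both $\mr r$ and $\mr q-\mr r'$ \emph{do} lie in a fixed cone (depending only on $\epsilon_1,c_1,c_2$), so Lemma~\ref{prop:tail_estimate} applies, and one checks directly that $d(\mr q)-d(\mr r)-d(\mr q-\mr r')\ge c_3 y|\mr q|^{2/3}$ for suitable $\epsilon_1$. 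This yields $\prob(\mr p\in\pi^{max}(\mr q))\le e^{-cy|\mr q|^{1/3}}\le e^{-cy^2}$ since $y\le|\mr q|^{1/3}$. Your alternative suggestion invoking monotonicity is in the right spirit but is not the same device and is too vague as stated; the specific replacement above is what makes regime (b) go through.
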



\begin{rmk}\label{rmk:compwBSS}
For the Poissonian version of the DLPP, Basu, Sidoravicius, and Sly obtained a weaker lower bound $1-e^{-cy}$ for the special case $\mr q_1=\mr q_2$ only, but their estimate applies to all $y\ge C$.  
See \cite[Theorem 11.1]{Basu-Sidoravicius-Sly16}. 
They used a tail estimate on where the maximal path intersects the middle vertical line $\{\mr p=(\mr p_1,\mr p_2)\in\realR^2; \, \mr p_1=\mr q_1/2\}$, and then used a so-called chaining argument to recurrently apply this estimate. 
Our approach is different. 
We mainly use the tail estimates in Lemma~\ref{prop:tail_estimate} and prove that the probability the maximal path intersects the boundary of $B_{\overline{\mr 0\mr q}}(y|\mr q|^{2/3})$ is very small.
The proof given here is much simpler since we assume that $y\ge (\log|\mr q|)^{1/2+\epsilon}$. 
Basu, Sidoravicius, and Sly needed more delicate argument in order to obtain an estimate for $y=O(1)$.
On the other hand, here we need to be careful with the uniformity in $\mr q$ in $\mr Q(c_1,c_2)$. 
  
We also note that the Gaussian estimate is not optimal. It is believed that the optimal exponent in the tail should be $y^3$ since the argmax of $\mathcal{A}_2(y)-y^2$ (where $\mathcal{A}_2(y)$ is the Airy$_2$ process) has the tail $e^{-c|y|^3}$, see \cite{Corwin-Hammond14,Quastel-Remenik15,Bothner-Liechty13} for the discussions about the argmax.

\end{rmk}

\begin{proof}[Proof of Proposition~\ref{prop:tail_transversal}]
Fix $\mr q\in \mr Q(c_1,c_2)$.
We set $\Gamma= \overline{\mr 0\mr q}$. 
Consider the two lines which are parallel to $\overline{\mr 0\mr q}$ and of distance $y|\mr q|^{2/3}$ to $\overline{\mr 0\mr q}$. We denote by $\Gamma^{+}$ the part of the top line which lies inside the rectangle with vertices $(0,0), (\mr q_1, 0), (\mr q_1, \mr q_2)$, and $(0, \mr q_2)$. 
The similar part of the bottom line is denoted by $\Gamma^{-}$. 
See Figure~\ref{fig:strip_DLPP_1}.
\begin{figure}
\centering
\includegraphics[scale=0.25]{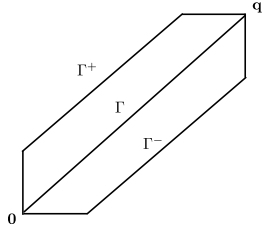}
\caption{Illustration of $\Gamma$ and $\Gamma^\pm$}
\label{fig:strip_DLPP_1}
\end{figure}

We say that a lattice point $\mr p$ is \emph{neighboring} to line $\mathcal{L}$ if there exists a real constant $c$ with $|c|<1$ satisfying $\mr p +c\mr e$ is on $\mathcal{L}$, where $\mr e$ is either the vector $(0,1)$ or $(1,0)$. Note that when $c=0$, $\mr p$ is on the line $\mathcal{L}$. We also call the set of all lattice points neighboring to the line the \emph{neighborhood} of the line. One key fact about this definition is that any lattice path intersecting a line will contain at least one point in the neighborhood of the line. 

Set
\begin{equation}
A^\pm=\{\mr p=(\mr p_1,\mr p_2)\in\intZ^2 \, ; \, \mr p \mbox{ is neighboring to } \,   \Gamma^\pm, 0\le \mr p_1\le \mr q_1, \, 0\le \mr p_2\le \mr q_2\}.
\end{equation}
Since a lattice path intersecting 
 $\Gamma^\pm$ contains at least one lattice point in $A^\pm$, we have
\begin{equation}
\label{eq:aux_016}
\begin{split}
\prob\left(\pi^{max}(\mr q)\subseteq B_{\overline{\mr 0\mr q}}(y|\mr q|^{2/3})\right) 
	&= 1-\prob\left(\pi^{max}(\mr q) \mbox{ intersects } \Gamma^\pm\right)\\ 
	&\ge 1- \sum_{\mr p\in A^\pm}\prob\left(\mr p\in \pi^{max}(\mr q)\right).
\end{split}
\end{equation}
Note that there are only $O(|\mr q|)$ points in $A^\pm$. 
Since we assume that $y \ge (\log|\mr q|)^{1/2+\epsilon}$, the proposition follows if we show that 
\begin{equation}
\label{eq:aux_009}
\prob\left(\mr p\in \pi^{max}(\mr q)\right) \le e^{-cy^2}
\end{equation} 
for all $\mr p\in A^\pm$. 
By symmetry, we only consider $A^+$. 
We consider three cases separately. 

\medskip


Case 1: Suppose that the point $\mr p=(\mr p_1,\mr p_2)\in A^+$ satisfies $\mr p_1\le \epsilon_1y|\mr q|^{2/3}$. This implies that $\mr p$ is near the left endpoint of $\Gamma^+$.  Here $\epsilon_1$ is a small positive constant independent of $\mr q$. The value of $\epsilon_1$ will be  determined later.  

Let $\mr r=(\mr r_1,\mr r_2)$ be the point in $A^+$ satisfying 
\begin{equation}
\mr r_1 = \left[\epsilon_1 y|\mr q|^{2/3} \right], \qquad \mr r_2= \frac{y|\mr q|^{5/3}}{\mr q_1}+\epsilon_1\frac{y\mr q_2|\mr q|^{2/3}}{\mr q_1}+O(1)
\end{equation}
where $O(1)$ is a term bounded by $2$ so that $\mr p\in A^+$. We also denote by $\mr r'=(\mr r'_1,\mr r'_2)$ the point in $A^+$ with
\begin{equation}
\mr r'_1=0,\qquad \mr r'_2=\frac{y|\mr q|^{5/3}}{\mr q_1}+O(1).
\end{equation}
See Figure~\ref{fig:strip_DLPP_2}. 
We observe that in the event that $\mr p\in \pi^{max}(\mr q)$, we have 
$G(\mr q)= G(\mr p)+G_{\mr p}(\mr q)\le G(\mr r)+G_{\mr r'}(\mr q)$
since $G_{\mr p}(\mr q)\le G_{\mr r'}(\mr q)$ and $G(\mr p)\le G(\mr r)$.
Therefore, 
\begin{equation}
\label{eq:aux_010}
\prob\left(\mr p\in \pi^{max}(\mr q)\right) \le \prob\left(G(\mr q)\le G(\mr r)+G_{\mr r'}(\mr q)\right).
\end{equation}
Now we take $\epsilon_1$ sufficiently small such that
\begin{equation}
\label{eq:aux_008}
d(\mr r)+d(\mr q-\mr r')\le d(\mr q) -c_3y|\mr q|^{2/3},
\end{equation}
for some positive constant $c_3$. 
This is is equivalent to 
\begin{equation}
\left(\sqrt{\epsilon_1 y|\mr q|^{2/3}}+\sqrt{\frac{y|\mr q|^{5/3}}{\mr q_1}+\epsilon_1\frac{y\mr q_2|\mr q|^{2/3}}{\mr q_1}}\right)^2 +\left(\sqrt{\mr q_1}+\sqrt{\mr q_2-\frac{y|\mr q|^{5/3}}{\mr q_1}}\right)^2 +O(1)\le \left(\sqrt{\mr q_1}+\sqrt{\mr q_2}\right)^2-c_3y|\mr q|^{2/3}.
\end{equation}
The left hand side of the above inequality equals to
\begin{equation}
\left(\sqrt{\mr q_1}+\sqrt{\mr q_2}\right)^2- \left(\frac{|\mr q|}{\sqrt{\mr q_1\mr q_2}}-\epsilon_1\frac{\mr q_1+\mr q_2}{\mr q_1}-2\sqrt{\epsilon_1}\frac{\sqrt{|\mr q|+\epsilon_1\mr q_2}}{\sqrt{\mr q_1}}\right) \cdot y|\mr q|^{2/3}+o(y|\mr q|^{2/3}).
\end{equation}
Therefore, if we choose $c_3<\min_{\mr q\in\mr  Q(c_1,c_2)}\frac{|\mr q|}{\sqrt{\mr q_1\mr q_2}}$ and $\epsilon_1$ small enough such that
\begin{equation}
\epsilon_1\frac{\mr q_1+\mr q_2}{\mr q_1}+2\sqrt{\epsilon_1}\frac{\sqrt{|\mr q|+\epsilon_1\mr q_2}}{\sqrt{\mr q_1}}<\min_{\mr q\in \mr Q(c_1,c_2)}\frac{|\mr q|}{\sqrt{\mr q_1\mr q_2}} -c_3
\end{equation}
uniformly for all $\mr q\in \mr Q(c_1,c_2)$, then~\eqref{eq:aux_008} holds.

Note that $\mr r\in \mr Q(c_1',c_2')$ and $\mr q-\mr r'\in \mr Q(c_1',c_2')$ for some positive constants $c_1',c_2'$ which depends only  on $\epsilon_1$, $c_1$, and $c_2$. 
Hence using Lemma~\ref{prop:tail_estimate}, we obtain
\begin{equation}
\label{eq:aux_011}
\begin{split}
&\prob\left(G(\mr q)\le G(\mr r)+G_{\mr r'}(\mr q)\right)\\
\le & 1- \prob\left(G(\mr q)> d(\mr q) -\frac{c_3}{3}y|\mr q|^{2/3}, G(\mr r)< d(\mr r)+\frac{c_3}{3}y|\mr q|^{2/3}, G_{\mr r'}(\mr q)<d(\mr q-\mr r')+\frac{c_3}{3}y|\mr q|^{2/3}\right)\\
\le& 3-\prob\left(G(\mr q)> d(\mr q) -\frac{c_3}{3}y|\mr q|^{2/3}\right)-\prob\left(G(\mr r)< d(\mr r)+\frac{c_3}{3}y|\mr q|^{2/3}\right)-\prob\left(G_{\mr r'}(\mr q)<d(\mr q-\mr r')+\frac{c_3}{3}y|\mr q|^{2/3}\right)\\
\le& e^{-cy|\mr q|^{1/3}} \le e^{-cy^2}
\end{split}
\end{equation}
for some constant $c$ independent of $\mr q$. Together with~\eqref{eq:aux_010}, this implies~\eqref{eq:aux_009}.

\begin{figure}
\centering
\begin{minipage}{.4\textwidth}
\includegraphics[scale=0.25]{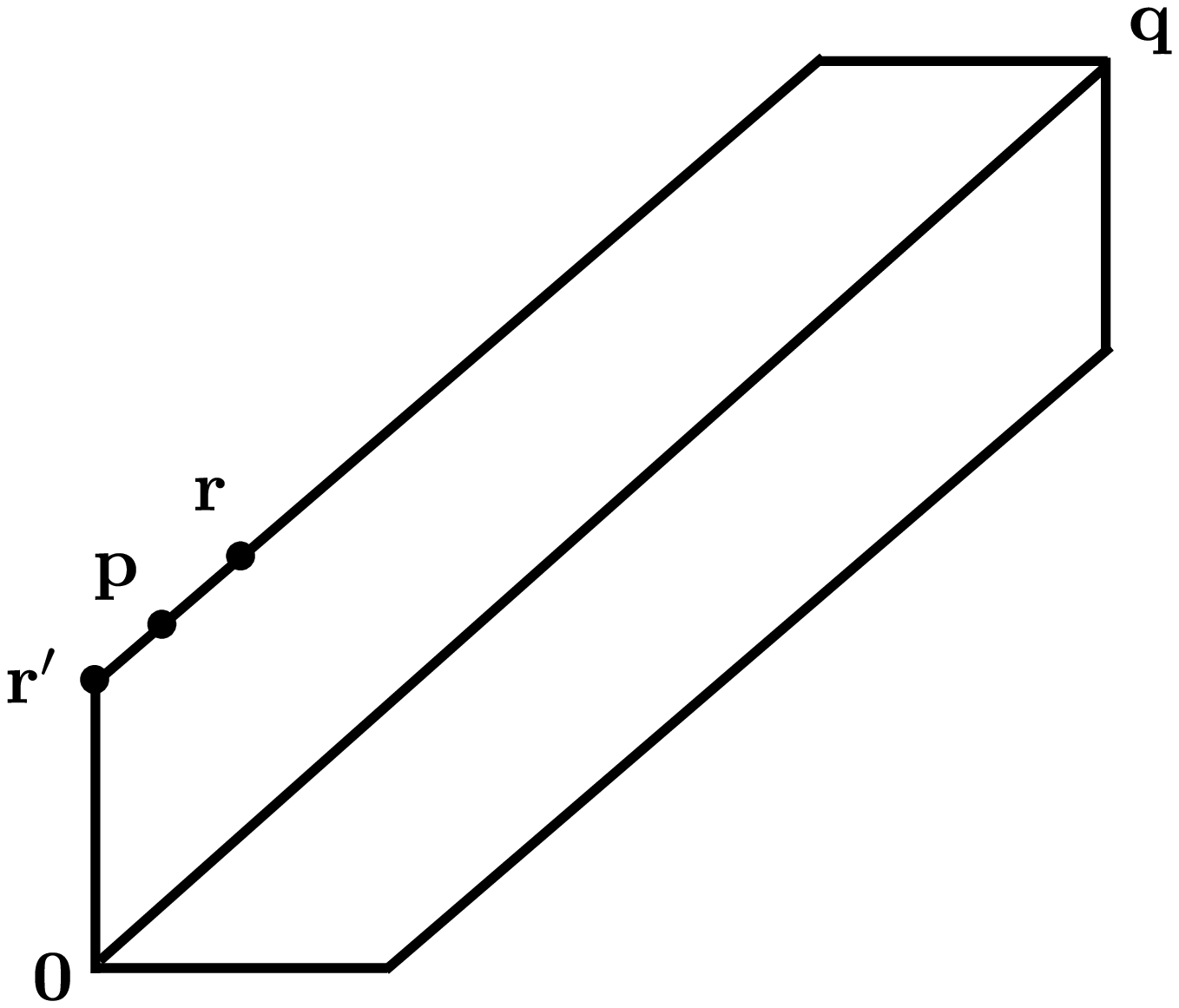}
\caption{Illustration of $\mr p$, $\mr r$ and $\mr r'$ in Case 1}
\label{fig:strip_DLPP_2}
\end{minipage}
\begin{minipage}{.4\textwidth}
\includegraphics[scale=0.25]{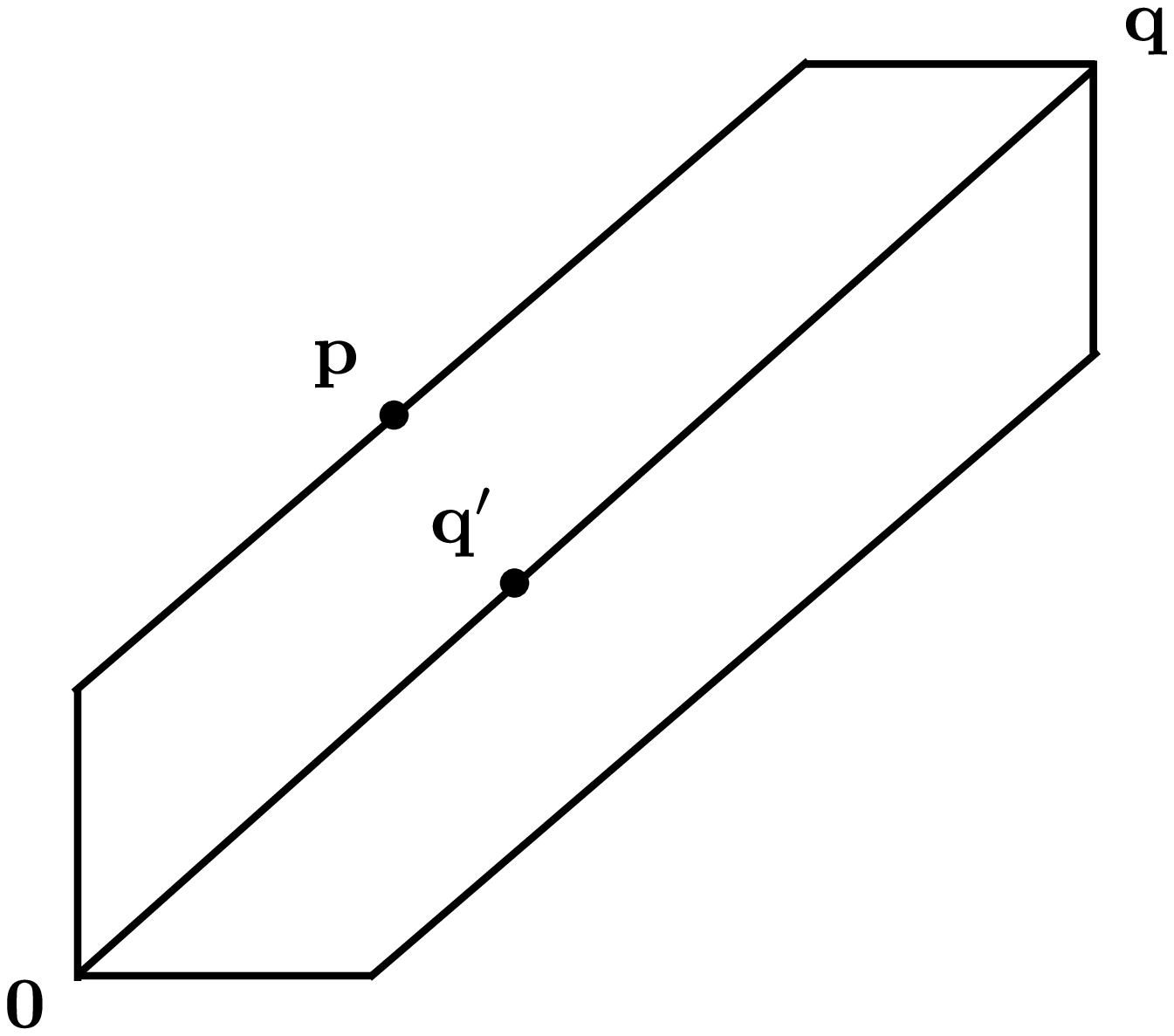}
\caption{Illustration of $\mr p$ and $\mr q'$ in Case 3}
\label{fig:strip_DLPP_3}
\end{minipage}
\end{figure}

\medskip

Case 2: Suppose that the point $\mr p=(\mr p_1,\mr p_2)$ satisfies $\mr q_2-\mr p_2\le \epsilon_2y|\mr q|^{2/3}$. 
The proof is similar to Case 1.

\medskip

Case 3: Suppose that the point $\mr p=(\mr p_1,\mr p_2)$ satisfies $\mr p_1 \ge \epsilon_1 y|\mr q|^{2/3}$ and $\mr q_2-\mr p_2\ge \epsilon_2y|\mr q|^{2/3}$, where $\epsilon_1$, $\epsilon_2$ are determined in the previous two cases. Note that in this case $\mr p$ is of distance at least $\min\{\epsilon_1,\epsilon_2\}y|\mr q|^{2/3}$ to the endpoints of $\Gamma^+$, and hence $\mr p$ and $\mr q-\mr p$ are both in $Q(c''_1,c''_2)$ for some positive constants $c''_1, c''_2$ which are independent of $\mr q$ and $\mr p$.  We will show that
\begin{equation}
\label{eq:aux_012}
d(\mr p)+d(\mr q-\mr p)\le d(\mr q)-c_4y^2|\mr q|^{1/3}
\end{equation}
for some constant $c_4$ independent of $\mr q$. Assuming this inequality,~\eqref{eq:aux_009} follows easily from an argument similar to~\eqref{eq:aux_011}. Therefore it remains to show~\eqref{eq:aux_012}.

To prove~\eqref{eq:aux_012}, it is sufficient to show the following two inequalities: 
\begin{equation}
\label{eq:aux_013}
d(\mr p) \le d(\mr q') -\frac12c_4y^2|\mr q|^{1/3},\qquad d(\mr q-\mr p)\le d(\mr q-\mr q') -\frac12c_4y^2|\mr q|^{1/3},
\end{equation}
where $\mr q'$ is a lattice point neighboring to $\Gamma$ which is 
given by
\begin{equation}
\label{eq:aux_014}
\mr q'_1=\mr p_1+\frac{y|\mr q|^{5/3}}{\sqrt{\mr q_2}(\sqrt{\mr q_1}+\sqrt{\mr q_2})}+O(1),\qquad \mr q'_2=\mr p_2-\frac{y|\mr q|^{5/3}}{\sqrt{\mr q_1}(\sqrt{\mr q_1}+\sqrt{\mr q_2})}+O(1),
\end{equation}
with $O(1)$ terms bounded by $2$. 
Noting that $\mr p_2=\mr p_1 \frac{\mr q_2}{\mr q_1} +\frac{y|\mr q|^{5/3}}{\mr q_1}$, this guarantees the existence of $\mr q'$ neighboring to $\Gamma$ satisfying the above equation. See Figure~\ref{fig:strip_DLPP_3}.
Consider the first inequality of~\eqref{eq:aux_013}. A tedious calculation shows that
\begin{equation}
\label{eq:aux_015}
\begin{split}
&d(\mr q') -d(\mr p) +O(1)\\
& =\left(\sqrt{\mr p_1+\frac{y|\mr q|^{5/3}}{\sqrt{\mr q_2}(\sqrt{\mr q_1}+\sqrt{\mr q_2})}}+\sqrt{\mr p_2-\frac{y|\mr q|^{5/3}}{\sqrt{\mr q_1}(\sqrt{\mr q_1}+\sqrt{\mr q_2})}}\right)^2 -\left(\sqrt{\mr p_1}+\sqrt{\mr p_2}\right)^2\\
&=\left(\mr p_1+\frac{y|\mr q|^{5/3}}{\sqrt{\mr q_2}(\sqrt{\mr q_1}+\sqrt{\mr q_2})}\right)\left(1+\sqrt{\frac{\mr q_2}{\mr q_1}}\right)^2-\left(\sqrt{\mr p_1}+\sqrt{\mr p_1\cdot \frac{\mr q_2}{\mr q_1} +\frac{y|\mr q|^{5/3}}{\mr q_1}}\right)^2\\
&=\frac{y^2|\mr q|^{10/3}}{\mr p_1\mr q_1\sqrt{\mr q_1\mr q_2}}\left(\sqrt{\frac{\mr q_2}{\mr q_1}}+\sqrt{\frac{\mr q_2}{\mr q_1}+\frac{y|\mr q|^{5/3}}{\mr p_1\mr q_1}}\right)^{-2}
\end{split}
\end{equation}
where the term $O(1)$ comes from the $O(1)$ perturbations in~\eqref{eq:aux_014}. 
Since $\mr p_1\ge \epsilon_1 y|\mr q|^{2/3}$,  the right hand side of~\eqref{eq:aux_015} is at least 
\begin{equation}
\frac12 c_4 y^2|\mr q|^{1/3}
\end{equation}
where $c_4$ only depends on $c_1,c_2$ and $\epsilon_1$. Thus we proved the first inequality of~\eqref{eq:aux_013}. The second one follows by observing that if we do the following change of variables the second inequality becomes the first one
\begin{equation}
\begin{split}
\mr q=(\mr q_1,\mr q_2) &\to \tilde {\mr q}=(\mr q_2,\mr q_1),\\
\mr p=(\mr p_1,\mr p_2) &\to \tilde {\mr p}=(\mr q_2-\mr p_2,\mr q_1-\mr p_1),\\
\mr q'=(\mr q'_1,\mr q'_2) & \to \tilde {\mr q}'=(\mr q_2- \mr q'_2,\mr q_1-\mr q'_1).
\end{split}
\end{equation}
\end{proof}

\subsection{Transversal fluctuations of periodic DLPP}
\label{sec:tail_periodic_DLPP}

We now consider  the  periodic DLPP. 
Recall that the period $\mr v=(\mr v_1,\mr v_2)$ is the lattice such that $\mr v_2<0<\mr v_1$ and $w(\mr q+\mr v)=w(\mr q)$ for all lattice points $\mr q$. 
We denote by $\prob_{\mr v}$ the probability measure of the periodic DLPP 
and by $\prob$ the probability measure of the usual DLPP. 
We use the same notations $\mr Q(c_1,c_2)$, $\pi^{max}_{\mr p}(\mr q)$, and $B_{\overline{\mr p\mr q}}(y)$ as in Subsection~\ref{sec:tail_DLPP}. 
Note that  if $\dist( \mr v,\overline{\mr p\mr q}) >2y$, then the random variables $w(\mr r)$ with $\mr r\in B_{\overline{\mr p\mr q}}(y)$ are independent since for any $\mr r,\mr r'\in B_{\overline{\mr p\mr q}}(y)$, we have $\mr r-\mr r'\ne\mr  v$.

\begin{prop}[Estimate for transversal fluctuations in periodic DLPP]
\label{prop:tail_transversal2}
For fixed positive constants $c_1< c_2$ and  $\epsilon$,  there exist positive constants $C$ and $c$ such that
\begin{equation}
\label{eq:tail_transversal2}
\prob_{\mr v}\left(\pi^{max}(\mr q)\subseteq B_{\overline{\mr 0\mr q}}(y|\mr q|^{2/3})\right)\ge 1- e^{-cy^2}
\end{equation}
for all $\mr q\in \mr Q(c_1,c_2)$ satisfying $|\mr q|\ge C$, $y$ satisfying $(\log|\mr q|)^{1/2+\epsilon}\le y \le |\mr q|^{1/3}$, and $\mr v$ satisfying $\dist (\mr v,\overline{\mr 0\mr q} )> \frac32y|\mr q|^{2/3}$. 
\end{prop}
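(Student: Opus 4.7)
The plan is to reduce Proposition~\ref{prop:tail_transversal2} to Proposition~\ref{prop:tail_transversal} via a coupling on a thinner tube where the periodic weights are already independent. Under the hypothesis $\dist(\mr v, \overline{\mr 0 \mr q}) > \tfrac{3}{2}y|\mr q|^{2/3}$, the independence criterion recorded just before the proposition gives that $\{w(\mr r)\}$ is \iid exponential on the sub-tube $\mathcal{T}_1 := B_{\overline{\mr 0 \mr q}}(\tfrac{3}{4}y|\mr q|^{2/3})$. I would couple $w$ with an \iid family $\tilde w$ on $\intZ^2$ (with associated usual DLPP passage time $\tilde G$) so that $w \equiv \tilde w$ on $\mathcal{T}_1$, and then apply Proposition~\ref{prop:tail_transversal} to $\tilde w$ with radius $\tfrac{3}{4}y|\mr q|^{2/3}$, which remains in the admissible range after shrinking $\epsilon$ slightly for large $|\mr q|$. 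This produces an event $A$ with $\prob(A) \ge 1-e^{-cy^2}$ on which $\pi^{max}_{\tilde w}(\mr q) \subseteq \mathcal{T}_1$, and hence $H(\mr q) \ge \tilde G(\mr q)$, because the $\tilde w$-optimal path uses only weights where $w=\tilde w$.

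Next I would show that on a further event of probability at least $1-e^{-cy^2}$ the periodic maximizer $\pi^{max}_w(\mr q)$ itself stays inside $\mathcal{T}_2 := B_{\overline{\mr 0 \mr q}}(y|\mr q|^{2/3})$. Following the template of the proof of Proposition~\ref{prop:tail_transversal}, I would take a union bound over the $O(|\mr q|)$ lattice points $\mr p$ neighboring the boundary strips $\Gamma^{\pm}$ of $\mathcal{T}_2$, bounding $\prob_{\mr v}(\mr p \in \pi^{max}_w(\mr q))$ by $e^{-cy^2}$ for each. On the event $\{\mr p \in \pi^{max}_w(\mr q)\}$ we have $H(\mr q) = H_{\mr 0}(\mr p) + H_{\mr p}(\mr q) - w(\mr p)$; combined with $H(\mr q) \ge \tilde G(\mr q)$ on $A$ and the monotonicity $H_{\mr 0}(\mr p) \le H(\mr r)$, $H_{\mr p}(\mr q) \le H_{\mr r'}(\mr q)$ for the same auxiliary lattice points $\mr r, \mr r'$ as in Cases~1--3 of the proof of Proposition~\ref{prop:tail_transversal}, the same chain of deterministic and tail inequalities yields the desired estimate.

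The main obstacle is transferring the upper-tail estimates of Lemma~\ref{prop:tail_estimate} from $\tilde G$ to the periodic passage times $H(\mr r)$ and $H_{\mr r'}(\mr q)$, since the relevant sub-rectangles may be wide enough to meet the $\mr v$-orbit. The key structural fact is that because $\mr v_2 < 0 < \mr v_1$, any up-right lattice path visits at most one point per $\mr v$-orbit, so the sum along any fixed path has the same marginal distribution in the periodic and \iid models. Applying the already-established Proposition~\ref{prop:tail_transversal} to the sub-rectangles confines their maximizing paths (for $\tilde w$) to thin sub-tubes whose diagonals remain at distance of order $y|\mr q|^{2/3}$ from $\mr v$; on those sub-tubes the same independence criterion applies and the coupling localizes $H(\mr r)$ and $H_{\mr r'}(\mr q)$ to the corresponding $\tilde G$ values up to an additional $e^{-cy^2}$ transversal-fluctuation error. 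The remaining deterministic comparisons then mirror Cases~1--3 of the proof of Proposition~\ref{prop:tail_transversal} verbatim.
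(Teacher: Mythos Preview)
Your coupling gives the lower bound $H(\mr q)\ge\tilde G(\mr q)$ on a good event, but the argument becomes circular at the step where you need \emph{upper}-tail control of the periodic passage times $H(\mr r)$, $H_{\mr r'}(\mr q)$ (and, in Case~3, $H(\mr p)$, $H_{\mr p}(\mr q)$). Confining the $\tilde w$-geodesic on a sub-rectangle to a thin sub-tube where the coupling applies again yields only $H\ge\tilde G$ there --- the wrong inequality for an upper tail. To obtain $H\le\tilde G$ on the sub-rectangle you would need the \emph{periodic} geodesic to stay in that sub-tube, i.e.\ a transversal-fluctuation bound for $H$ on the sub-rectangle, which is precisely the statement you are trying to prove. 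The structural fact that an up-right path meets each $\mr v$-orbit at most once fixes the marginal law of the weight-sum along any \emph{single} path, but it says nothing about the maximum over the (differently correlated) family of all paths, so it does not rescue the upper tail.

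The paper sidesteps this circularity by a different decomposition that never requires an upper-tail bound for a periodic passage time. It slices the rectangle into strips $R_j^\pm$ of width $\tfrac12 y|\mr q|^{2/3}$ parallel to $\overline{\mr 0\mr q}$, bounded by lines $\Gamma_j^\pm$. If $\pi^{max}(\mr q)$ leaves the tube of radius $y|\mr q|^{2/3}$, take the outermost level it reaches: one finds $j\ge 1$ and points $\mr p,\mr p'$ neighboring $\Gamma_j^+$ such that the portion of $\pi^{max}(\mr q)$ between them lies entirely in $R_{j}^+\cup R_{j+1}^+\cup R_{j+2}^+$ and crosses $\Gamma_{j+1}^+$. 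Since that portion is itself the (periodic) geodesic from $\mr p$ to $\mr p'$, and since the hypothesis $\dist(\mr v,\overline{\mr 0\mr q})>\tfrac32 y|\mr q|^{2/3}$ is exactly the width of three consecutive strips, the weights on $R_{j}^+\cup R_{j+1}^+\cup R_{j+2}^+$ are i.i.d.\ and the event can be bounded by the usual-DLPP transversal estimate of Proposition~\ref{prop:tail_transversal} applied to the segment $\mr p\to\mr p'$. A union bound over $j$ and over the $O(|\mr q|^2)$ pairs $\mr p,\mr p'$ (absorbed by $y\ge(\log|\mr q|)^{1/2+\epsilon}$) finishes the proof. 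The point is that by localizing to the outermost excursion, which is already trapped in a slab of width $\tfrac32 y|\mr q|^{2/3}$, one compares directly with the i.i.d.\ model and never needs to control $H$ over a full sub-rectangle.
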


\begin{proof}[Proof of Proposition~\ref{prop:tail_transversal2}]

\begin{figure}
\centering
\begin{minipage}{.4\textwidth}
\includegraphics[scale=0.3]{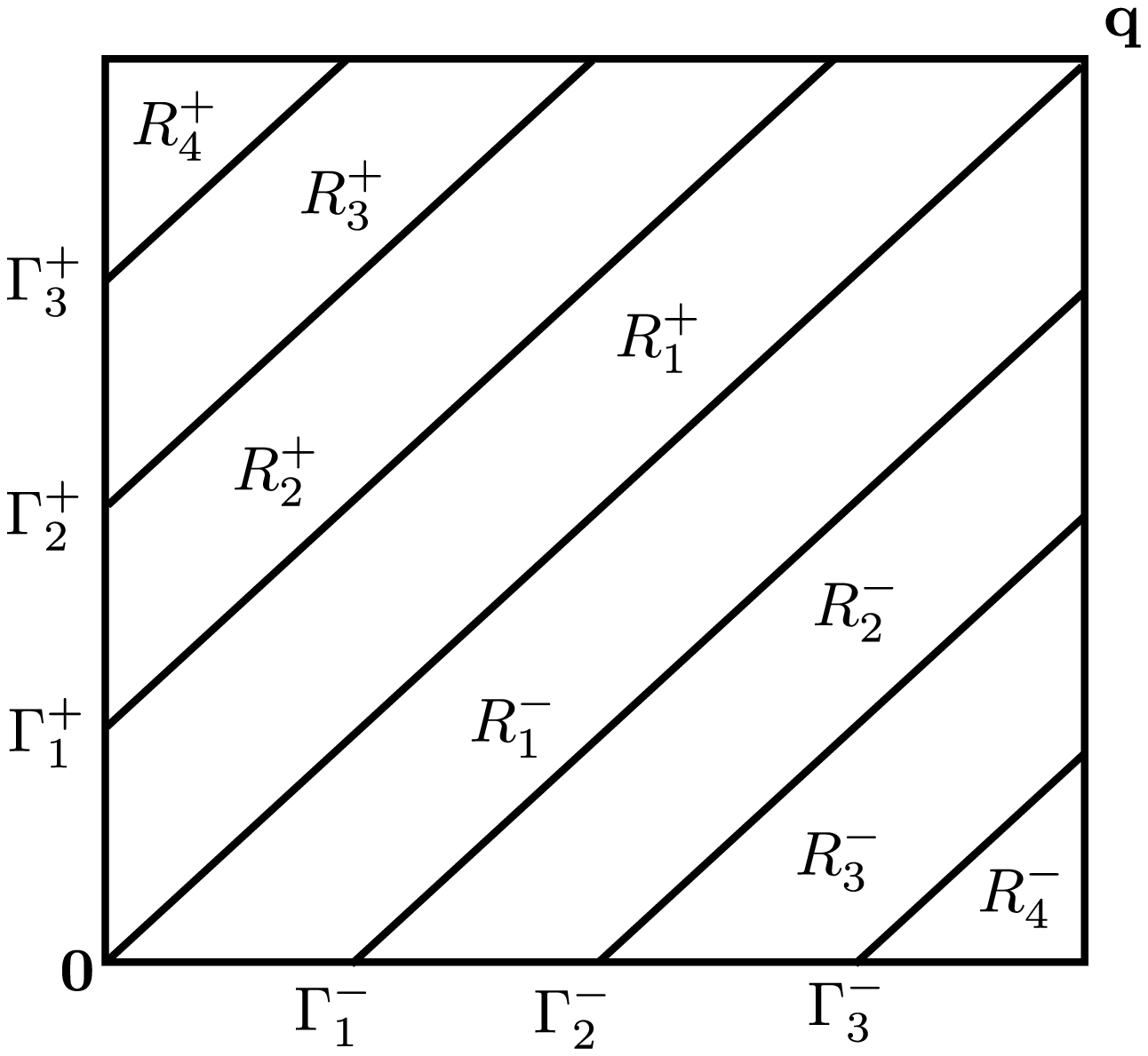}
\caption{Illustration of $\Gamma_j^\pm$ and $R_j^\pm$}
\label{fig:strip_period_DLPP_1}
\end{minipage}
\begin{minipage}{.4\textwidth}
\includegraphics[scale=0.3]{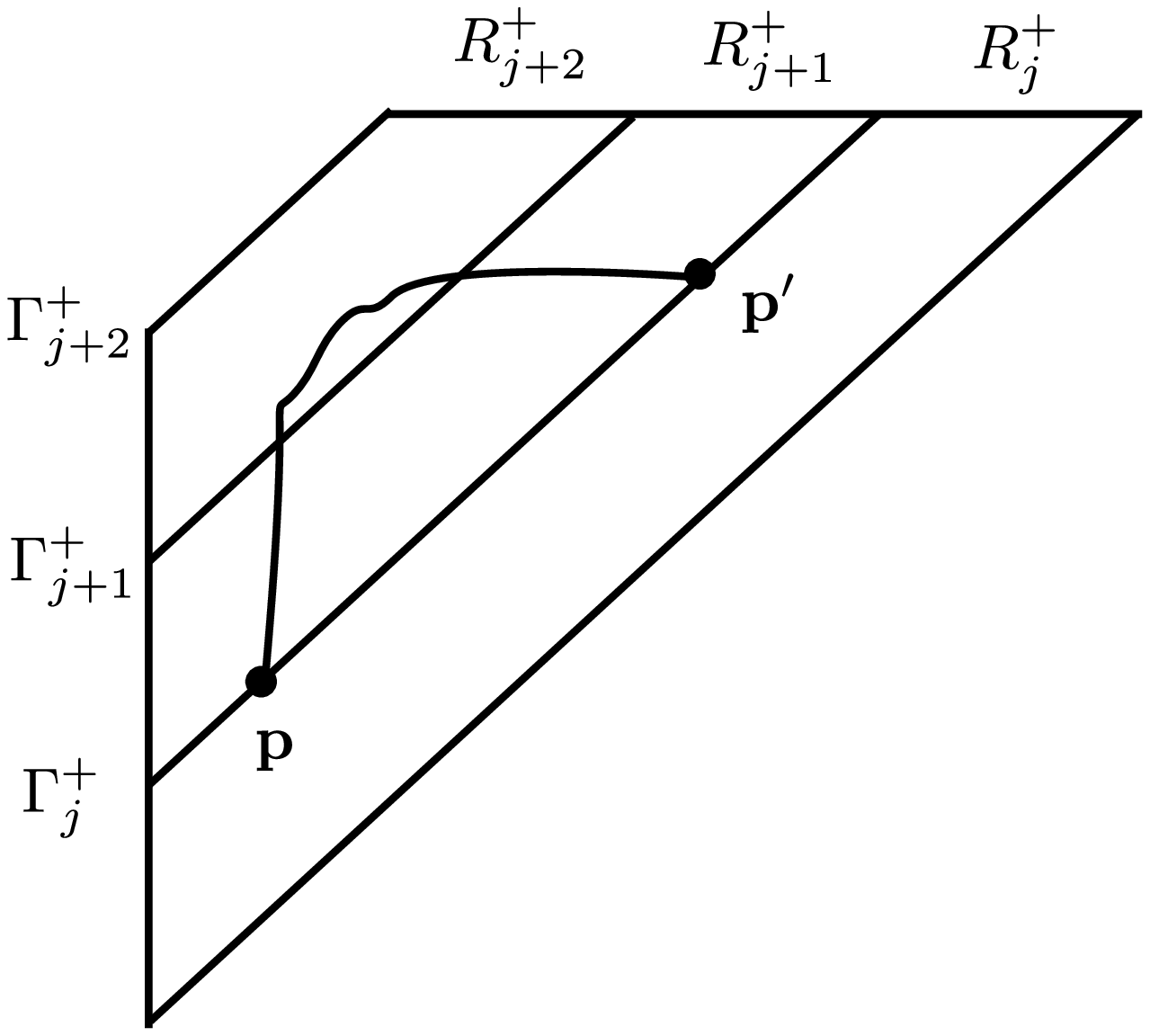}
\caption{Illustration of $\pi^{max}(\mr q)$ and the points $\mr p, \mr p'$}
\label{fig:strip_period_DLPP_2}
\end{minipage}
\end{figure}

Denote by $\Gamma$ the line segment $\overline{\mr 0\mr q}$. 
Let $R$ be the rectangle of which $\Gamma$ is the diagonal. 
We denote the two triangles in $R$ split by $\Gamma$ by $R^+$ and $R^-$ from left to right. 
Set
\begin{equation}
\Gamma_j^{\pm}:=\left\{ u\in R^\pm \, ;  \, \dist ( u,\Gamma )= \frac12jy|\mr q|^{2/3}\right\},\qquad j=1,2,\cdots. 
\end{equation}
We also set $R_j^{+}$ to be the region in $R^+$ bounded by $\Gamma_j^{+}$ and $\Gamma_{j-1}^{+}$ for each $j=1,2,\cdots$. 
We define $R_j^-$ similarly. 
Here $\Gamma_0^{\pm}=\Gamma$. See Figure~\ref{fig:strip_period_DLPP_1} for an illustration.

Note that $B_{\overline{\mr 0\mr q}}(y|\mr q|^{2/3})\cap R=R_2^+\cup R_1^+\cup R_1^-\cup R_2^-$. If $\pi^{max}(\mr q)\not\subseteq B_{\overline{\mr 0\mr q}}(y|\mr q|^{2/3})$, then it intersects  either $\Gamma_2^+$ or $\Gamma_2^{-}$. 
When $\pi^{max}(\mr q)$ intersects $\Gamma_2^+$, there exists the largest $j\ge 1$ and two leftmost lattice points $\mr p$ and $\mr p'$ which are neighboring to $\Gamma_j^\pm$, such that $\pi^{max}(\mr q)$ (1) passes $\mr p$ and $\mr p'$, (2) intersects $\Gamma_{j+1}^+$, (3) does not intersect $\Gamma_{j+2}^+$, and (4) the part of $\pi^{max}(\mr q)$ between $\mr p$ and $\mr p'$ stays in $R_{j+1}\cup R_{j+2}$. See Figure~\ref{fig:strip_period_DLPP_2} for an illustration. This implies that the maximal path from $\mr p$ to $\mr p'$ in the region $R_{j}^+\cup R_{j+1}^+\cup R_{j+2}^+$ intersects $\Gamma_{j+1}^+$. 
Note that from the condition $\frac32y|\mr q|^{2/3}<\dist(\mr v,\overline{\mr 0 \mr q})$, 
 the random variables $w(\mr p)$ are independent for $\mr p$ in 
three consecutive regions $R_j^{\pm}$. 
In particular, $w(\mr p)$ are independent for $\mr p$ in the region $R_{j}^+\cup R_{j+1}^+\cup R_{j+2}^+$, 
and hence the probability that the maximal path from $\mr p$ to $\mr p'$ in the region $R_{j}^+\cup R_{j+1}^+\cup R_{j+2}^+$ intersects $\Gamma_{j+1}^+$ is at most the probability that the maximal path from $\mr p$ to $\mr p'$ in usual DLPP intersects $\Gamma_{j+1}^+$. 
Hence, denoting by $A_j^\pm$ the set of lattice points in $R$ which are neighboring to $\Gamma_j^\pm$, 
we find from  Proposition~\ref{prop:tail_transversal} that 
\begin{equation}
\begin{split}
&\prob_{\mr v}\left(\pi^{max}(\mr q)\mbox{ intersects }\Gamma_2^+\right)\\
\le &\sum_{j\ge 1}\sum_{\mr p,\mr p'\in A_j^+} \prob_{\mr v}\left(\pi^{max}_{\mr p}(\mr p')\mbox{ stays in }R_{j}^+\cup R_{j+1}^+\cup R_{j+2}^+ \mbox{ and intersects }\Gamma_j^+\right)\\
\le &\sum_{j\ge 1}\sum_{\mr p,\mr p'\in A_j^+} \prob\left(\pi^{max}_{\mr p}(\mr p') \mbox{ intersects }\Gamma_j^+\right)\\
\le &\sum_{j\ge 1}\sum_{\mr p,\mr p'\in A_j^+} e^{-cy^2|\mr q|^{4/3}|\mr p-\mr p'|^{-4/3}} \le \sum_{j\ge 1}\sum_{\mr p,\mr p'\in A_j^+} e^{-cy^2}
\end{split}
\end{equation}
where 
the constant $c$ is independent of $\mr p,\mr p'$ and $\mr q$. 
Since there are at most $O(|\mr q|^{1/3}\cdot |\mr q|^2)$ terms in the sum and $y\ge \log(|\mr q|)^{1/2+\epsilon}$, we obtain 
\begin{equation}
\prob_{\mr v}\left(\pi^{max}(\mr q)\mbox{ intersects }\Gamma_2^+\right) \le e^{-cy^2}
\end{equation}
for a different constant $c$. Similarly the probability that $\pi^{max}(\mr q)$ intersects $\Gamma_2^-$ is bounded by $e^{-cy^2}$.
Hence, we obtain~\eqref{eq:tail_transversal2}.
\end{proof}

\subsection{Comparison between DLPP and periodic DLPP}

We now compare the last passage time in DLPP and periodic DLPP. 
We first embed both models in the same probability space so that the last passage times in two models can be compared directly.

Let $\mr v$ be a lattice point. Suppose $\mr R$ is a set of lattice points such that
\begin{equation}
\label{eq:aux_2016_08_21_01}
\left(\mr R+\mr v\right)\cap\mr R=\emptyset, \qquad  \intZ^2=\cup_{i\in\intZ} (\mr R+i\mr v).
\end{equation}
Consider the probability space in which every lattice point $\mr p$ is assigned with an \iid exponential random variables $w(\mr p)$. 
We define new variables $\tilde w(\mr r+i\mr v)= w(\mr r)$ for all $\mr r\in\mr R$ and $i\in\intZ$.
The assumptions on $\mr R$ imply that that $\tilde w(\mr p)$ is well-defined, is defined for all $\mr p\in\intZ$, and  satisfies the periodicity $\tilde w(\mr p)=\tilde w(\mr p+\mr v)$ for all $\mr p\in\intZ^2$. 
Let  $G_{\mr p}(\mr q)$ and $H^{(\mr R)}_{\mr p}(\mr q)$ be the last passage times from $\mr p$ to $\mr q$ with respect to the weights $w$ and $\tilde w$, respectively. 
Here we put an index $\mr R$ in $H$ in order to indicate the dependence on the choice of $\mr R$. 
As before, we write $H^{(\mr R)}(\mr q)=H^{(\mr R)}_{\mr 0}(\mr q)$ and $G(\mr q)=G_{\mr 0}(\mr q)$.

We have the following result. 

\begin{prop}
	\label{prop:comparison_DLPP_periodic_DLPP}
	Let $c_1< c_2$ and $\lambda$ be all fixed positive constants. 
	Then there exist positive constants $C$ and $c$ such that
	\begin{equation}
	\label{eq:aux_2016_08_20_01}
	\prob\left(\bigcap_{|\mr q'-\mr q|\le \lambda|\mr q|^{2/3}}\left\{H^{(\mr R)}(\mr q')= G(\mr q')\right\}\right)\ge 1- e^{-c|\mr v|^2|\mr q|^{-4/3}}
	\end{equation}
	for all $\mr q\in\mr Q(c_1,c_2)$ such that $|\mr q|\ge C$, and for all $\mr R=\intZ^2\cap \left\{x\mr v+y\mr q ; -1/2<x\le 1/2, y\in\realR\right\}$
	where 
	$\mr v=(\mr v_1,\mr v_2)$ is any point in $\intZ^2$ satisfying $\mr v_2<0<\mr v_1$ and $|\mr v|\ge |\mr q|^{2/3}\left(\log |\mr q|\right)^{1/2+\epsilon}$.
\end{prop}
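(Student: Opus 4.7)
The plan is to exhibit a single high-probability event on which, for every $\mr q'$ in the box $\{|\mr q'-\mr q|\le \lambda|\mr q|^{2/3}\}$, the maximal paths from $\mr 0$ to $\mr q'$ under both the independent weights $w$ and the periodic weights $\tilde w$ lie inside the fundamental strip $\mr R$. On this event, $w$ and $\tilde w$ coincide along every such path, so summing along the respective maximizers gives $G(\mr q')\le H^{(\mr R)}(\mr q')$ and $H^{(\mr R)}(\mr q')\le G(\mr q')$, hence $G(\mr q')=H^{(\mr R)}(\mr q')$ for all admissible $\mr q'$ simultaneously.

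The next step is to choose the transversal-fluctuation parameter $y:=c_0|\mr v||\mr q|^{-2/3}$ with $c_0>0$ a small constant depending only on $c_1,c_2,\lambda$. The hypothesis $|\mr v|\ge |\mr q|^{2/3}(\log|\mr q|)^{1/2+\epsilon}$ forces $y\ge (\log|\mr q|)^{1/2+\epsilon}$, so Propositions~\ref{prop:tail_transversal} and~\ref{prop:tail_transversal2} apply to each $\mr q'$ in the box and give that the corresponding maximal path lies in $B_{\overline{\mr 0\mr q'}}(y|\mr q'|^{2/3})$ with probability at least $1-e^{-cy^2}$. A union bound over the $O(|\mr q|^{4/3})$ lattice points $\mr q'$ yields the simultaneous containment with probability at least $1-O(|\mr q|^{4/3})e^{-cy^2}$. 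Substituting $y^2=c_0^2|\mr v|^2|\mr q|^{-4/3}$ and using $|\mr v|^2|\mr q|^{-4/3}\ge (\log|\mr q|)^{1+2\epsilon}$ to absorb the polynomial factor into the exponential yields the claimed bound $1-e^{-c|\mr v|^2|\mr q|^{-4/3}}$, after decreasing $c$.

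The hard part will be the geometric bookkeeping. Both the containment $B_{\overline{\mr 0\mr q'}}(y|\mr q'|^{2/3})\subseteq \mr R$ and the hypothesis $\dist(\mr v,\overline{\mr 0\mr q'})>\tfrac{3}{2}y|\mr q'|^{2/3}$ of Proposition~\ref{prop:tail_transversal2} reduce to requiring $y|\mr q|^{2/3}$ to be a small enough multiple of $|\mr v|\sin\theta$, where $\theta$ is the angle between $\mr v$ and $\mr q$. Since $\mr q\in\mr Q(c_1,c_2)$ confines the polar angle of $\mr q$ to $[\arctan c_1,\arctan c_2]\subset (0,\pi/2)$ while $\mr v_2<0<\mr v_1$ places the polar angle of $\mr v$ in $(-\pi/2,0)$, the angle $\theta$ lies in a subinterval of $(0,\pi)$ bounded away from $0$ and $\pi$, so $\sin\theta$ is bounded below by a constant depending only on $c_1,c_2$. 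The perturbation from $\mr q$ to $\mr q'$ shifts $\overline{\mr 0\mr q'}$ by $O(\lambda|\mr q|^{2/3})$ perpendicular to $\overline{\mr 0\mr q}$, which is absorbed into $|\mr v|\sin\theta$ for large $|\mr q|$ by the growth assumption on $|\mr v|$, so the single constant $c_0$ works uniformly in $\mr v$ and $\mr q$.
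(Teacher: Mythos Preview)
Your proof is correct and follows essentially the same approach as the paper: apply Propositions~\ref{prop:tail_transversal} and~\ref{prop:tail_transversal2} with $y$ proportional to $|\mr v||\mr q|^{-2/3}$, then take a union bound over the $O(|\mr q|^{4/3})$ points $\mr q'$ and absorb the polynomial prefactor using $|\mr v|^2|\mr q|^{-4/3}\ge (\log|\mr q|)^{1+2\epsilon}$. The paper's proof is extremely terse on the geometric verification (that the tubes $B_{\overline{\mr 0\mr q'}}(y|\mr q'|^{2/3})$ sit inside $\mr R$ and that the hypothesis $\dist(\mr v,\overline{\mr 0\mr q'})>\tfrac32 y|\mr q'|^{2/3}$ of Proposition~\ref{prop:tail_transversal2} holds), whereas you correctly spell this out via the uniform lower bound on $\sin\theta$; one small point you leave implicit is that when $|\mr v|$ is of order $|\mr q|$ or larger the upper constraint $y\le |\mr q'|^{1/3}$ in Propositions~\ref{prop:tail_transversal}--\ref{prop:tail_transversal2} may fail, but in that regime the rectangle spanned by $\mr 0$ and $\mr q'$ already lies inside $\mr R$ and the conclusion holds trivially.
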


\begin{proof}
	Using Propositions~\ref{prop:tail_transversal} and~\ref{prop:tail_transversal2}, we have\begin{equation}
	\prob\left(H^{(\mr R)}(\mr q')\ne  G(\mr q')\right)\le e^{-c|\mr v|^2|\mr q|^{-4/3}}
	\end{equation}
	for all $\mr q, \mr q'\in\intZ^2$ satisfying $|\mr q|\ge C$ and $|\mr q'-\mr q|\le \lambda|\mr q|^{2/3}$. Here the constants $C$ and $c$ are independent of $\mr q$ and $\mr q'$. Since there are only $O(|\mr q|^{4/3})$ such lattice points $\mr q'$, and $|\mr v|^2|\mr q|^{-4/3}\ge (\log|\mr q|)^{1+2\epsilon}$, we obtain~\eqref{eq:aux_2016_08_20_01} (with different $C$ and $c$).
\end{proof}

Clearly, for arbitrary $\mr R$ satisfying the conditions~\eqref{eq:aux_2016_08_21_01}, we have
\begin{equation}
\prob_{\mr v}(H_{\mr p}(\mr q)=k)=\prob\left(H^{(\mr R)}_{\mr p}(\mr q)=k\right)
\end{equation}
for all lattice points $\mr p$ and $\mr q$, and all $k\in\intZ_{\ge 1}$. 
As a corollary of Proposition~\ref{prop:comparison_DLPP_periodic_DLPP}, we have 
\begin{equation}
\label{eq:aux_2016_08_21_02}
\prob_{\mr v}\left( \frac{H(\mr q)-d(\mr q)}{s(\mr q)} \le x\right) \to \FGUE(x) 
\end{equation}
for each $x\in \realR$ 
as $\mr q\in \mr Q(c_1,c_2)$ satisfies $|\mr q|\to \infty$ and $\mr v\ge |\mr q|^{2/3}(\log|\mr q|)^{1/2+\epsilon}\to \infty$.

\section{Proof of theorems} 
\label{sec:proof}

\subsection{Map from periodic TASEP  to periodic DLPP}

The standard map from infinite TASEP to the usual DLPP extends easily to a map from periodic TASEP  to periodic DLPP, which we explain now. 
Let $\mr v=(L-N,-N)$. This vector will represent the period of  the periodic DLPP. 
The initial condition of the periodic TASEP gives rise to the boundary path of the periodic DLPP as follows. 
Given the initial condition $x_k(0)$ of the periodic TASEP, let $\Lambda$ be the lattice path in $\intZ^2$  defined by the set of points $u=(u_1,u_2)\in\intZ^2$ satisfying either 
\begin{equation}
i+1+x_{N-i}(0)\le u_1\le i+x_{N+1-i}(0), \qquad u_2=i+1
\end{equation}
or
\begin{equation}
u_1=i+x_{N+1-i}(0), \quad i\le u_2\le i+1
\end{equation}
for some $i\in\intZ$. 
Then $\Lambda$ is a lattice path 
whose lower-left corners are $\left(i+x_{N+1-i}(0), i\right),\ i\in\intZ$.
It is invariant under the translation by $\mr v$, i.e., $\Lambda+\mr v=\Lambda$. 
Especially, for the periodic step initial condition $\Lambda$ is a staircase shape lattice path with lower left corners $\mr c_i:=\left(1,1\right)+i\mr v$, and for the flat initial condition $\Lambda$ is a ``flat''  lattice path which consists of consecutive vertical and horizontal line segments with length $1$ and $\rho^{-1}-1$ respectively. See Figures~\ref{fig:periodic_TASEP_boundaries_1} and~\ref{fig:periodic_TASEP_boundaries_2} for an illustration.

\begin{figure}
\centering
\begin{minipage}{.4\textwidth}
\includegraphics[scale=0.2]{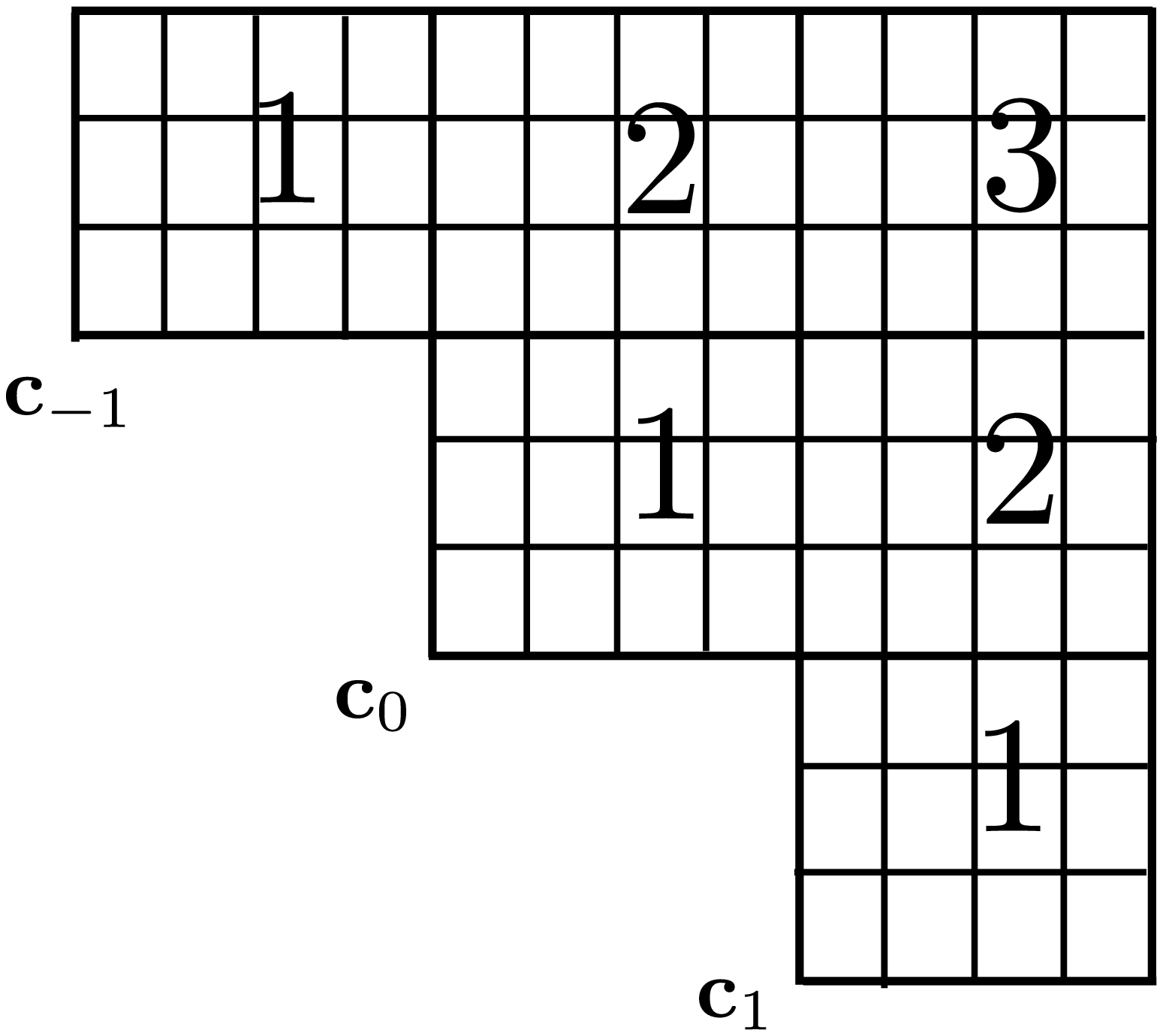}
\caption{The periodic DLPP corresponding to the periodic TASEP 
	with step initial condition. Here $L=7$ and $N=3$. The blocks with the same numbers are identical.}
\label{fig:periodic_TASEP_boundaries_1}
\end{minipage}
\quad
\begin{minipage}{.4\textwidth}
\includegraphics[scale=0.2]{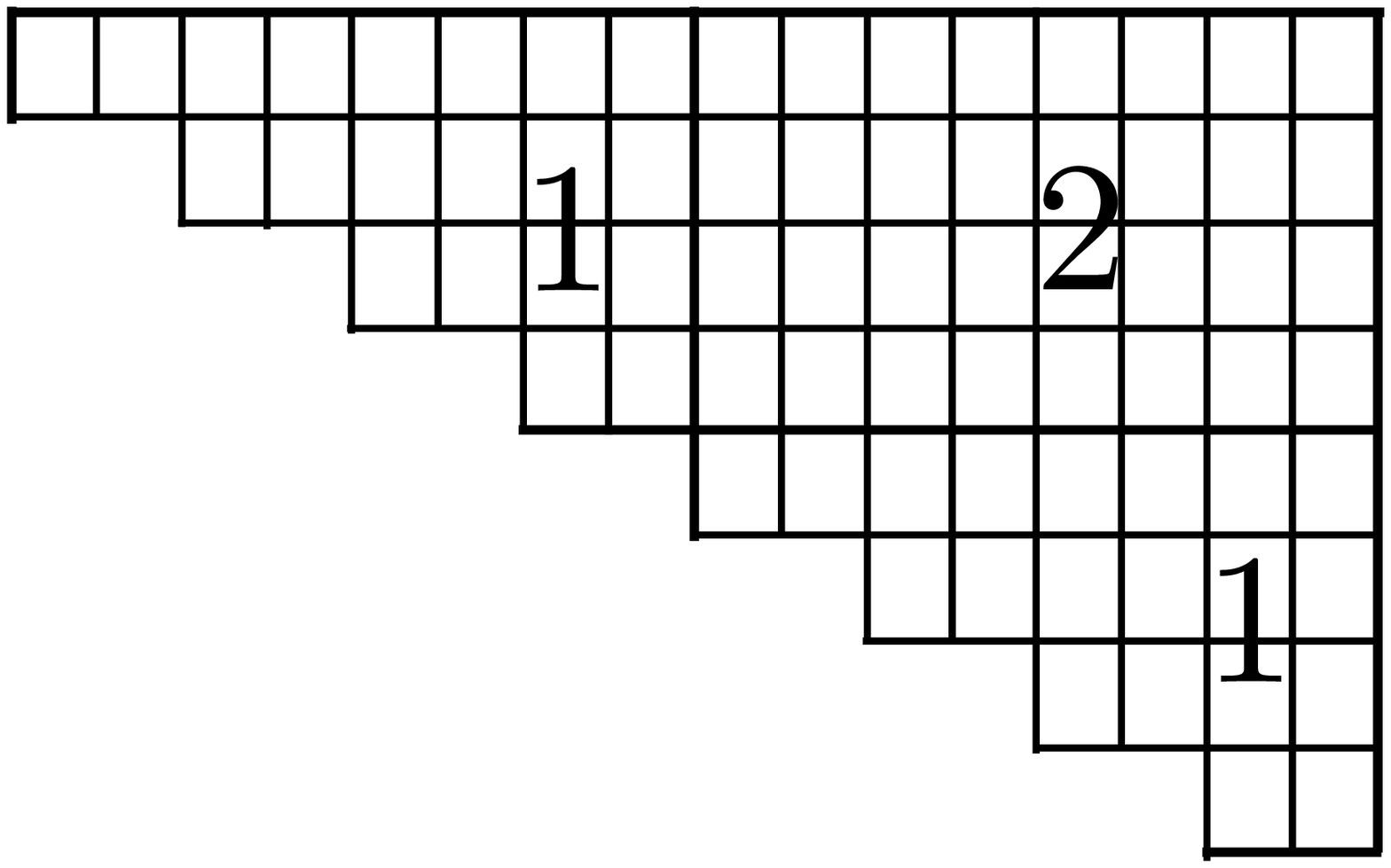}
\caption{The periodic DLPP corresponding to the periodic TASEP 
	 with flat initial condition. Here $L=12$ and $N=4$. The blocks with the same numbers are identical.}
\label{fig:periodic_TASEP_boundaries_2}
\end{minipage}
\end{figure}

We define the random variables $w(i,j)$ associated to site $(i,j)$ by the waiting time of the $(N+1-j)$-th particle, after the right neighboring site becomes empty, stays at the site $i-j$. 
Then $w$ satisfies $w(\mr p)=w(\mr p+\mr v)$ for all $\mr p$ on the upper right side of the boundary path $\Lambda$. See Figures~\ref{fig:periodic_TASEP_boundaries_1} and~\ref{fig:periodic_TASEP_boundaries_2}. 
We set $w(\mr p)=-\infty$ for $\mr p$ on the lower left side of $\Lambda$.


If we remove the restrictions $w(\mr q)=w(\mr v+\mr q)$ in the above setting and suppose $w(\mr q)$ are \iid for all $\mr q$ on $\Lambda$ or at the upper right side of $\Lambda$, then we obtain the usual DLPP with the boundary $\Lambda$. Here we assume $\Lambda$ is the same lattice path defined by the initial condition of the periodic TASEP. Then this new DLPP corresponds to the infinite TASEP with the same initial condition. Therefore in terms of DLPP, the difference between periodic TASEP and infinite TASEP 
is the periodicity  $w(\mr p)=w(\mr p+\mr v)$.
\vspace{0.4cm}

The relation between $x_j(t)$ in the infinite TASEP and the line-to-point last passage time of the usual DLPP is well-known. 
Define the line-to-point last passage time in the usual DLPP model as
\begin{equation}
\label{eq:aux_2016_08_17_01}
G_{\Lambda}(\mr q) =\max_{\mr p\in\Lambda} G_{\mr p}(\mr q).
\end{equation}
Then, for integer $a$ satisfying $a\ge x_k(0)$, 
\begin{equation}
\label{eq:aux_2016_08_18_02}
x_k(t)\ge a \mbox { in the infinite TASEP}\Longleftrightarrow G_\Lambda(\mr q)\le t \mbox{ in the corresponding DLPP},
\end{equation}
where
\beq \label{eq:qdefhr}
\mr q=(N+a-k,N+1-k).
\eeq
For the periodic model, the relation is  same. 
We set 
\begin{equation}
H_{\Lambda}(\mr q)=\max_{\mr p\in\Lambda} H_{\mr p}(\mr q)
\end{equation}
where $H_{\mr p}(\mr q)$ is defined in~\eqref{eq:aux_017}.
Then, for integer $a$ satisfying $a\ge x_k(0)$, 
\begin{equation}
\label{eq:aux_018}
x_k(t)\ge a \mbox{ in the periodic TASEP}\Longleftrightarrow H_\Lambda(\mr q)\le t \mbox{ in the corresponding periodic DLPP},
\end{equation}
where $\mr q$ is defined in~\eqref{eq:qdefhr}.

We use~\eqref{eq:aux_2016_08_18_02} and~\eqref{eq:aux_018} to prove our main theorems in the next three subsections. The proofs of some technical lemmas are postponed to Section~\ref{sec:others}.
We will show that the sub-relaxation time scale implies that Proposition~\ref{prop:tail_transversal2} is applicable. 
Hence under the sub-relaxation time scale, the last passage time in the periodic DLPP has the same 
distribution as the last passage time in the usual DLPP with high probability. See Proposition~\ref{prop:comparison_DLPP_periodic_DLPP}. 
Hence in the leading order, we have formally
\begin{equation}
H_{\mr p}(\mr q) \approx G_{\mr p}(\mr q)\approx d(\mr q-\mr p).
\end{equation} 
For the periodic step initial condition, the geometry of the boundary path $\Lambda$ implies that 
\begin{equation}
	d(\mr q-\mr p) =  \max_{i} d(\mr q- \mr c_i), 
\end{equation}
where $\mr c_i:=\left(1,1\right)+i\mr v$ are the lower-left corners of $\Lambda$. 
It is a simple calculation to check which $i$ gives the largest contribution using the explicit formula of the function $d$. 
We find that for each $i$, 
there is a curve given by the set of points $\mr q$ such that $d(\mr q- \mr c_i)= d(\mr q- \mr c_{i+1})$.
If $\mr q$ is away from these curves,  there is unique maximizer $i$. 
For $\mr q$ on a curve, there are two maximizers. 
These are illustrated in Figure~\ref{fig:periodic_lpp4}. 
In terms of periodic TASEP and infinite TASEP, the curves correspond to the the space-time trajectory of the shocks. 
The maximizing indices are $i=j_N$  for Theorem~\ref{thm:limiting_process_step} (away from shock) and  and $i=j_N, j_N-1$ for Theorem~\ref{thm:limiting_process_step_shock} (near the shock). 
Then formally
\begin{equation}
H_{\Lambda}(\mr q)\approx G_{\Lambda}(\mr q)\approx d(\mr q-c_{j_N})
\end{equation}
for the leading order.
We show that $H_{\Lambda}(\mr q)\approx G_{\Lambda}(\mr q)$ even for the fluctuation term. 
The fluctuation term is different  for $\mr q$ away from or near the shock curves. 
 The flat initial condition case is simpler. 

\begin{figure}
\centering
\includegraphics[scale=0.5]{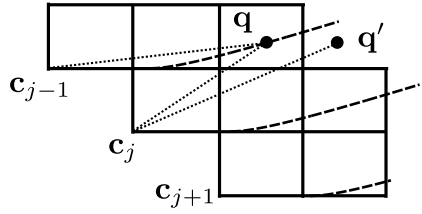}
\caption{
Illustration of the maximizers: $\mr q$ is on a dashed curve and there are two maximizers $d(\mr q-\mr c_{j-1})\approx d(\mr q-\mr c_{j})$.  $\mr q'$ is away from the dashed curves, and there is a unique maximizer $d(\mr q'-\mr c_j)$.}
\label{fig:periodic_lpp4}
\end{figure}

\subsection{Proof of Theorem~\ref{thm:limiting_process_flat}}

For the flat initial condition~\eqref{eq:flat_ic}, 
the  boundary  path  $\Lambda$ has lower-left corners 
\beq \tilde{\mr c}_i:=((1-\rho^{-1})i+\rho^{-1}(N+1),\, i), \qquad i\in\intZ.
\eeq
By assumption, $t=t_N$ is a sequence satisfying $t_N\le CN^{3/2-\epsilon}$ and $\lim_{N\to\infty}t_N=\infty$. To prove~\eqref{eq:limiting_process_flat}, we need to show that for any fixed $k\in\intZ_{\ge 1}$ and $u_1,\cdots,u_k\in\realR$, and $x_1,\cdots,x_k\in\realR$, 
\begin{equation}
\label{eq:aux_2016_08_18_01}
\begin{split}
	&\lim_{N\to\infty}\prob_{\mr v}\left(\bigcap_{j=1}^k\left\{x_{[\kappa_1 u_jt_N^{2/3}]}(t_N)\ge \left[(1-\rho)t_N+\rho^{-1}\kappa_1u_j t_N^{2/3}-\sigma_1x_jt_N^{2/3}\right]\right\}\right)
	=\prob\left(\bigcap_{j=1}^k\{\mathcal{A}_1(u_j)\le x_j\}\right)
\end{split}
\end{equation}
in the periodic TASEP model. Here we use the same notation $\prob_{\mr v}$ as in periodic DLPP to denote the probability measure for the periodic TASEP. For notational convenience, we assume that $\kappa_1u_jt_N^{2/3}$ and $(1-\rho)t_N+\rho^{-1}\kappa_1u_jt_N^{2/3}-\sigma_1x_jt_N^{2/3}$ for $j=1,\cdots,k$ are all integers; this assumption does not affect our proof since $O(1)$ perturbations in these terms do not change our argument below. 
Noting~\eqref{eq:aux_018} and~\eqref{eq:qdefhr}, we define the following subset of $\intZ^2$: 
\begin{equation}
\mr S=\left\{\left( N+ (1-\rho)t_N+(\rho^{-1}-1)\kappa_1u_jt_N^{2/3}-\sigma_1x_jt_N^{2/3}, \, N+1-\kappa_1u_jt_N^{2/3}\right) \in \intZ^2 ; \, j=1,\cdots,k\right\} .
\end{equation}
From~\eqref{eq:aux_018}, Theorem~\ref{thm:limiting_process_flat} follows if we show that 
\begin{equation}
\label{eq:aux_047}
\lim_{N\to\infty}\prob_{\mr v}\left(\bigcap_{\mr q\in\mr S}\{H_\Lambda(\mr q)\le t_N\}\right)=\prob\left(\bigcap_{j=1}^k\{\mathcal{A}_1(u_j)\le x_j\}\right)
\end{equation}

On the other hand, the infinite TASEP with the same flat initial condition satisfies the result~\eqref{eq:aux_2016_08_18_01}  (with the subscript $\mr v$ removed)
\cite{Sasamoto05, Borodin-Ferrari-Prahofer-Sasamoto07} (see the footnote~\ref{ft:flat_TASEP} under the discussions of Theorem~\ref{thm:limiting_process_flat}). 
In terms of DLPP, it means that 
\begin{equation}
\label{eq:aux_2016_08_18_03}
\lim_{N\to\infty}\prob\left(\bigcap_{\mr q\in\mr S}\{G_\Lambda(\mr q)\le t_N\}\right)=
\prob\left(\bigcap_{j=1}^k\{\mathcal{A}_1(u_j)\le x_j\}\right)
\end{equation}
which is a DLPP analog of~\eqref{eq:aux_047}. 
We prove Theorem~\ref{thm:limiting_process_flat} by showing that the left hand sides of~\eqref{eq:aux_047} and~\eqref{eq:aux_2016_08_18_03} are equal. 

Define an index set
\beq
	I:=\{i\in\intZ \, ; \, -[N/4]<N-i-\rho^2 t_N \le [N/4]\}.
\eeq
The next  lemma shows that the main contribution on the left hand sides of~\eqref{eq:aux_047} and~\eqref{eq:aux_2016_08_18_03} comes from the indices in $I$. The proof of this lemma is given in Section~\ref{sec:proof_Lemma_flat_1}.

\begin{lm}
\label{lm:flat_trivial}
For any $j\in\intZ$ which is not in $I$, and any $\mr q\in\mr S$, we have
\begin{equation}
\label{eq:aux_051}
\prob \left(G_{\tilde{\mr c}_j}(\mr q) >\max_{i\in I }G_{\tilde{\mr c}_i }(\mr q)\right) <e^{-t_N^{c\epsilon}},\qquad \prob_{\mr v}\left(H_{\tilde{\mr c}_j}(\mr q) >\max_{i\in I }H_{\tilde{\mr c}_i }(\mr q)\right) <e^{-t_N^{c\epsilon}}
\end{equation}
for large enough $N$, where   $c$ is a constant independent of $j$,  $t_N$ and $N$, and $\epsilon$ is the constant defined in Theorem~\ref{thm:limiting_process_flat}. 
\end{lm}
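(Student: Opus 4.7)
My plan is to reduce~\eqref{eq:aux_051} to a sharp deterministic gap for $d(\mr q - \tilde{\mr c}_j)$ combined with the tail bounds of Lemma~\ref{prop:tail_estimate}. Writing $m = N - j$, direct algebra gives $q_1 - c_{j,1} = (1-\rho)t_N - (\rho^{-1}-1)m + O(t_N^{2/3})$ and $q_2 - c_{j,2} = m + O(t_N^{2/3})$, and the computation $d''(m) = -((1-\rho)t_N)^2/(2((q_1-c_{j,1})(q_2-c_{j,2}))^{3/2})$ shows that $m\mapsto d(\mr q - \tilde{\mr c}_{N-m})$ is strictly concave with $|d''(m)| \ge c_\rho / t_N$ uniformly on its natural domain, for some $c_\rho > 0$ depending only on $\rho$. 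The continuous maximizer $\hat m$ differs from $m^\star := \rho^2 t_N$ by only $O(t_N^{2/3}) \le CN^{1-2\epsilon/3}$, which is $\ll N/4$ for large $N$; consequently the nearest integer index $\hat{\jmath} := N - [\hat m]$ lies inside $I$. For any $j\notin I$, the triangle inequality then gives $|m - \hat m| \ge N/8$, and uniform concavity yields the deterministic gap
\[
D_j := d(\mr q - \tilde{\mr c}_{\hat{\jmath}}) - d(\mr q - \tilde{\mr c}_j) \ge \frac{c_\rho}{2t_N}(m - \hat m)^2 \ge c'\, N^{1/2+\epsilon},
\]
using $t_N \le CN^{3/2-\epsilon}$ in the last step.

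For the infinite DLPP half of~\eqref{eq:aux_051}, split
\[
\{G_{\tilde{\mr c}_j}(\mr q) > \max_{i\in I} G_{\tilde{\mr c}_i}(\mr q)\} \subseteq \{G_{\tilde{\mr c}_j}(\mr q) - d(\mr q - \tilde{\mr c}_j) \ge D_j/2\} \cup \{d(\mr q - \tilde{\mr c}_{\hat{\jmath}}) - G_{\tilde{\mr c}_{\hat{\jmath}}}(\mr q) \ge D_j/2\}
\]
and translate each event to the origin. For $j$ with $\mr q - \tilde{\mr c}_j$ inside some fixed cone $\mr Q(c_1,c_2)$, Lemma~\ref{prop:tail_estimate} bounds the two probabilities by $e^{-cD_j/s}$ and $e^{-c(D_j/s)^{3/2}}$ respectively, where $s \le O(t_N^{1/3}) \le O(N^{1/2-\epsilon/3})$; hence $D_j/s \ge cN^{4\epsilon/3}$ and both estimates are $\le e^{-t_N^{c\epsilon}}$ for sufficiently small $c>0$ (using $t_N \le N^{3/2}$ so that $t_N^{c\epsilon}\le N^{3c\epsilon/2}\le N^{4\epsilon/3}$). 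The remaining "degenerate" indices—those with aspect ratio of $\mr q - \tilde{\mr c}_j$ exiting every fixed cone—satisfy $d(\mr q - \tilde{\mr c}_j) \le (1-c_\rho)t_N$ directly, making $D_j$ itself of order $t_N$, which dwarfs any enlarged fluctuation scale and yields an even stronger exponential bound, either via the analogue of Lemma~\ref{prop:tail_estimate} with the correct scale for unbalanced rectangles or by monotonic domination onto a balanced one.

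For the periodic half of~\eqref{eq:aux_051}, I invoke Proposition~\ref{prop:comparison_DLPP_periodic_DLPP}. The period $|\mr v| = \Theta(N)$ satisfies $|\mr v| \ge |\mr q|^{2/3}(\log|\mr q|)^{1/2+\epsilon}$ because $|\mr q|^{2/3} \le O(N^{1-2\epsilon/3})$, so the proposition (after translating its origin to $\tilde{\mr c}_j$ and to $\tilde{\mr c}_{\hat{\jmath}}$, using a suitable fundamental domain in each case) yields $H_{\tilde{\mr c}_j}(\mr q) = G_{\tilde{\mr c}_j}(\mr q)$ and $H_{\tilde{\mr c}_{\hat{\jmath}}}(\mr q) = G_{\tilde{\mr c}_{\hat{\jmath}}}(\mr q)$ simultaneously, with probability at least $1 - e^{-cN^{4\epsilon/3}} \ge 1 - e^{-t_N^{c\epsilon}}$. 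On this event the periodic bound reduces to the infinite one established above. The main obstacle I foresee is the uniform treatment of the degenerate-aspect-ratio indices: Lemma~\ref{prop:tail_estimate} and Proposition~\ref{prop:comparison_DLPP_periodic_DLPP} are both stated for $\mr Q(c_1,c_2)$, so one must invoke their analogues with the correct fluctuation scale $s$ (following the same steepest-descent analysis as in the references cited for Lemma~\ref{prop:tail_estimate}) for the unbalanced rectangles—though the enormous gap $D_j = \Theta(t_N)$ in that regime makes the quantitative estimates lenient once the right tail bound is in hand.
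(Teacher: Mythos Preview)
Your proposal is correct and follows essentially the same route as the paper: compute a deterministic gap $d(\mr q-\tilde{\mr c}_{i^*})-d(\mr q-\tilde{\mr c}_j)\gtrsim N^2/t_N$ for $j\notin I$, then feed this into the exponential tail bounds. The paper obtains the gap via the algebraic identity $\rho\,d(\mr q-\tilde{\mr c}_i)=\rho t-(\sqrt{\rho^2 t-\rho(N-i)}-\sqrt{(1-\rho)(N-i)})^2+o(N)$, while you obtain it via concavity of $m\mapsto d(\mr q-\tilde{\mr c}_{N-m})$; both give the same quantitative bound. The paper then applies the packaged comparison Lemma~\ref{lm:comparison_DLPP} (which already treats the DLPP and periodic DLPP cases simultaneously via Proposition~\ref{prop:comparison_DLPP_periodic_DLPP} and Lemma~\ref{prop:tail_estimate}), whereas you unpack these ingredients by hand; using Lemma~\ref{lm:comparison_DLPP} directly would shorten your write-up. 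Two small remarks: your explicit formula for $d''(m)$ omits a term (the full expression has two negative summands), though your lower bound $|d''|\ge c_\rho/t_N$ is correct since $\sqrt{(\mr q_1-\tilde c_{j,1})(\mr q_2-\tilde c_{j,2})}\le O(t_N)$ throughout the domain; and your treatment of the degenerate-aspect-ratio indices via monotone domination onto a balanced rectangle is the right fix---the paper is equally brief on this point, and the same resolution applies there.
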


By Propositions~\ref{prop:tail_transversal} and~\ref{prop:tail_transversal2}, the maximal paths from the lattice points $\{\tilde{\mr c}_i; i\in I\}$ to $\mr q$ in both periodic DLPP and usual DLPP are concentrated with high probability in a strip whose vertical length is $N/2+O(t_N^{2/3+\epsilon''})$, where $\epsilon''>0$ is a constant such that $t_N^{2/3+\epsilon''}\ll N/2$. In this strip all the entries are \iid for both models. Therefore we have
\begin{equation}
\prob_{\mr v}\left(\bigcap_{\mr q\in\mr S}\{\max_{i\in I}H_{\tilde{\mr c}_i }(\mr q)\le t_N\}\right)-\prob\left(\bigcap_{\mr q\in\mr S}\{\max_{i\in I}G_{\tilde{\mr c}_i }(\mr q)\le t_N\}\right)\to 0
\end{equation} 
as $N\to\infty$. Together with Lemma~\ref{lm:flat_trivial}, we proved that the left hand sides of~\eqref{eq:aux_047} and~\eqref{eq:aux_2016_08_18_03} are equal.


\subsection{Proof of Theorem~\ref{thm:limiting_process_step}}
\label{sec:proof_theorem_step}

For the periodic step initial condition~\eqref{eq:step_ic}, the boundary path  $\Lambda$ has lower-left corners 
\beq
\label{eq:aux_2016_08_15_03}
\mr c_i=(1,1)+i\mr v= (1+i(L-N),1-iN), \qquad i\in\intZ.
\eeq
For the infinite TASEP, we need to show that for any fixed $k\in\intZ_{\ge1}$ and $u_1,\cdots,u_k\in\realR$, and $x_1,\cdots,x_k\in\realR$, 
\begin{multline}
\label{eq:aux_2016_08_09_01}
	\lim_{N\to\infty}\prob\left(\bigcap_{i=1}^k\left\{x_{[\alpha N+\kappa_2 u_it_N^{2/3}]}(t_N)\ge \left[(1-2\mu)t_N+\rho^{-1}j_NN+\mu^{-1}\kappa_2u_it_N^{2/3}-\sigma_2x_it_N^{1/3}\right]\right\}\right)\\
	=\prob\left(\bigcap_{j=1}^k\left\{\mathcal{A}_2(u_j)-u_j^2\le x_j\right\}\right).
\end{multline}
For notational convenience, we again  assume 
that $\alpha N+\kappa_2u_it_N^{2/3}$ and $(1-2\mu)t_N+\rho^{-1}j_NN+\mu^{-1}\kappa_2u_it_N^{2/3}-\sigma_2x_it_N^{1/3}$ are integers for all $i=1,\cdots,k$. 
Let $\mr S$ denote the set of lattice points 
\begin{equation}
\label{eq:aux_2016_08_15_01}
	\left((1-2\mu)t_N+(\rho^{-1}j_N+1-\alpha)N-(1-\mu^{-1})\kappa_2u_it_N^{2/3}-\sigma_2x_it_N^{1/3}, 
	\, (1-\alpha)N-\kappa_2 u_it_N^{2/3}+1\right) 
\end{equation}
where $ i=1,\cdots,k$.
The result~\eqref{eq:aux_2016_08_09_01} follows if we show
\begin{equation}
\label{eq:aux_2016_08_18_05}
\lim_{N\to\infty}\prob\left(\bigcap_{\mr q\in\mr S}\{G_\Lambda(\mr q)\le t_N\}\right) =
\prob\left(\bigcap_{j=1}^k\left\{\mathcal{A}_2(u_j)-u_j^2\le x_j\right\}\right). 
\end{equation}
Similarly, for the periodic TASEP,  we need to show that 
\begin{equation}
\label{eq:aux_020}
\lim_{N\to\infty}\prob_{\mr v}\left(\bigcap_{\mr q\in\mr S}\{H_\Lambda(\mr q)\le t_N\}\right) =
\prob\left(\bigcap_{j=1}^k\left\{\mathcal{A}_2(u_j)-u_j^2\le x_j\right\}\right).
\end{equation}

From the geometry of the boundary path $\Lambda$, the maximal paths from $\Lambda$ to $\mr q$ are paths from some corners to $\mr q$. 
The following lemma 
shows that 
if the particle is away from a shock (which is the assumption of Theorem~\ref{thm:limiting_process_step}), this corner is $\mr c_{j_N}$ with high probability. 
Recall that $j_N$ is the parameter introduced in the statement of the theorem which measures the number of encounters with shocks by the particle $[\alpha N]$. 
The lemma is proved in Section~\ref{sec:proof_Lemma_1}.

\begin{lm}
\label{lm:corners_trivial}
For all $i\in \intZ$ satisfying $i\ne j_N$, and all $\mr q\in\mr S$, we have
\begin{equation}
\label{eq:aux_024}
\prob\left(G_{\mr c_i}(\mr q) >G_{\mr c_{j_N}}(\mr q)\right) <e^{-t_N^{c\epsilon}},\qquad \prob_{\mr v}\left(H_{\mr c_i}(\mr q) >H_{\mr c_{j_N}}(\mr q)\right) <e^{-t_N^{c\epsilon}}
\end{equation}
for large enough $N$, where $c$ 
is a constant only depending on $u_1,\cdots,u_k, x_1,\cdots,x_k$, and $\epsilon$ is the constant defined in Theorem~\ref{thm:limiting_process_step}.
\end{lm}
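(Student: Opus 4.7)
\textbf{Proof plan for Lemma~\ref{lm:corners_trivial}.} The strategy has two layers: first I would reduce the periodic DLPP bound to the usual DLPP bound via Proposition~\ref{prop:comparison_DLPP_periodic_DLPP}, and then I would compare $G_{\mr c_{j_N}}(\mr q)$ with $G_{\mr c_i}(\mr q)$ by combining the deterministic gap between $d(\mr q-\mr c_{j_N})$ and $d(\mr q-\mr c_i)$ with the tail estimates of Lemma~\ref{prop:tail_estimate}. Only finitely many $i$ matter, since $G_{\mr c_i}(\mr q)=H_{\mr c_i}(\mr q)=-\infty$ unless $\mr q-\mr c_i\in\intZ_{\ge 0}^2$; using $\mr c_i=(1+i(\rho^{-1}-1)N,\,1-iN)$ and the form~\eqref{eq:aux_2016_08_15_01} of $\mr q$, this restricts $i$ to an interval $\mathcal I$ of length $O(t_N/N)=O(t_N^{1/3})$. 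For each $i\in\mathcal I$, I translate the origin to $\mr c_i$ and apply Proposition~\ref{prop:comparison_DLPP_periodic_DLPP} (which is translation invariant) to compare $H_{\mr c_i}(\mr q)$ with $G_{\mr c_i}(\mr q)$. Since $|\mr v|=\Theta(N)$ and $|\mr q-\mr c_i|=O(t_N)$ with $t_N\le CN^{3/2-\epsilon}$, the hypothesis $|\mr v|\ge |\mr q-\mr c_i|^{2/3}(\log|\mr q-\mr c_i|)^{1/2+\epsilon}$ holds, so $H_{\mr c_i}(\mr q)=G_{\mr c_i}(\mr q)$ with probability at least $1-e^{-c|\mr v|^2|\mr q|^{-4/3}}\le 1-e^{-t_N^{c\epsilon}}$. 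It therefore suffices to prove the DLPP statement.

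\textbf{Deterministic gap.} Set $k=i-j_N$, $m=j_N-\alpha+1$, $\tau=\mathfrak t_N=t_N/N$. A direct leading-order calculation using $\mu=\sqrt{m/\tau}$ and the definition of $\mr q$ gives
\[
\tfrac{1}{N}(\mr q-\mr c_{j_N+k})=\bigl((\sqrt\tau-\sqrt m)^2-k(\rho^{-1}-1),\ m+k\bigr)+O(t_N^{-1/3}),
\]
so $d(\mr q-\mr c_{j_N+k})/N=f(k)+o(1)$ where $f(k):=\bigl(\sqrt{(\sqrt\tau-\sqrt m)^2-k(\rho^{-1}-1)}+\sqrt{m+k}\bigr)^2$. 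Since $xy$ is a downward parabola in $k$ for $x(k)=(\sqrt\tau-\sqrt m)^2-k(\rho^{-1}-1)$ and $y(k)=m+k$, $\sqrt{xy}$ is concave, and the linear part of $f$ equals $x+y$; hence $f$ is strictly concave. Its continuous maximizer $k^*(\tau)$ depends continuously on $\tau$, and by the defining property $f(0)=f(-1)$ at $\tau=s_{j_N}$ and $f(0)=f(1)$ at $\tau=s_{j_N+1}$ of the shock times in~\eqref{eq:aux_019}, strict concavity forces $k^*(s_{j_N})=-1/2$ and $k^*(s_{j_N+1})=1/2$. Thus $\tau\in\mathcal S_{j_N}^{(\epsilon')}$ yields $k^*(\tau)\in(-1/2+\delta,1/2-\delta)$ for some $\delta=\delta(\epsilon',\rho,\alpha)>0$, and strict concavity of $f$ gives
\[
\Delta_k(\mr q):=d(\mr q-\mr c_{j_N})-d(\mr q-\mr c_{j_N+k})\ge c(\epsilon',\rho,\alpha)\,N
\]
for every integer $k\ne 0$ with $j_N+k\in\mathcal I$ (with the bound improving for $|k|$ large).

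\textbf{Tail estimate and union bound.} For each $i=j_N+k\in\mathcal I\setminus\{j_N\}$, splitting at the midpoint $d(\mr q-\mr c_{j_N+k})+\Delta_k/2=d(\mr q-\mr c_{j_N})-\Delta_k/2$ gives
\[
\prob\bigl(G_{\mr c_{j_N+k}}(\mr q)>G_{\mr c_{j_N}}(\mr q)\bigr)\le \prob\bigl(G_{\mr c_{j_N+k}}(\mr q)\ge d(\mr q-\mr c_{j_N+k})+\tfrac{\Delta_k}{2}\bigr)+\prob\bigl(G_{\mr c_{j_N}}(\mr q)\le d(\mr q-\mr c_{j_N})-\tfrac{\Delta_k}{2}\bigr).
\]
For $\mr q-\mr c_i\in\mr Q(c_1,c_2)$ the two tail bounds of Lemma~\ref{prop:tail_estimate} apply with $s(\mr q-\mr c_i)=O(t_N^{1/3})$; combined with $\Delta_k/s\ge c N/t_N^{1/3}\ge c\,t_N^{c'\epsilon}$ (which follows from the sub-relaxation condition $t_N\le CN^{3/2-\epsilon}$), each term is $\le e^{-t_N^{c\epsilon}}$. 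Finally a union bound over the $|\mathcal I|=O(t_N^{1/3})$ values of $i\ne j_N$ preserves the stretched-exponential bound.

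\textbf{Main obstacle.} The delicate point is the deterministic gap step for the extreme values of $i\in\mathcal I$, where $\mr q-\mr c_i$ has a very skewed aspect ratio and escapes every fixed cone $\mr Q(c_1,c_2)$, so Lemma~\ref{prop:tail_estimate} is not directly applicable. This must be handled separately, either by observing that $f$ is concave and $|\Delta_k|$ grows fast enough in $|k|$ to absorb any weaker tail estimate, or by a direct $L^1$-style tail bound for the degenerate aspect-ratio regime; the uniformity in $\mr q\in\mr S$ and in the full range $\mathcal I$ is the technical core of the argument.
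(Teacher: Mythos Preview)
Your overall strategy---reduce to the usual DLPP via Proposition~\ref{prop:comparison_DLPP_periodic_DLPP}, establish a deterministic gap $d(\mr q-\mr c_{j_N})-d(\mr q-\mr c_i)$, and feed it into the tail bounds of Lemma~\ref{prop:tail_estimate}---matches the paper. But there is a genuine error in the size of the gap, and your treatment of $|i-j_N|\ge 2$ differs from the paper's in a way that leaves your argument incomplete.

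\textbf{The gap is not $\ge cN$.} Your concavity analysis of $f(k)$ is correct, and so is the claim that $k^*\in(-\tfrac12+\delta,\tfrac12-\delta)$ with $\delta$ independent of $N$. What fails is the conclusion $\Delta_{\pm1}\ge c(\epsilon',\rho,\alpha)N$: the second derivative of $f$ near $k^*$ is of order $1/\ttt_N$, not of order $1$. Indeed, with $m=j_N-\alpha+1$ and $\tau=\ttt_N$, one has $x\asymp\tau(1-\mu)^2$, $y\asymp\tau\mu^2$, and a short computation gives $|f''(k^*)|\asymp 1/\tau$. Hence $f(0)-f(\pm1)\asymp \delta/\ttt_N$, so the true gap is
\[
\Delta_{\pm1}\;\asymp\; \frac{N}{\ttt_N}\;=\;\frac{N^2}{t_N},
\]
which can be as small as $N^{1/2+\epsilon}$ when $j_N\asymp N^{1/2-\epsilon}$. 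This is exactly the bound the paper obtains (see the estimate that the left side of~\eqref{eq:aux_2016_08_09_02} is $\ge c\epsilon'\ttt_N^{-1}$, leading to~\eqref{eq:aux_037} with exponent $\tfrac13+\tfrac29\epsilon$ rather than $1$). Fortunately your downstream inequality survives: $\Delta_{\pm1}/s(\mr q-\mr c_{j_N})\gtrsim N^2/t_N^{4/3}\ge t_N^{c\epsilon}$ from $t_N\le CN^{3/2-\epsilon}$, so the tail bound still gives $e^{-t_N^{c\epsilon}}$. But the intermediate claim as written is false and should be corrected.

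\textbf{The extreme $i$ and the paper's monotonicity trick.} You correctly flag that for $i$ near the ends of $\mathcal I$ the vector $\mr q-\mr c_i$ escapes every fixed cone $\mr Q(c_1,c_2)$, so Lemma~\ref{prop:tail_estimate} is unavailable, and you leave this unresolved. The paper avoids this entirely by a different reduction: it proves the $|i-j_N|=1$ case directly (handling the degenerate-cone situations $j_N=0,i=1$ and $j_N=1,i=2$ by ad hoc comparisons, see Cases~(1)--(2) in Section~\ref{sec:proof_Lemma_1}), and then for $|i-j_N|\ge2$ uses the \emph{planar monotonicity}
\[
\prob\big(G_{\mr c_i}(\mr q)>G_{\mr c_{i+1}}(\mr q)\big)\le \prob\big(G_{\mr c_{i+1}}(\mr q)>G_{\mr c_{i+2}}(\mr q)\big),
\]
which follows because the maximal path from $\mr c_{i+1}$ to $\mr q+\mr v$ must cross the maximal path from $\mr c_{i+2}$ to $\mr q$. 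Telescoping gives $\prob(G_{\mr c_i}(\mr q)>G_{\mr c_{j_N}}(\mr q))\le |i-j_N|\cdot\prob(G_{\mr c_{j_N\pm1}}(\mr q)>G_{\mr c_{j_N}}(\mr q))$, and since $|i-j_N|=O(t_N/N)$ this factor is absorbed. This is both cleaner and complete: it never requires tail estimates for $\mr q-\mr c_i$ outside a fixed cone. Your union-bound route could in principle be pushed through (the gap $\Delta_k$ grows in $|k|$), but you would still need an upper-tail bound for $G_{\mr c_i}(\mr q)$ valid at arbitrary aspect ratio, which you have not supplied.
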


Now we prove~\eqref{eq:aux_2016_08_18_05} and~\eqref{eq:aux_020}.
Using Lemma~\ref{lm:corners_trivial}, 
\begin{equation}
\label{eq:aux_021}
\begin{split}
\prob\left(\bigcap_{\mr q\in\mr S}\{G_{\Lambda}(\mr q)\le t_N\}\right)
&= 1- \prob\left(\bigcup_{\mr q\in\mr S}\{G_{\Lambda}(\mr q)> t_N\}\right)\\
&\ge 1-\prob\left(\bigcup_{\mr q\in\mr S}\{G_{\mr c_{j_N}}(\mr q)> t_N\}\right)-\sum_{i\ne j_N}\sum_{\mr q\in\mr S}\prob\left(G_{\mr c_i}(\mr q)> G_{\mr c_{j_N}}(\mr q)\right)\\
&=\prob\left(\bigcap_{\mr q\in\mr S}\{G_{\mr c_j}(\mr q)\le  t_N\}\right)-\sum_{i\ne j_N}\sum_{\mr q\in\mr S}\prob\left(G_{\mr c_i}(\mr q)> G_{\mr c_{j_N}}(\mr q)\right)\\
&\ge \prob\left(\bigcap_{\mr q\in\mr S}\{G_{\mr c_j}(\mr q)\le  t_N\}\right)-\sum_{\mr q\in\mr S}c'N^{-1}|\mr q|e^{-t_N^{c\epsilon}}
\end{split}
\end{equation}
since there are at most $O(N^{-1}|\mr q|)$ corners $\mr c_i$  such that $\prob\left(G_{\mr c_i}(\mr q)> G_{\mr c_j}(\mr q)\right)\ne 0$. Here $c'$ is a positive constant. 
We also have the trivial bound 
\begin{equation}
\label{eq:aux_022}
\prob\left(\bigcap_{\mr q\in\mr S}\{G_{\Lambda}(\mr q)\le t_N\}\right) \le \prob\left(\bigcap_{\mr q\in\mr S}\{G_{\mr c_{j_N}}(\mr q)\le  t_N\}\right).
\end{equation}
Thus in order to prove~\eqref{eq:aux_2016_08_18_05}, it is sufficient to show that 
\begin{equation}
\label{eq:aux_2016_08_18_06}
\lim_{N\to\infty}\prob\left(\bigcap_{\mr q\in\mr S}\{G_{\mr c_{j_N}}(\mr q) \le  t_N\}\right)=
\prob\left(\bigcap_{j=1}^k\left\{\mathcal{A}_2(u_j)-u_j^2\le x_j\right\}\right).
\end{equation}
But $\mr q$ in $\mr S$ is of form~\eqref{eq:aux_2016_08_15_01} with some $i=1,\cdots,k$. 
Hence 
\begin{equation}
\mr q-\mr c_{j_N}=\left((1-\mu)^2t_N-(1-\mu^{-1})\kappa_2u_2t_N^{2/3}-\sigma_2x_it_N^{1/3}+o(t_N^{1/3}), \mu^2t_N-\kappa_2u_it_N^{2/3}\right)
\end{equation}
for some $i=1,\cdots,k$, where the $o(t_N^{1/3})$ term  equals to $j_N([\rho^{-1}N]-L)\ll t_N^{1/3}$.
This is exactly the same framework for the Airy$_2$ process limit of multi-point distribution in the DLPP (see \cite{Prahofer-Spohn02, Johansson03, Borodin-Ferrari08}), and it is well-known that 
\begin{equation}
\lim_{N\to\infty}\prob\left(\bigcap_{\mr q\in\mr S}\{G(\mr q-\mr c_{j_N}) \le  t_N\}\right)= 
\prob\left(\bigcap_{j=1}^k\left\{\mathcal{A}_2(u_j)-u_j^2\le x_j\right\}\right).
\end{equation}
Hence~\eqref{eq:aux_2016_08_18_06} is proved.

For the periodic TASEP,~\eqref{eq:aux_020} follows if we show that 
\begin{equation}
\label{eq:aux_2016_08_18_07}
\lim_{N\to\infty}\prob_{\mr v}\left(\bigcap_{\mr q\in\mr S}\{H_{\mr c_{j_N}}(\mr q) \le  t_N\}\right)= 
\prob\left(\bigcap_{j=1}^k\left\{\mathcal{A}_2(u_j)-u_j^2\le x_j\right\}\right).
\end{equation}
Now, Proposition~\ref{prop:comparison_DLPP_periodic_DLPP} shows that the left hand sides of both~\eqref{eq:aux_2016_08_18_07} and~\eqref{eq:aux_2016_08_18_06} are equal. Thus we obtain~\eqref{eq:aux_2016_08_18_07}.

\subsection{Proof of Theorem~\ref{thm:limiting_process_step_shock}}
\label{sec:proof_theorem_step_2}

\subsubsection{Parts (a) and (b)}\label{sec:theorem3ab}
Consider part (a). 
The proof of Theorem~\ref{thm:limiting_process_step} applies without any change if we have Lemma~\ref{lm:corners_trivial} with $t_N=s_{j_N}N$ and the under the restriction that 
$u_1,\cdots,u_k >0$. 
The proof of Lemma~\ref{lm:corners_trivial} still applies in this set-up only after a small change; see the discussions in Subsection~\ref{sec:proof_Lemma_1_others}.
Part (b) is similar. 

\subsubsection{Part (c)}

 

The argument is similar to the proof of Theorem~\ref{thm:limiting_process_step}. 
In this case,  we only consider the maximal path to a single point, and hence we do not need the set  $\mr S$ in the proof. 
Instead we only define one lattice point
\begin{equation}
\label{eq:aux_2016_08_23_01}
	\mr q=\left((1-2\mu)s_{j_N}+(\rho^{-1}j_N+1-\alpha)N-\sigma_2xs_{j_N}^{1/3},(1-\alpha)N+1\right).
\end{equation}
After that, the proof proceeds same as before with the following lemmas in place of Lemma~\ref{lm:corners_trivial} and the equations~\eqref{eq:aux_2016_08_18_06} and~\eqref{eq:aux_2016_08_18_07}. 
Their proofs are given in Section~\ref{sec:inftaseprel} and we obtain the part (c).


\begin{lm}
	\label{lm:corners_trivial2}
	For all $i\ge j_N+1$, we have
	\begin{equation}
	\label{eq:aux_044}
	\prob\left(G_{\mr c_i}(\mr q) >G_{\mr c_{j_N}}(\mr q)\right) <e^{-t_N^{c\epsilon}},\qquad 
	\prob_{\mr v}\left(H_{\mr c_i}(\mr q) >H_{\mr c_{j_N}}(\mr q)\right) <e^{-t_N^{c\epsilon}}
	\end{equation}
	for large enough $N$, where $c$ is a constant independent of $j_N$, $t_N$ and $N$, and $\epsilon$ is the constant defined in Theorem~\ref{thm:limiting_process_step}. For all $i \le j_N-2$, we have~\eqref{eq:aux_044} with $\mr c_{j_N}$ replaced by $\mr c_{j_{N}-1}$.
\end{lm}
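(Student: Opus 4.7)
The plan is to follow the same overall strategy as the (deferred) proof of Lemma~\ref{lm:corners_trivial}, with one essential modification: at a shock time $t_N=s_{j_N}N$ two adjacent corners $\mr c_{j_N-1}$ and $\mr c_{j_N}$ are simultaneously dominant in the deterministic leading order, so the comparison must be carried out against $\mr c_{j_N}$ for $i\ge j_N+1$ and separately against $\mr c_{j_N-1}$ for $i\le j_N-2$. The reduction in both cases is to show that the deterministic leading term $d(\mr q-\mr c_i)$ is strictly less than the corresponding dominant value by a margin that dwarfs the fluctuation scale $t_N^{1/3}$, after which the tail estimates of Lemma~\ref{prop:tail_estimate} (and Proposition~\ref{prop:comparison_DLPP_periodic_DLPP} for the periodic statement) take over.

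I would parametrize corners by $k:=i-j_N$ and view $f(k):=d(\mr q-\mr c_{j_N+k})$ as a function of $k$. Using $\mr v=((\rho^{-1}-1)N,-N)$ and the definition~\eqref{eq:aux_2016_08_23_01} of $\mr q$, the coordinates of $\mr q-\mr c_{j_N+k}$ are $A_k=(1-\mu)^2t_N-k(\rho^{-1}-1)N+O(t_N^{1/3})$ and $B_k=\mu^2t_N+kN$, so $f(k)=N\bar g(k)$ where $\bar g$ is the restriction of the smooth strictly concave function $g(a,b)=(\sqrt a+\sqrt b)^2$ to an affine line. A direct computation gives $\bar g''(k)$ of order $-1/s_{j_N}$ uniformly in the admissible range. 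The shock condition, combined with the identity $\mu+\tilde\mu=2\rho$ that follows from~\eqref{eq:aux_019} and~\eqref{eq:aux_2016_08_24_03}, yields $f(-1)=f(0)$ up to the $O(t_N^{1/3})$ perturbation coming from the $\sigma_2 x s_{j_N}^{1/3}$ term in $\mr q$. Hence the maximizer of $\bar g$ lies in $(-1,0)$, and $|\bar g'(0)|$ and $|\bar g'(-1)|$ are bounded below by a positive constant times $1/s_{j_N}$. Concavity then promotes these slope bounds to the deterministic gaps
\begin{equation}
f(0)-f(k)\ge c_0\,k\,\frac{N}{s_{j_N}}\quad(k\ge 1),\qquad f(-1)-f(k)\ge c_0\,|k+1|\,\frac{N}{s_{j_N}}\quad(k\le -2),
\end{equation}
with $c_0>0$ depending only on $\rho$ and $\alpha$.

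With these gaps in hand the random comparison becomes routine. The sub-relaxation hypotheses $t_N\le CN^{3/2-\epsilon}$ and $j_N\le CN^{1/2-\epsilon}$ imply $s_{j_N}\le C'N^{1/2-\epsilon}$, so the gap is at least of order $N^{1/2+\epsilon}|k|$, whereas the fluctuation scale $s(\mr q-\mr c_{j_N+k})$ is $O(t_N^{1/3})=O(N^{1/2-\epsilon/3})$ when $|k|=O(1)$. Splitting the gap in thirds and applying the lower tail of Lemma~\ref{prop:tail_estimate} to $G_{\mr c_{j_N}}(\mr q)$ and the upper tail to $G_{\mr c_{j_N+k}}(\mr q)$ yields a per-$k$ bound of the form $\exp(-N^{c\epsilon})$; union-bounding over the $O(j_N)=O(N^{1/2})$ admissible values of $k$ and using $t_N\le CN^{3/2}$ to translate $N^{c\epsilon}$ into $t_N^{c'\epsilon}$ produces~\eqref{eq:aux_044} for $G$. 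The statement for $H$ then follows by conditioning on the event of Proposition~\ref{prop:comparison_DLPP_periodic_DLPP}, whose complement has probability at most $e^{-cN^{2\epsilon/3}}$ since $|\mr v|=\Theta(N)$ comfortably exceeds the required threshold $|\mr q-\mr c_{j_N+k}|^{2/3}(\log|\mr q-\mr c_{j_N+k}|)^{1/2+\epsilon''}$.

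The main technical obstacle I foresee is keeping the above estimates uniform across the entire admissible range of $k$. When $|k|$ grows to order $s_{j_N}$, one coordinate of $\mr q-\mr c_{j_N+k}$ becomes small, the displacement leaves every fixed cone $\mr Q(c_1,c_2)$, and Lemma~\ref{prop:tail_estimate} ceases to apply in its stated form. The natural remedy is to split into a near-range, where the concavity gap above is used directly, and a far-range, where $d(\mr q-\mr c_{j_N+k})$ is already below $d(\mr q-\mr c_{j_N})$ by a constant multiple of $t_N$ and much cruder tail bounds on $G_{\mr c_{j_N+k}}(\mr q)$ (for instance through direct exponential-moment control of sums along an up/right path) are enough; careful bookkeeping of the constants across the two regimes, while landing on the claimed exponent $t_N^{c\epsilon}$, is the one place where a mistake could easily slip in.
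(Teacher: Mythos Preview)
Your approach is correct in its essentials and shares with the paper the reduction to a deterministic gap estimate $d(\mr q-\mr c_{j_N})-d(\mr q-\mr c_i)$ (respectively $d(\mr q-\mr c_{j_N-1})-d(\mr q-\mr c_i)$) followed by tail bounds. The computation you sketch for the neighboring corners $k=\pm 1,-2$ is exactly the paper's: both arrive at a gap of order $N/s_{j_N}=N^2/t_N\gg t_N^{1/3}$, and both note that at the shock time $t_N=s_{j_N}N$ the corners $\mr c_{j_N-1}$ and $\mr c_{j_N}$ are tied to leading order.

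The genuine difference is in how the far corners $|i-j_N|\ge 2$ are treated. You propose to push the concavity of $k\mapsto d(\mr q-\mr c_{j_N+k})$ to all $k$, obtain a linearly growing gap, and then union-bound, acknowledging that for $|k|$ comparable to $s_{j_N}$ the displacement $\mr q-\mr c_{j_N+k}$ leaves every fixed cone $\mr Q(c_1,c_2)$ and a separate far-range argument is needed. The paper avoids this split entirely by a monotonicity trick: it shows, via a path-crossing argument (the maximal path from $\mr c_{i+1}$ to $\mr q+\mr v$ must intersect that from $\mr c_{i+2}$ to $\mr q$), that
\[
\prob\big(G_{\mr c_i}(\mr q)>G_{\mr c_{i+1}}(\mr q)\big)\le \prob\big(G_{\mr c_{i+1}}(\mr q)>G_{\mr c_{i+2}}(\mr q)\big),
\]
and then telescopes to get $\prob(G_{\mr c_i}(\mr q)>G_{\mr c_{j_N}}(\mr q))\le |i-j_N|\cdot\prob(G_{\mr c_{j_N+1}}(\mr q)>G_{\mr c_{j_N}}(\mr q))$ (and similarly on the other side). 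This reduces everything to the single neighboring-corner estimate, so no cone-membership bookkeeping for large $|k|$ is ever needed. Your route works, but the paper's monotonicity device is what eliminates precisely the obstacle you flag.
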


\begin{lm}
\label{lm:corner_main2}  We have 
\begin{equation}
\label{eq:aux_045}
\begin{split}
\lim_{N\to\infty}\prob\left(\max\{G_{\mr c_{j_N}}(\mr q), G_{\mr c_{j_N-1}}(\mr q)\}\le t_N\right) 
&= \FGUE(x)\FGUE(r^{-1}x),\\
\lim_{N\to\infty}\prob_{\mr v}\left(\max\{H_{\mr c_{j_N}}(\mr q), H_{\mr c_{j_N-1}}(\mr q)\}\le t_N\right) 
&= \FGUE(x)\FGUE(r^{-1}x).
\end{split}
\end{equation}
\end{lm}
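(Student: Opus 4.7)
The plan is to combine three ingredients: one-point Tracy--Widom limits for each of $G_{\mr c_{j_N}}(\mr q)$ and $G_{\mr c_{j_N-1}}(\mr q)$ separately, asymptotic independence of these two random variables via the transversal-fluctuation bound, and Proposition~\ref{prop:comparison_DLPP_periodic_DLPP} to transfer the conclusion to the periodic model.

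First I would verify the two one-point limits. Substituting the definitions~\eqref{eq:aux_2016_08_23_01} and~\eqref{eq:aux_2016_08_15_03} into the formulas~\eqref{eq:aux_026}, expanding $d(\mr q-\mr c_{j_N})$ and $d(\mr q-\mr c_{j_N-1})$ to second order, and using the shock relation $t_N=s_{j_N}N$ (which forces both $d(\mr q-\mr c_{j_N})$ and $d(\mr q-\mr c_{j_N-1})$ to equal $t_N$ at leading order), a direct computation shows
\begin{equation*}
\frac{t_N-d(\mr q-\mr c_{j_N})}{s(\mr q-\mr c_{j_N})}\longrightarrow x, \qquad \frac{t_N-d(\mr q-\mr c_{j_N-1})}{s(\mr q-\mr c_{j_N-1})}\longrightarrow r^{-1}x,
\end{equation*}
where the parameters $\mu$, $\tilde\mu$, $\sigma_2$, $\arao$ in~\eqref{eq:params} and~\eqref{eq:aux_2016_08_24_03} are chosen precisely so that this matching holds. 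The one-point limit~\eqref{eq:aux_042} then yields $\prob(G_{\mr c_{j_N}}(\mr q)\le t_N)\to \FGUE(x)$ and $\prob(G_{\mr c_{j_N-1}}(\mr q)\le t_N)\to \FGUE(r^{-1}x)$.

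Next I would establish asymptotic independence of the two passage times. The diagonals $\overline{\mr c_{j_N}\mr q}$ and $\overline{\mr c_{j_N-1}\mr q}$ share only the common endpoint $\mr q$, while their starting points are $|\mr v|=\Theta(N)$ apart. Applying Proposition~\ref{prop:tail_transversal} with $y=(\log t_N)^{1/2+\epsilon}$, the maximal path $\pi^{\max}_{\mr c_i}(\mr q)$ lies inside a tube of width $y|\mr q-\mr c_i|^{2/3}=o(N)$ around $\overline{\mr c_i\mr q}$ with probability $1-e^{-c(\log t_N)^{1+2\epsilon}}$. To handle the shared endpoint, I would fix a small $\delta>0$ and cut each path at the horizontal line $\ell_\delta$ located $\delta N$ rows below $\mr q$, writing $G_{\mr c_i}(\mr q)=G_{\mr c_i}(\mr p_i^\star)+G_{\mr p_i^\star}(\mr q)$ for the optimal crossing $\mr p_i^\star\in\ell_\delta$. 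For $\delta$ small, the two tubes are disjoint above $\ell_\delta$, so the truncated passage times $G_{\mr c_{j_N}}(\mr p_{j_N}^\star)$ and $G_{\mr c_{j_N-1}}(\mr p_{j_N-1}^\star)$ are functions of disjoint and hence independent portions of $w$. A uniform tail estimate from Lemma~\ref{prop:tail_estimate}, applied to the short passages $G_{\mr p_i^\star}(\mr q)$ of typical size $O(\delta N)$ with $(\delta N)^{1/3}=o(t_N^{1/3})$ fluctuations, shows that their contribution vanishes at the Tracy--Widom scale after $N\to\infty$ and then $\delta\to 0$. Combining this with the one-point limits gives the product $\FGUE(x)\FGUE(r^{-1}x)$.

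For the periodic case, note that $|\mr v|=\Theta(N)$ while $|\mr q-\mr c_i|^{2/3}(\log|\mr q-\mr c_i|)^{1/2+\epsilon}=O(t_N^{2/3}(\log t_N)^{1/2+\epsilon})=o(N)$ under the sub-relaxation hypothesis. Hence Proposition~\ref{prop:comparison_DLPP_periodic_DLPP}, after a shift of origin placing $\mr c_i$ at $\mr 0$, gives $H_{\mr c_i}(\mr q)=G_{\mr c_i}(\mr q)$ for $i=j_N$ and $i=j_N-1$ simultaneously with probability $1-o(1)$, so the periodic limit~\eqref{eq:aux_045} agrees with the infinite one. The main obstacle is the independence step: because both maximal paths terminate at the common vertex $\mr q$, they inevitably share the environment near $\mr q$, and the $\delta$-truncation must be controlled uniformly to show the shared region contributes only $o(t_N^{1/3})$ to the fluctuations. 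The Gaussian strength of the tail in Proposition~\ref{prop:tail_transversal} should give enough room, but the two-limit argument ($N\to\infty$ first, then $\delta\to 0$) must be set up carefully to ensure the short passages $G_{\mr p_i^\star}(\mr q)$ do not corrupt the Tracy--Widom scale.
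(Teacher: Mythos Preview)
Your approach for the usual DLPP is close in spirit to what the paper does (it cites Ferrari--Nejjar for that half), but your reduction of the periodic case to the usual one via Proposition~\ref{prop:comparison_DLPP_periodic_DLPP} has a real gap. That proposition couples $H$ and $G$ through a choice of fundamental domain $\mr R$ that depends on the direction from the origin to $\mr q$; shifting the origin to $\mr c_{j_N}$ gives one coupling, shifting to $\mr c_{j_N-1}$ gives a different one, and there is no single coupling under which $H_{\mr c_{j_N}}(\mr q)=G_{\mr c_{j_N}}(\mr q)$ and $H_{\mr c_{j_N-1}}(\mr q)=G_{\mr c_{j_N-1}}(\mr q)$ hold simultaneously with high probability. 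The obstruction is not merely technical: in the periodic model the weights near $\mr c_{j_N}$ are \emph{identical} to those near $\mr c_{j_N-1}=\mr c_{j_N}-\mr v$, so the two maximal paths share environment near their \emph{starting} corners as well as near $\mr q$. This is exactly why the paper truncates each path at \emph{two} points $\mr p,\mr p'$ (resp.\ $\mr r,\mr r'$), removing neighborhoods of both endpoints, so that the middle segments $H_{\mr p'}(\mr p)$ and $H_{\mr r'}(\mr r)$ live in strips that are disjoint even modulo $\mr v$; it then uses the Ben~Arous--Corwin sandwich lemma (Lemma~\ref{lm:Ben_Arous-Corwin}) to pass from the truncated variables back to the full ones.

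There is also a secondary problem with your single cut at $\delta N$ rows below $\mr q$, already in the DLPP case. When $j_N$ is of order $N^{1/2-\epsilon}$ the two diagonals $\overline{\mr c_{j_N}\mr q}$ and $\overline{\mr c_{j_N-1}\mr q}$ meet at $\mr q$ at an angle of order $1/j_N$, so at height $\delta N$ below $\mr q$ they are only $O(\delta N/j_N)=O(\delta N^{1/2+\epsilon})$ apart; for small $\epsilon$ this is smaller than the tube width $O(t_N^{2/3})=O(N^{1-2\epsilon/3})$, and the tubes still overlap. The paper's cut points are placed at distance $O(t_N N^{-c\epsilon^2})$ from each endpoint, a scale calibrated so that the middle strips separate while the removed pieces still contribute $o(t_N^{1/3})$ to the fluctuations.
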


\begin{rmk}
Ferrari and Nejjar \cite{Ferrari-Nejjar15} obtained a simple general theorem which shows that the fluctuations at the shock are given by the maximum of two independent random variables under certain assumptions. 
The difficult part is to check the assumptions, and they did it for a few examples. 
The infinite TASEP with periodic step initial condition has two features which are not present in those examples: (i) there are growing number of boundary corners $\mr c_i$'s while there were only two corners in the examples of \cite{Ferrari-Nejjar15}, 
and (ii) we are interested in the case when the end point of the maximal path is of order up to $\ell^{3/2-\epsilon}$ if $\ell$ denotes the distances between the consecutive boundary corners while in \cite{Ferrari-Nejjar15} the end point is of order $O(\ell)$. 
It might be possible to check the assumptions of the general theorem of Ferrari and Nejjar \cite{Ferrari-Nejjar15} for our case, but we instead proceed more directly using some of the ideas in \cite{Ferrari-Nejjar15} instead of trying to check their assumptions. 
Furthermore, we give an uniform proof for both infinite TASEP and the periodic TASEP, the later of which is not discussed in \cite{Ferrari-Nejjar15}. 
\end{rmk}

\subsection{One point fluctuations of infinite TASEP with periodic step initial condition in relaxation time scale}\label{sec:inftaseprel}

In this subsection, we discuss the infinite TASEP with periodic step initial condition at the relaxation and super-relaxation time scales mentioned in Subsection~\ref{sec:reldis}.
We first consider relaxation time scale.  
Suppose $N=N_n$ and $L=L_n$ are two sequences of integers such that
\begin{equation}
\rho_n:=\frac{N_n}{L_n}=\rho+O(L_n^{-1})
\end{equation}
as $n\to\infty$.
 Recall that the periodic step initial condition is
\begin{equation}
x_{i+lN_n}(0)=-N_n+i+lL_n
\end{equation}
for all $1\le i\le N_n$ and all $l\in\intZ$. 
Let $\gamma\in \realR$ and $\tau>0$ be two fixed constants.
Set
\begin{equation}
\label{eq:aux_2016_08_19_01}
t_n=\frac{L_n}{\rho_n}\left[\frac{\tau \sqrt{\rho_n}}{\sqrt{1-\rho_n}} L_n^{1/2}\right]+\frac{L_n}{\rho_n}\gamma +\frac{L_n}{\rho_n}\left(1-\frac{k_n}{N_n}\right)
\end{equation}
where $k_n$ is an arbitrary integer sequence such that $1\le k_n\le N_n$. 
Note that $t_n=O(L_n^{3/2})$ and hence this is the relaxation time scale. 
In this case, we expect that 
\begin{equation}
\label{eq:aux_2016_08_19_02}
\begin{split}
	&\lim_{n\to\infty}\prob\left(\frac{\left(x_{k_n}(t_n)-x_{k_n}(0)\right)-(1-\rho_n)t_n+(1-\rho_n)L_n(1-k_n/L_n)}{\rho_n^{-1/3}(1-\rho_n)^{2/3}t_n^{1/3}}\ge -x\right) \\
	&\qquad =\prob\left(\bigcap_{2\tau^{2/3}u-\gamma\in\intZ}\left\{\mathcal{A}_2(u)-u^2\le x \right\}\right),
\end{split}
\end{equation}
as we explain now.

This follows from the following corresponding result for the usual DLPP: 
\begin{equation}
\label{eq:aux_2016_08_19_03}
	\lim_{n\to\infty}\prob\left( \max_{i} G_{\mr c_i}(\mr q)\le t_n\right)=\prob\left(\bigcap_{2\tau^{2/3}u-\gamma\in\intZ}\left\{\mathcal{A}_2(u)-u^2\le x \right\}\right),
\end{equation}
where 
\begin{equation}
\mr c_i=(1,1)+iv= (1+i(\rho_n^{-1}-1)N_n,1-iN_n), \qquad i\in\intZ,
\end{equation}
and the point $\mr q=(\mr q_1,\mr q_2)$ satisfies
\begin{equation}
\mr q_1=(1-\rho_n)t_n - (1-\rho_n)L_n(1-k_n/L_n)-\rho_n^{-1/3}(1-\rho_n)^{2/3}xt_n^{1/3},
\end{equation}
and $\mr q_2=N_n+1-k_n$. 
Note that 
\begin{equation}
\begin{split}
\mr q-\mr c_i
			&=\Bigg((1-\rho_n)^2t_n+(1-\rho_n)L_n\left(\left[\frac{\tau \sqrt{\rho_n}}{\sqrt{1-\rho_n}} L_n^{1/2}\right]-i+\gamma\right)-\rho_n^{-1/3}(1-\rho_n)^{2/3}xt_n^{1/3}-1,\\
			&\quad\rho_n^2 t_n-\rho_nL_n\left(\left[\frac{\tau \sqrt{\rho_n}}{\sqrt{1-\rho_n}} L_n^{1/2}\right]-i+\gamma\right)\Bigg).
\end{split}
\end{equation}
Since $L_n=\rho_n^{1/3}(1-\rho_n)^{1/3}\tau^{-2/3}t_n^{2/3}+O(t_n^{1/3})$, 
we expect from the Airy$_2$ convergence of the usual DLPP \cite{Prahofer-Spohn02, Johansson03, Borodin-Ferrari08} that formally
\begin{equation}
\frac{G_{\mr c_i}(\mr q)-t_n}{\rho_n^{-1/3}(1-\rho_n)^{-1/3} t_n^{1/3}}\approx \mathcal{A}_2\left(u\right)-u^2 -x
\end{equation}
where 
\begin{equation}
u=\frac1{2\tau^{2/3}}\left(\left[\frac{\tau \sqrt{\rho_n}}{\sqrt{1-\rho_n}} L_n^{1/2}\right]-i+\gamma\right)
\end{equation}
for all $i\in \intZ$. 
Thus we formally obtain~\eqref{eq:aux_2016_08_19_03}. 
To make this rigorous, we need to prove the convergence in all $i$. 
We do not pursue this direction here. 


The scalings in~\eqref{eq:aux_2016_08_19_01} and~\eqref{eq:aux_2016_08_19_02} are same as those for the periodic TASEP with the same initial condition; see equations (3.13) and (3.14) in \cite{Baik-Liu16}.
The limiting distribution, $F(x)=\prob\left(\bigcap_{2\tau^{2/3}u-\gamma\in\intZ}\left\{\mathcal{A}_2(u)-u^2\le x \right\}\right)$, however, is  presumably different from the one for the periodic TASEP obtained in \cite{Prolhac16, Baik-Liu16}.
We note that for the discrete-time infinite TASEP with $\rho=1/2$, the fluctuations under very general initial conditions were studied by Corwin, Liu, and Wang \cite{Corwin-Liu-Wang14}.
Assuming that the same holds for continuous-time infinite TASEP with general $\rho$, the relaxation time scale with periodic step initial condition formally fits with the framework of their result: it corresponds to the case of a discrete delta function as the initial profile. This implies, formally,~\eqref{eq:aux_2016_08_19_02}.

The super-relaxation case is, again formally, the case when $\tau = +\infty$ in the above analysis. 
Then the right-hand side of~\eqref{eq:aux_2016_08_19_03} is expected to be 
\beq
	\prob\left( \bigcap_{u\in \realR}\left\{\mathcal{A}_2(u)-u^2\le x \right\}\right)
	= 	\prob\left( \sup_{u\in \realR} (\mathcal{A}_2(u)-u^2) \le x \right).
\end{equation}
It is known that this is same as the GOE Tracy-Widom distribution \cite{Johansson03} after a simple scaling.

\section{Proof of lemmas in Section~\ref{sec:proof}}
\label{sec:others}

We prove the lemmas used in the previous section. 
We first prove Lemmas~\ref{lm:corners_trivial} and~\ref{lm:corners_trivial2} in Subsection~\ref{sec:proof_Lemma_1}, and  then Lemma~\ref{lm:corner_main2} in Subsection~\ref{sec:proof_Lemma_3}. 
Finally we prove Lemma~\ref{lm:flat_trivial} in Subsection~\ref{sec:proof_Lemma_flat_1}.


Throughout this section, we use the notation $c$ to denote a positive constant which is independent of the parameters $N$, $j_N$ and $t_N$. 
Even if the constant is different from one place to another, we may use the same notation $c$ as long as it does not depend on $N$, $j_N$ and $t_N$. 
On the other hand, the notations $c_1$, $c_2$, etc. with a subscription denote absolute constants which do not change from one place to another. 
We also suppress the subscript $N$ in $j_N$ and $t_N$ in this section for notational convenience.

We fix two positive constants $c_1<c_2$ and consider the uniformity of the estimates in $\mr q\in \mr Q(c_1,c_2)$. 
Recall~\eqref{eq:Qdez} for the definition of $\mr Q(c_1,c_2)$. 
We first prove a comparison lemma. 


\begin{lm}
\label{lm:comparison_DLPP}
Fix $0<c_1<c_2$. Let $\epsilon''$ be a fixed positive constant. 
Then there exist positive constants $C$ and $c$ which only depend on $c_1,c_2$ and $\epsilon''$, such that
\begin{equation}
\label{eq:aux_038}
\prob_{\mr v}\left(H(\mr q)\le H(\mr q')\right)\le e^{-c|\mr q|^{\epsilon''}}
\end{equation}
for all $\mr q, \mr q'\in \mr Q(c_1,c_2)$ and $\mr v=(\mr v_1,\mr v_2)\in\intZ^2$ 
satisfying $|\mr q|\ge C$, $|\mr q'|\ge C$,  
\begin{equation}
	d(\mr q)>d(\mr q')+|\mr q|^{1/3+\epsilon''},
\end{equation} 
$\mr v_2<0<\mr v_1$, and $|\mr v|\ge |\mr q|^{2/3+\epsilon''/2}$. 
In particular, we have
\begin{equation}
\label{eq:aux_2016_08_21_03}
\prob \left(G(\mr q)\le G(\mr q')\right)\le e^{-c|\mr q|^{\epsilon''}}.
\end{equation}
\end{lm}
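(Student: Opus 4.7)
The plan is to first prove the ``in particular'' statement \eqref{eq:aux_2016_08_21_03} for the usual DLPP by separating the deviation into two tails around the deterministic means $d(\mr q)$ and $d(\mr q')$, and then transfer the result to the periodic DLPP using Proposition~\ref{prop:comparison_DLPP_periodic_DLPP}.

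\smallskip\noindent\emph{Step 1 (comparison of $G$).} Since $\mr q,\mr q'\in \mr Q(c_1,c_2)$, the quantity $d(\mr q)$ is comparable to $|\mr q|$; in particular $d(\mr q)>d(\mr q')$ forces $|\mr q'|\le c_4|\mr q|$ for some $c_4=c_4(c_1,c_2)$. Set $\Delta:=d(\mr q)-d(\mr q')\ge |\mr q|^{1/3+\epsilon''}$. The union bound gives
\begin{equation}
\prob\bigl(G(\mr q)\le G(\mr q')\bigr)
\le \prob\!\left(G(\mr q)\le d(\mr q)-\tfrac{\Delta}{2}\right)
+\prob\!\left(G(\mr q')\ge d(\mr q')+\tfrac{\Delta}{2}\right).
\end{equation}
Using $s(\mr q),s(\mr q')\asymp |\mr q|^{1/3}$ (uniformly in $\mr Q(c_1,c_2)$), we have $\Delta/s(\mr q),\Delta/s(\mr q')\ge c\,|\mr q|^{\epsilon''}$. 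Feeding these into the lower-tail bound \eqref{eq:lower_tail} and the upper-tail bound \eqref{eq:upper_tail} of Lemma~\ref{prop:tail_estimate} yields contributions bounded by $e^{-c|\mr q|^{3\epsilon''/2}}$ and $e^{-c|\mr q|^{\epsilon''}}$ respectively; for $|\mr q|$ large enough both are at most $e^{-c|\mr q|^{\epsilon''}}$, proving \eqref{eq:aux_2016_08_21_03}.

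\smallskip\noindent\emph{Step 2 (transfer to $H$).} Couple the usual and periodic DLPP on one probability space by choosing any fundamental domain $\mr R$ as in Section~\ref{sec:tail_periodic_DLPP}, so that $H(\cdot)\deq H^{(\mr R)}(\cdot)$. Applying Proposition~\ref{prop:comparison_DLPP_periodic_DLPP} once with base point $\mr q$ and once with base point $\mr q'$ (both applications are legitimate because $|\mr v|\ge |\mr q|^{2/3+\epsilon''/2}$ dominates both $|\mr q|^{2/3}(\log|\mr q|)^{1/2+\epsilon}$ and $|\mr q'|^{2/3}(\log|\mr q'|)^{1/2+\epsilon}$ thanks to $|\mr q'|\le c_4|\mr q|$), we obtain
\begin{equation}
\prob\!\left(H^{(\mr R)}(\mr q)\ne G(\mr q)\right)+\prob\!\left(H^{(\mr R)}(\mr q')\ne G(\mr q')\right)\le 2 e^{-c|\mr v|^2|\mr q|^{-4/3}}\le 2 e^{-c|\mr q|^{\epsilon''}}.
\end{equation}
On the complementary event the two sides of the inequality $H(\mr q)\le H(\mr q')$ agree with the usual DLPP values, and Step~1 applies, giving \eqref{eq:aux_038}.

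\smallskip\noindent\emph{Main obstacle.} The only subtlety is to ensure that all implicit constants are uniform over $\mr q,\mr q'\in \mr Q(c_1,c_2)$ and, in particular, that the comparisons $|\mr q'|\le c_4|\mr q|$ and $s(\mr q')\le c\,|\mr q|^{1/3}$ hold with constants depending only on $c_1,c_2$ (so that $\epsilon''$ survives in the exponent on the right-hand side). This is routine from the explicit formulas for $d$ and $s$ in \eqref{eq:aux_026}, but has to be checked carefully because $|\mr q'|$ is otherwise allowed to be much smaller than $|\mr q|$.
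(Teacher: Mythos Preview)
Your approach is the same as the paper's in substance: split the event $\{H(\mr q)\le H(\mr q')\}$ into two one-sided tails around $d(\mr q)$ and $d(\mr q')$, control each by Lemma~\ref{prop:tail_estimate}, and pass between $H$ and $G$ via Proposition~\ref{prop:comparison_DLPP_periodic_DLPP}. The paper merely orders the steps differently: it works directly with $H$, writes
\[
\prob_{\mr v}\bigl(H(\mr q)\le H(\mr q')\bigr)\le 2-\prob_{\mr v}\bigl(H(\mr q)>d(\mr q)-\tfrac12|\mr q|^{1/3+\epsilon''}\bigr)-\prob_{\mr v}\bigl(H(\mr q')<d(\mr q')+\tfrac12|\mr q|^{1/3+\epsilon''}\bigr),
\]
and only then replaces each one-point tail probability for $H$ by the corresponding one for $G$ using Proposition~\ref{prop:comparison_DLPP_periodic_DLPP}.

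This ordering matters for one small reason, and it is the only point where your Step~2 is not quite clean. Proposition~\ref{prop:comparison_DLPP_periodic_DLPP} as stated pins down the fundamental domain $\mr R$ in terms of the base point (it is the strip $\{x\mr v+y\mr q:-1/2<x\le 1/2,\ y\in\realR\}$). Applying it ``once with base point $\mr q$ and once with base point $\mr q'$'' therefore invokes two \emph{different} domains $\mr R$ and $\mr R'$, so you cannot argue that $H^{(\mr R)}(\mr q)=G(\mr q)$ and $H^{(\mr R)}(\mr q')=G(\mr q')$ simultaneously on a single coupled space; the second equality is only guaranteed for $\mr R'$, not $\mr R$. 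Since $\mr q$ and $\mr q'$ can point in genuinely different directions within $\mr Q(c_1,c_2)$, the strip $B_{\overline{\mr 0\mr q'}}$ need not sit inside $\mr R$. Splitting into tails first, as the paper does, turns each application of Proposition~\ref{prop:comparison_DLPP_periodic_DLPP} into a one-point statement and the issue evaporates. With that reordering your argument is complete.
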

\begin{proof}
We have 
\begin{equation}
\label{eq:aux_039}
\begin{split}
&\prob_{\mr v}\left(H(\mr q)\le H(\mr q')\right)\\
&\le  1-\prob_{\mr v}\left(H(\mr q)>d(\mr q)-\frac{1}{2}|\mr q|^{1/3+\epsilon''}, \, H(\mr q')<d(\mr q')+\frac{1}{2}|\mr q|^{1/3+\epsilon''}\right)\\
&\le  2-\prob_{\mr v}\left(H(\mr q)>d(\mr q)-\frac{1}{2}|\mr q|^{1/3+\epsilon''}\right)-\prob_{\mr v}\left(H(\mr q')<d(\mr q')+\frac{1}{2}|\mr q|^{1/3+\epsilon''}\right) .
\end{split}
\end{equation}
Note that $|\mr v|\ge |\mr q|^{2/3+\epsilon''/2}\gg |\mr q|^{2/3}\log|\mr q|$. Using Proposition~\ref{prop:comparison_DLPP_periodic_DLPP}, the right hand side of~\eqref{eq:aux_039} can be replaced by
\begin{equation}
	2-\prob\left(G(\mr q)>d(\mr q)-\frac{1}{2}|\mr q|^{1/3+\epsilon''}\right)-\prob\left(G(\mr q')<d(\mr q')+\frac{1}{2}|\mr q|^{1/3+\epsilon''}\right)+e^{-c|q|^{\epsilon''}} .
\end{equation}Combining with the tail estimates of DLPP in Lemma~\ref{prop:tail_estimate}, we obtain~\eqref{eq:aux_038}.
\end{proof}

\subsection{Proof of Lemma~\ref{lm:corners_trivial} and~\ref{lm:corners_trivial2}}
\label{sec:proof_Lemma_1}

We present our proof for the usual DLPP only.
We use the transversal estimate,  Proposition~\ref{prop:tail_transversal}, 
and also the inequality~\eqref{eq:aux_2016_08_21_03}. 
The proof applies to the periodic DLPP without any change except that $\prob$ and $G$ are replaced to $\prob_{\mr v}$ and $H$, and we use Proposition~\ref{prop:tail_transversal2} and the inequality~\eqref{eq:aux_038}.

We consider Lemma~\ref{lm:corners_trivial} first. 
Recall that the corners $\mr c_i$ are defined by (see~\eqref{eq:aux_2016_08_15_03})
\begin{equation}
\mr c_i=(1,1)+i\mr v= (i(L-N),-iN) + O(1), \qquad i\in\intZ.
\end{equation}
We need to compare the last passage time $G_{\mr c_i}(\mr q)$ from $\mr c_i$ with arbitrary index $i$ to an arbitrary point $\mr q$ in the set $\mr S$. 
The lattice set $\mr S$ is finite and a point $\mr q=(\mr q_1,\mr q_2)$ in $\mr S$ is of form 
\begin{equation}
\begin{split}
	\mr q_1&=(1-2\mu)t+(\rho^{-1}j+1-\alpha)N-(1-\mu^{-1})\kappa_2 u_\ell t^{2/3}-\sigma_2 x_\ell t^{1/3}
\end{split}
\end{equation}
and
\begin{equation}
\mr q_2=(1-\alpha)N-\kappa_2u_\ell t^{2/3}+1
\end{equation}
for some $\ell=1, \cdots, k$. Since we are in the sub-relaxation time scale, $t_N\le O(N^{3/2-\epsilon})$, 
we see that 
\begin{equation}\label{eq:mrqforonefq}
\begin{split}
	\mr q&= \left( (1-2\mu)t+(\rho^{-1}j+1-\alpha)N, \, (1-\alpha)N \right) + o(N). 
\end{split}
\end{equation}
Thus the leading term $d(\mr q-\mr c_i)$ of $G_{\mr c_i}(\mr q)$ does not depend on $\ell=1, \cdots, k$. 
In order to prove Lemma~\ref{lm:corners_trivial},  
\begin{equation}
\label{eq:aux_2016_08_15_02}
	\prob_{\mr v}\left(G_{\mr c_i}(\mr q) >G_{\mr c_{j}}(\mr q)\right) <e^{-t^{c\epsilon}},
\end{equation}
we need to find $i$ at which $d(\mr q-\mr c_i)$ becomes the largest. 
This turns out to be $i=j$. 
For the case of Lemma~\ref{lm:corners_trivial2}, the values at $i=j$ and $i=j-1$ are same to the leading order. 
In the actual proof, one needs to be careful with the error term $o(N)$ in~\eqref{eq:mrqforonefq} in order to make the argument work for all range of the sub-relaxation time scale $t\ll N^{3/2}$. 
We prove the result for $i=j\pm 1$ first. 
After that, we obtain the result for the case  $|i-j|\ge 2$ from the  case $|i-j|=1$.

In the proof, we  assume that $\mr c_i$ are on the lower-left side of q. 
Otherwise 
the inequality is trivial.

\subsubsection{Proving~\eqref{eq:aux_2016_08_15_02} when $|i-j|=1$ }

We use Proposition~\ref{prop:tail_estimate} and Lemma~\ref{lm:comparison_DLPP}.
To use them, in our case we need to check that $\mr q-\mr c_i\in\mr Q(c_1,c_2)$. 
When $j=0$ and $i=1$ or $j=1$ and $i=2$, it may not be possible to find fixed 
constants $c_1$ and $c_2$ such that $\mr q-\mr c_i\in \mr Q(c_1,c_2)$. 
We consider these cases separately. 

\vspace{0.3cm}

{\bf{Case $(1)$: $j=0$ and $i=1$.}}

\vspace{0.3cm}

 Since we assume that $\mr c_1$ is on the lower left of $\mr q$, we have $\mr q_1\ge (\rho^{-1}-1)N+O(1)$, i.e.,
\begin{equation}
\label{eq:aux_027}
(\sqrt{t}-\sqrt{(1-\alpha)N})^2-(1-\mu^{-1})\kappa_2ut^{2/3}-x\mu^{-1/3}(1-\mu)^{2/3}t^{1/3}\ge (\rho^{-1}-1)N+O(1).
\end{equation}
Note that it is possible that $\mr q_1$ is close to $(\rho^{-1}-1)N+O(1)$, which implies that $\mr q$ is close to the vertical line with the corner $\mr c_1$. If this happens, 
$\mr q-\mr c_1$ is not necessary in any given cone $\mr Q(c_1,c_2)$.

Pick two positive constants $\epsilon_1$ and $\epsilon_2$ such that
\begin{equation}
\label{eq:aux_028}
\left(\sqrt{\rho^{-1}-1}+\sqrt{1-\alpha}\right)^2 > (\sqrt{\epsilon_1}+\sqrt{2-\alpha})^2+\epsilon_2.
\end{equation}
Such constants exist since $\rho^{-1}\ge 2$. Now set $\mr q'=(\mr q'_1,\mr q'_2)=(\left[\epsilon_1N+(\rho^{-1}-1)N\right], N+1-[\alpha N])$, a lattice point which is on the same horizontal line with $\mr q$. See Figure~\ref{fig:compare_corners_1}. Note that our choice of $\mr q'$ guarantees that $\mr q'$ stays in some cone $\mr Q(c_1,c_2)$ for all $N$.

Recall $d(\mr q)$ defined in~\eqref{eq:aux_026}.
Then using~\eqref{eq:aux_027} and~\eqref{eq:aux_028}, we have, for large enough $N$,
\begin{equation}
\label{eq:aux_025}
\begin{split}
d(\mr q-\mr c_0)&\ge \left(\sqrt{(\rho^{-1}-1)N+O(1)}+\sqrt{(1-\alpha)N+O(1)}\right)^2\\
				&> \left(\sqrt{\epsilon_1N+O(1)}+\sqrt{(2-\alpha)N+O(1)}\right)^2+\epsilon_2N\\
				&=d(\mr q'-\mr c_1)+cN.
\end{split}
\end{equation}
Hence, if $\mr q$ is on the left side of $\mr q'$, we have
\begin{equation}
\label{eq:aux_029}
\prob\left(G_{\mr c_1}(\mr q) > G_{\mr c_0}(\mr q)\right)
\le \prob\left(G_{\mr c_1}(\mr q') > G_{\mr c_0}(\mr q)\right) \le e^{-cN^{2/3}},
\end{equation}
where we applied Lemma~\ref{lm:comparison_DLPP}. Note that $|\mr q|=O(N)=O(t)$ in this case. 
Thus, we obtain~\eqref{eq:aux_2016_08_15_02}.

It remains to show~\eqref{eq:aux_2016_08_15_02} when $\mr q$ is on the right side of $\mr q'$. In this case we have $\mr q-\mr c_0, \mr q-\mr c_1 \in \mr Q(c_1,c_2)$ for some  $c_1$ and $c_2$. Moreover, we can check that
\begin{equation}
\label{eq:aux_031}
\begin{split}
d(\mr q -\mr c_1)<d(\mr q-\mr c_0)-c N
\end{split}
\end{equation}
for sufficiently large $N$. In fact, the above inequality, after dropping the smaller order terms, is equivalent to
\begin{equation}
\left(\sqrt{\left(\sqrt{\ttt_N}-\sqrt{1-\alpha}\right)^2-(\rho^{-1}-1)}+\sqrt{2-\alpha}\right)^2<\ttt_N -c.
\end{equation}
Since $\ttt_N$ is in a compact interval $\mathcal{S}_1^{(\epsilon')}$, it is sufficient to show that
\begin{equation}
\label{eq:aux_030}
\sqrt{\left(\sqrt{\ttt_N}-\sqrt{1-\alpha}\right)^2-(\rho^{-1}-1)}+\sqrt{2-\alpha}<\sqrt{\ttt_N}.
\end{equation}
From~\eqref{eq:aux_028}, we find $\sqrt{\ttt_N}>\sqrt{1-\alpha}+\sqrt{\rho^{-1}-1}>\sqrt{2-\alpha}$. Then~\eqref{eq:aux_030} is equivalent to
\begin{equation}
(\sqrt{\ttt_N}-\sqrt{1-\alpha})^2-(\sqrt{\ttt_N}-\sqrt{2-\alpha})^2<\rho^{-1}-1,
\end{equation}
i.e.,
\begin{equation}
\ttt_N<\frac{\sqrt{2-\alpha}+\sqrt{1-\alpha}}{2\rho}=s_1,
\end{equation}
which is obvious since $\ttt_N$ lies in the interval $\mathcal{S}_1^{(\epsilon')}$.
This implies~\eqref{eq:aux_031}. 
We then obtain~\eqref{eq:aux_2016_08_15_02} by applying Lemma~\ref{lm:comparison_DLPP}.
\begin{figure}
\centering
\includegraphics[scale=0.5]{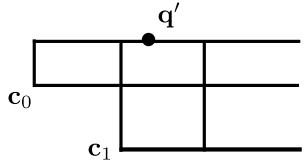}
\caption{Illustration of $\mr q'$ in Case (1) of Step (1)}
\label{fig:compare_corners_1}
\end{figure}

\vspace{0.3cm}
{\bf Case $(2)$: $j=1$ and $i=2$. }

\vspace{0.3cm}

Since we assume $\mr c_2$ is on the lower left of $\mr q$, we have $\mr q_1\ge 2(\rho^{-1}-1)N+O(1)$, i.e.,
\begin{equation}
 (\sqrt{t}-\sqrt{(2-\alpha)N})^2-(1-\mu^{-1})\kappa_2ut^{2/3} -x\mu^{-1/3}(1-\mu)^{2/3}\ge (\rho^{-1}-1)N+O(1).
\end{equation}
Similarly to Case (1), it is possible that $\mr q_1$ is close to $1+2(\rho^{-1}-1)N$, which implies $\mr q$ is close to the vertical line with the corner $\mr c_2$. The proof of~\eqref{eq:aux_2016_08_15_02} in this case is similar to Case (1), and we skip the details. 

\vspace{0.3cm}

{\bf Case $(3)$: $j\ge 2$ and $i=j\pm 1$, or $j=1$ and $i=0$.}

\vspace{0.3cm}
In this case, we first show that there exist positive constants $c_1,c_2$ satisfying $c_1<c_2$ such that $\mr q-\mr c_{i}\in\mr Q(c_1,c_2)$ for sufficiently large $N$. In fact, it is sufficient to show that, when  $N$  is sufficiently large,
\begin{equation}
\label{eq:aux_033}
\frac{\mr q_2+(j-1)N-1}{\mr q_1-(j-1)(L-N)-1}>c_1,
\end{equation}
for all $j\ge 1$ and
\begin{equation}
\label{eq:aux_034}
\frac{\mr q_2+(j+1)N-1}{\mr q_1-(j+1)(L-N)-1}<c_2
\end{equation}
for all $j\ge 2$. The first inequality, after dropping smaller order terms, becomes
\begin{equation}
\frac{j-\alpha}{\left(\sqrt{\ttt_N}-\sqrt{j+1-\alpha}\right)^2+\rho^{-1}-1}>c_1
\end{equation}
for all $N$. Using $\sqrt{\ttt_N}<\sqrt{s_{j+1}}\le \sqrt{j+1-\alpha}+(\rho^{-1}-1)\sqrt{j+2-\alpha}$ and $\sqrt{\ttt_N}>\sqrt{s_{j}}>\sqrt{j+1-\alpha}$, the above inequality is further reduced to
\begin{equation}
\frac{j-\alpha}{j+2-\alpha+\rho/(1-\rho)}>\frac{c_1(1-\rho)^2}{\rho^2}
\end{equation}
for all $j\ge 1$. This holds if we choose $c_1$ satisfying $\frac{c_1(1-\rho)^2}{\rho^2} <\min_{j\ge 1}\frac{j-\alpha}{j+2-\alpha+\rho/(1-\rho)}$.

Similarly we can show the second inequality~\eqref{eq:aux_034} holds for all $j\ge 2$ and sufficiently large $N$, 
if we choose $c_2$ satisfying 
\begin{equation}
\frac{c_2(1-\rho^2)}{\rho^2}>\max_{j\ge 2}\frac{j+2-\alpha}{j-\alpha-\rho/(1-\rho)}.
\end{equation}

Now we want to show that
\begin{equation}
\label{eq:aux_037}
d(\mr q-\mr c_i)<d(\mr q-\mr c_j)-c t^{\frac13+\frac29\epsilon}
\end{equation}
 when $N$ is sufficiently large. This inequality, after dropping smaller order terms, is equivalent to
\begin{equation}
\label{eq:aux_035}
\begin{split}
&\left(\sqrt{(\sqrt{\ttt_N}-\sqrt{j+1-\alpha})^2+(j-i)(\rho^{-1}-1)-(1-\mu^{-1})\kappa_2ut^{2/3}N^{-1}}+\sqrt{i+1-\alpha-\kappa_2 ut^{2/3}N^{-1}}\right)^2 \\
&< \left(\sqrt{(\sqrt{\ttt_N}-\sqrt{j+1-\alpha})^2-(1-\mu^{-1})\kappa_2ut^{2/3}N^{-1}}+\sqrt{j+1-\alpha-\kappa_2 ut^{2/3}N^{-1}}\right)^2 -c t^{\frac13+\frac29\epsilon}N^{-1}.
\end{split}
\end{equation}
We first note that the right hand side of~\eqref{eq:aux_035} equals to (by using $\sqrt{j-\alpha+1}=\mu\sqrt{\ttt_N}$)
\begin{equation}
\ttt_N -ct^{\frac13+\frac29\epsilon}N^{-1}+O(t^{1/3}N^{-1}).
\end{equation}
And note that $c  t^{\frac13+\frac29\epsilon}N^{-1}\le O(N^{-\frac12-\frac29\epsilon^2})<\epsilon'$ for sufficiently large $N$, where $\epsilon'$ is the positive constant defined in Theorem~\ref{thm:limiting_process_step} such that $\ttt_N\in\mathcal{S}_j^{(\epsilon')}$. Together with the  facts $\rho\le 1/2$ and $t^{2/3}N^{-1}\le O(N^{-\frac23\epsilon})$, we have
\begin{equation}
\sqrt{\ttt_N-ct^{\frac13+\frac29\epsilon}N^{-1}}>\frac{\sqrt{j-\alpha}+\sqrt{j+1-\alpha}}{2\rho}>\sqrt{i+1-\alpha-\kappa_2ut^{2/3}N^{-1}}. 
\end{equation}
Hence~\eqref{eq:aux_035} is equivalent to
\begin{multline}
\left(\sqrt{\ttt_N-ct^{\frac13+\frac29\epsilon}N^{-1}}-\sqrt{i+1-\alpha-\kappa_2 ut^{2/3}N^{-1}}\right)^2\\
>(\sqrt{\ttt_N}-\sqrt{j+1-\alpha})^2+(j-i)(\rho^{-1}-1)-(1-\mu^{-1})\kappa_2 ut^{2/3}N^{-1},
\end{multline}
and further to 
\begin{multline}
\label{eq:aux_2016_08_09_02}
\frac{2(i-j)}{\sqrt{i+1-\alpha}+\sqrt{j+1-\alpha}}\left(\frac{\sqrt{i+1-\alpha}+\sqrt{j+1-\alpha}}{2}-\sqrt{\ttt_N}\right)\\
>\kappa_2ut^{2/3}N^{-1}\left(\mu^{-1}-\frac{2\sqrt{\ttt_N}}{\sqrt{i+1-\alpha}+\sqrt{i+1-\alpha-\kappa_2ut^{2/3}N^{-1}}}\right) \\
+ct^{\frac13+\frac29\epsilon}N^{-1}\left(1-\frac{2\sqrt{i+1-\alpha-\kappa_2ut^{2/3}N^{-1}}}{\sqrt{\ttt_N}+\sqrt{\ttt_N-ct^{\frac13+\frac29\epsilon}N^{-1}}}\right).
\end{multline}
Now using the assumptions $i=j\pm 1$ and $\ttt_N\in \mathcal{S}_j^{\epsilon'}$, we know the left hand side of~\eqref{eq:aux_2016_08_09_02} is positive and at least $c\epsilon'\ttt_N^{-1}$. On the other hand, recalling $\mu=\sqrt{j+1-\alpha}/\sqrt{\ttt_N}$, it is a direct to check that the first term on the right hand side of~\eqref{eq:aux_2016_08_09_02} is at most
\begin{equation}
\kappa_2|u|t^{2/3}N^{-1}O\left(\ttt_N^{-1}\right)\ll O(\ttt_N^{-1}).
\end{equation}
And the second term on the right hand side of~\eqref{eq:aux_2016_08_09_02} is at most (by noting $t\le O(N^{3/2-\epsilon})$)
\begin{equation}
O(t^{\frac13+\frac29\epsilon}N^{-1})\ll O(Nt^{-1})=O(\ttt_N^{-1}).
\end{equation}
These three estimates implies that~\eqref{eq:aux_2016_08_09_02} holds for sufficiently large $N$.


By using~\eqref{eq:aux_037} and Lemma~\ref{lm:comparison_DLPP}, we obtain~\eqref{eq:aux_2016_08_15_02} for $|i-j|=1$.

\subsubsection{Proving~\eqref{eq:aux_2016_08_15_02} when $|i-j|\ge 2$}

\begin{figure}
\centering
\begin{minipage}{.4\textwidth}
	\includegraphics[scale=0.4]{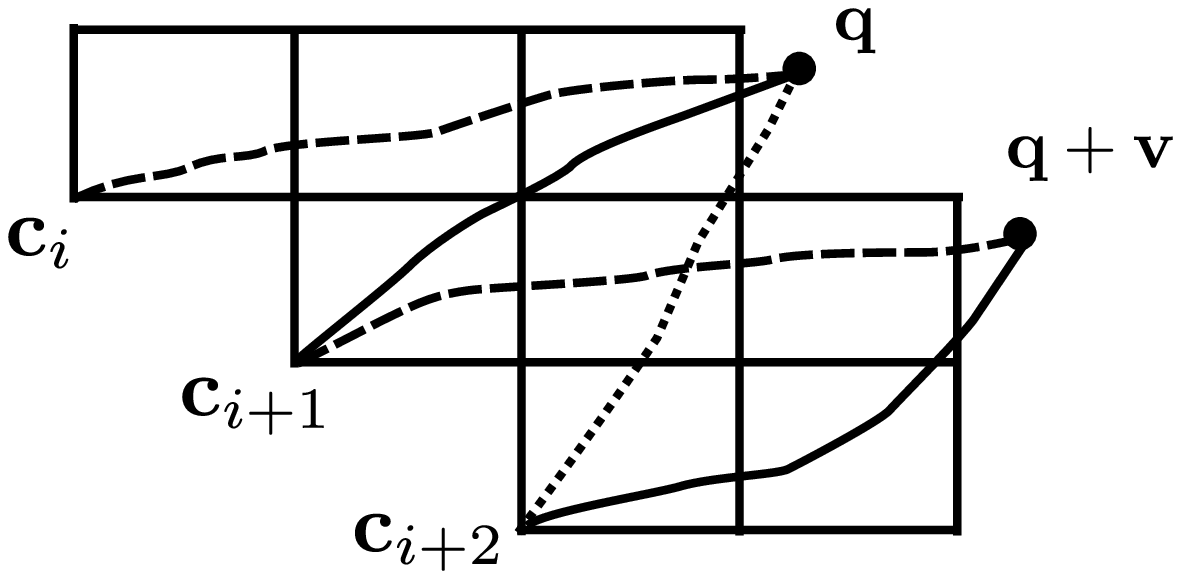}
\end{minipage}
\begin{minipage}{.4\textwidth}
	\includegraphics[scale=0.4]{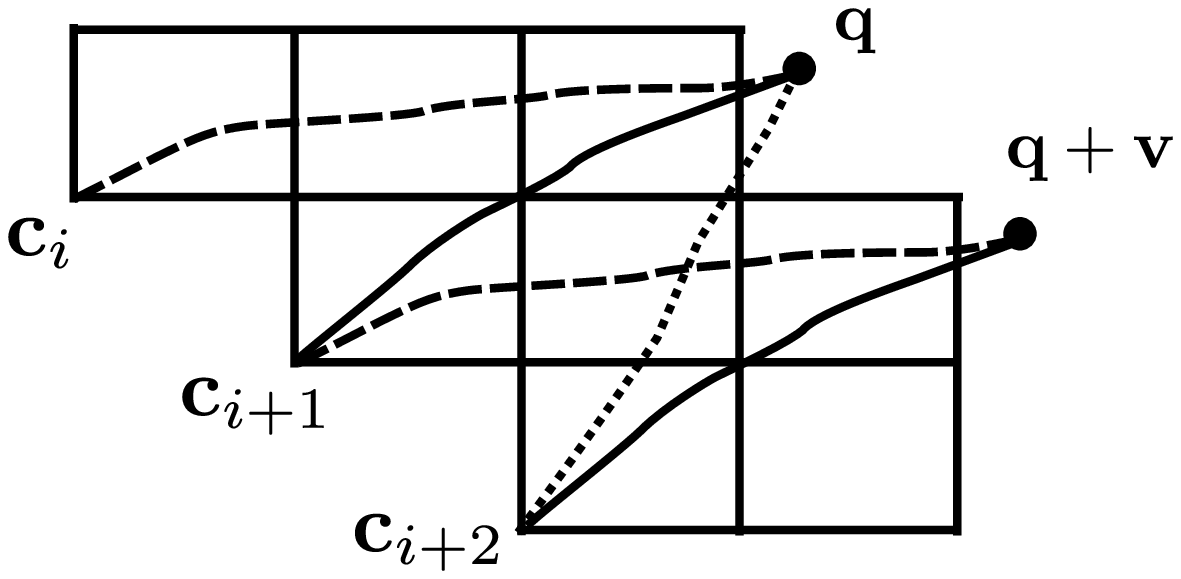}
\end{minipage}
\caption{Illustration of the monotonicity of $\prob(G_{\mr c_i}(\mr q)>G_{\mr c_{i+1}}(\mr q))$ (left) and $\prob_{\mr v}(H_{\mr c_i}(\mr q)>H_{\mr c_{i+1}}(\mr q))$ (right)	}
\label{fig:monotocity}
\end{figure}

We first show the following monotonicity:
\begin{equation}
\label{eq:aux_040}
\prob(G_{\mr c_i}(\mr q)>G_{\mr c_{i+1}}(\mr q))\le \prob(G_{\mr c_{i+1}}(\mr q)>G_{\mr c_{i+2}}(\mr q))
\end{equation}
for all $i$ such that $\mr c_i,\mr c_{i+1}, \mr c_{i+2}$ are all in the lower left of $\mr q$. 
Note that 
\begin{equation}
\label{eq:aux_041}
\prob\left(G_{\mr c_i}(\mr q)>G_{\mr c_{i+1}}(\mr q)\right)=\prob\left(G_{\mr c_{i+1}}(\mr q+\mr v)>G_{\mr c_{i+2}}(\mr q+\mr v)\right)
\end{equation}
due to the translation invariance of DLPP\footnote{In the periodic DLPP case, we even have  $H_{\mr c_i}(\mr q)=H_{\mr c_{i+1}}(\mr q+\mr v)$ and $H_{\mr c_{i+1}}(\mr q)=H_{\mr c_{i+2}}(\mr q+\mr v)$ due to the periodicity. See Figure~\ref{fig:monotocity}.}.
See Figure~\ref{fig:monotocity} for an illustration. 
Since the DLPP model is 2-dimensional, we observe that the maximal path from $\mr c_{i+1}$ to $\mr q+\mr v$ intersects with the maximal path from $\mr c_{i+2}$ to $\mr q$. This implies that 
\begin{equation}
G_{\mr c_{i+1}}(\mr q)+ G_{\mr c_{i+2}}(\mr q+\mr v)\ge G_{\mr c_{i+1}}(\mr q+\mr v)+G_{\mr c_{i+2}}(\mr q),
\end{equation}
and hence 
\begin{equation}
\prob\left(G_{\mr c_{i+1}}(\mr q+\mr v)>G_{\mr c_{i+2}}(\mr q+\mr v)\right)\le \prob(G_{\mr c_{i+1}}(\mr q)>G_{\mr c_{i+2}}(\mr q)).
\end{equation}
Together with~\eqref{eq:aux_041}, this proves~\eqref{eq:aux_040}.

We now prove~\eqref{eq:aux_2016_08_15_02} when $i\le j- 2$. 
The case when $i\ge j+2$ is similar. 
We have
\begin{equation}
\begin{split}
\prob\left(G_{\mr c_i}(\mr q)>G_{\mr c_j}(\mr q)\right)
	&\le \prob\left(G_{\mr c_{i}}(\mr q)>G_{\mr c_{i+1}}(\mr q)\right)+\prob\left(G_{\mr c_{i+1}}(\mr q)>G_{\mr c_j}(\mr q)\right)\\
	&\le \prob\left(G_{\mr c_{j-1}}(\mr q)>G_{\mr c_j}(\mr q)\right)+\prob\left(G_{\mr c_{i+1}}(\mr q)>G_{\mr c_j}(\mr q)\right)
\end{split}
\end{equation}
where we used the monotonicity~\eqref{eq:aux_040} a few times to obtain the second inequality. Using the above inequality recursively we find
\begin{equation}
\prob\left(G_{\mr c_i}(\mr q)>G_{\mr c_j}(\mr q)\right)\le (j-i)\prob\left(G_{\mr c_{j-1}}(\mr q)>G_{\mr c_j}(\mr q)\right).
\end{equation}
Then we apply~\eqref{eq:aux_2016_08_15_02} with $i=j-1$ and note that $j\le  O(t/L)\ll O(t)$. We thus obtain
\begin{equation}
\prob\left(G_{\mr c_i}(\mr q)>G_{\mr c_j}(\mr q)\right)\le e^{-t^{c\epsilon}}.
\end{equation}
Hence the proof of Lemma~\ref{lm:corners_trivial} is complete. 

\subsubsection{Proof of Lemma~\ref{lm:corners_trivial2}} 


Proof for Lemma~\ref{lm:corners_trivial2} is similar. 
In this case the time sequence satisfies $t=s_{j}N$ instead of $t$ being in between $(s_{j}+\epsilon') N$ and $(s_{j+1}-\epsilon')N$. And the point $\mr q$ is given by~\eqref{eq:aux_2016_08_23_01} which is  same as~\eqref{eq:aux_2016_08_15_01} with $u=0$. The only difference in this case is that $i=j$ and $i=j-1$ both should be considered as the maximizer of $d(\mr q- \mr c_i)$. 
The rest of the argument is the same. 

\subsubsection{A remark for Section~\ref{sec:theorem3ab}} 
\label{sec:proof_Lemma_1_others}

A variation of Lemma~\ref{lm:corners_trivial} is used in Section~\ref{sec:theorem3ab} to prove Theorem~\ref{thm:limiting_process_step_shock} (a) (and (b) similarly). 
With the new $t=s_{j}N$ and an additional restriction $u_i>0$ for $1\le i\le k$, the change is that
the inequality~\eqref{eq:aux_2016_08_09_02} should be checked separately when $i=j-1$. In this case the left hand side of~\eqref{eq:aux_2016_08_09_02} is $0$. However, the first term on the right hand side of~\eqref{eq:aux_2016_08_09_02} is negative (since $u>0$) and at least of order
	\begin{equation}
	O(t^{2/3}N^{-1}\ttt_N^{-1})=O(t^{-1/3})
	\end{equation}
	which dominate the second term $O(t^{\frac13+\frac{2}{9}\epsilon} N^{-1})$. Therefore~\eqref{eq:aux_2016_08_09_02} still holds for sufficiently large $N$.

\subsection{Proof of Lemma~\ref{lm:corner_main2}}
\label{sec:proof_Lemma_3}

The first equation of Lemma~\ref{lm:corner_main2} is similar to Corollary 2.7 of \cite{Ferrari-Nejjar15}, which follows from a general theorem in the same paper (see Theorem 2.1 in \cite{Ferrari-Nejjar15}). 
We can apply this general theorem to our case. The only change from Corollary 2.7 of \cite{Ferrari-Nejjar15} is that in order to check Assumption 3 for Theorem 2.1 in \cite{Ferrari-Nejjar15}, we use Proposition~\ref{prop:tail_transversal}, which is a stronger tail estimate than used in \cite{Ferrari-Nejjar15}. 
This is needed since in our case $|\mr q|$ can be as large as $O(N^{3/2-\epsilon})$ which was only $O(N)$ in \cite{Ferrari-Nejjar15}.
Alternatively the first equation also follows by the same argument given below for the periodic TASEP.

For the second equation of Lemma~\ref{lm:corner_main2}, the result of Ferrari and Nejjar is not applicable directly 
due to the periodicity. This periodicity implies that the maximal paths $\pi_{\mr c_{j-1}}^{max}(\mr q)$ and $\pi_{\mr c_{j}}^{max}(\mr q)$ are not independent near the corners $\mr c_{j-1}$ and $\mr c_j$ respectively. Also note that these two paths may intersect near $\mr q$. To handle these dependencies, we need to consider the maximal paths with new starting (and ending) points such that the new paths are asymptotically independent, then compare the last passage times given by the original paths and the new ones.
This idea is in \cite{Ferrari-Nejjar15} in which the dependence near $\mr q$ was handled. 



Note that by using Propositions~\ref{prop:tail_transversal} and~\ref{prop:tail_transversal2}, both maximal paths from $\mr c_{j-1}$ and $\mr c_j$ to $\mr q$ are bounded in a strip with width of order $O(t^{2/3+\kappa\epsilon})\le  O(N^{1-\kappa\epsilon^2})$ with high probability, where $\kappa=4/9$ and $\epsilon$ is the constant defined in Theorem~\ref{thm:limiting_process_step} such that $t< C N^{3/2-\epsilon}$. Denote these two strips by $\Omega_{j-1}$ and $\Omega_j$.

We then pick lattice points $\mr p$ and $\mr p'$ neighboring to $\overline{\mr q\mr c_{j-1}}$, and $\mr r$ and finally $\mr r'$ neighboring to $\overline{\mr q\mr c_j}$, such that the following conditions are satisfied:

(1) The part of the strip $\Omega_{j-1}$ between $\mr p$ and $\mr p'$ does not ``intersect'' that of $\Omega_j$ between $\mr r$ and $\mr r'$, here we say a set $A\subseteq \realR^2$ does not ``intersect'' another set $B\subseteq\realR^2$ if and only if $A\cap (B+i\mr v)=\emptyset$ for all integer $i$. 

(2) $d(\mr q-\mr p)$, $d(\mr p'-\mr c_{j-1})$, $d(\mr q-\mr r)$, and $d(\mr r'-\mr c_j)$ are all bounded by $ctN^{-\kappa\epsilon^2}$.

(3) $d(\mr q-\mr p)+d(\mr p'-\mr c_{j-1})=d(\mr q-\mr r)+d(\mr r'-\mr c_j)+O(1)$.

\begin{figure}
	\centering
	\includegraphics[scale=0.35]{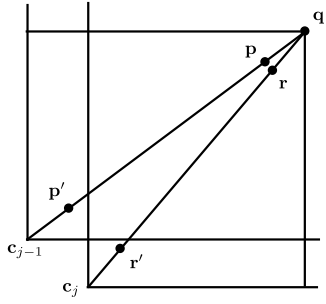}
	\caption{Illustration of the lattice points $\mr p,\mr p',\mr r,$ and $\mr r'$}
	\label{fig:two_corners}
\end{figure}

See Figure~\ref{fig:two_corners} for an illustration of these points.
To proceed, we need the following lemma. 
\begin{lm}[Lemma 4.2 in \cite{Ben_Arous-Corwin11}]
	\label{lm:Ben_Arous-Corwin}
	Assume $X_N\ge \tilde X_N$ and $X_N\Rightarrow D_1$ as well as $\tilde X_N\Rightarrow D_1$; and similarly $Y_N \ge \tilde Y_N$ and $Y_N\Rightarrow D_2$ as well as $\tilde Y_N\Rightarrow D_2$. 
	Then if $\max\{\tilde X_N,\tilde Y_N\}\Rightarrow D_3$, we also have $\max\{X_N,Y_N\}\Rightarrow D_3$.
\end{lm}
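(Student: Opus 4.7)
The plan is to reduce everything to showing that the ``gaps'' $X_N-\tilde X_N$ and $Y_N-\tilde Y_N$ both tend to zero in probability, after which an application of a Slutsky-type argument and the continuous mapping theorem yields the convergence of $\max\{X_N,Y_N\}$ to $D_3$. The nontrivial fact is the first part: even though we only have the marginal convergences $X_N\Rightarrow D_1$ and $\tilde X_N\Rightarrow D_1$ (and no joint convergence is assumed), the pointwise inequality $X_N\ge \tilde X_N$ forces the gap to collapse.

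More precisely, first I would prove the intermediate claim: if $U_N\ge V_N$ almost surely and $U_N\Rightarrow D$ and $V_N\Rightarrow D$, then $U_N-V_N\to 0$ in probability. The idea is to fix $\epsilon>0$ and $\eta>0$, and to pick a finite grid $a_0<a_1<\cdots<a_k$ of continuity points of $F_D$ with consecutive spacing less than $\epsilon$ and with $F_D(a_0)<\eta$ and $1-F_D(a_k)<\eta$. On the event $\{a_i<U_N\le a_{i+1},\,U_N-V_N>\epsilon\}$ one automatically has $V_N\le a_i<U_N$, so this event is contained in $\{U_N>a_i,\,V_N\le a_i\}$, whose probability equals $\mathbb{P}(U_N>a_i)-\mathbb{P}(V_N>a_i)$ by the monotonicity (so $\{V_N>a_i\}\subseteq\{U_N>a_i\}$). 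By the two marginal convergences, each of these finitely many probabilities tends to zero, and the remaining event $\{U_N\notin(a_0,a_k]\}$ has probability at most $2\eta$ in the limit. Applying this claim to $(X_N,\tilde X_N)$ and $(Y_N,\tilde Y_N)$ gives $X_N-\tilde X_N\to 0$ and $Y_N-\tilde Y_N\to 0$ in probability.

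Next I would use the elementary sandwich
\begin{equation}
0\le \max\{X_N,Y_N\}-\max\{\tilde X_N,\tilde Y_N\}\le (X_N-\tilde X_N)+(Y_N-\tilde Y_N),
\end{equation}
which follows from $X_N\ge \tilde X_N$, $Y_N\ge \tilde Y_N$ and the definition of $\max$. Combined with the previous step, this shows $\max\{X_N,Y_N\}-\max\{\tilde X_N,\tilde Y_N\}\to 0$ in probability. Since $\max\{\tilde X_N,\tilde Y_N\}\Rightarrow D_3$ by assumption, Slutsky's theorem immediately yields $\max\{X_N,Y_N\}\Rightarrow D_3$.

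The main (and really only) obstacle is the intermediate claim in the second paragraph; the rest is bookkeeping. The subtlety there is that one cannot simply invoke Skorohod representation to upgrade each marginal convergence to almost-sure convergence on a common space while preserving the joint inequality $X_N\ge \tilde X_N$, so one must work directly with distribution functions and exploit the monotonicity $\mathbb{P}(X_N\le x)\le \mathbb{P}(\tilde X_N\le x)$ through the finite discretization described above.
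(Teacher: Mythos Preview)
Your proof is correct. The paper does not actually prove this lemma; it is simply quoted with a citation to Ben~Arous--Corwin and then used. So there is nothing in the paper to compare against, and what you have written is a complete, self-contained argument.

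A brief check of the two places where something could go wrong: (i) In the intermediate claim, the containment $\{a_i<U_N\le a_{i+1},\,U_N-V_N>\epsilon\}\subseteq\{U_N>a_i,\,V_N\le a_i\}$ indeed follows from the spacing condition $a_{i+1}-a_i<\epsilon$, and the identity $\mathbb{P}(U_N>a_i,\,V_N\le a_i)=\mathbb{P}(U_N>a_i)-\mathbb{P}(V_N>a_i)$ is exactly the monotonicity $\{V_N>a_i\}\subseteq\{U_N>a_i\}$. Since the grid is finite and each $a_i$ is a continuity point of $F_D$, the sum tends to zero and the tail mass is controlled by $2\eta$. (ii) The sandwich inequality $0\le\max\{X_N,Y_N\}-\max\{\tilde X_N,\tilde Y_N\}\le (X_N-\tilde X_N)+(Y_N-\tilde Y_N)$ is valid because both gaps are nonnegative and $\max\{a+c,b+d\}\le\max\{a,b\}+c+d$ for $c,d\ge0$. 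Slutsky then finishes.

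One small stylistic remark: you could tighten the sandwich to $\max\{X_N,Y_N\}-\max\{\tilde X_N,\tilde Y_N\}\le\max\{X_N-\tilde X_N,\,Y_N-\tilde Y_N\}$, but your weaker bound is already sufficient.
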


Set
\begin{equation}
\begin{split}
X_N&=\frac{H_{\mr c_{j-1}}(\mr q)-d(\mr q-\mr c_{j-1})-H_{\mr c_{j-1}}(\mr p')+d(\mr p'-\mr c_{j-1})-H_{\mr p}(\mr q)+d(\mr q-\mr p)}{\arao^{-1}s(\mr q-\mr c_{j-1})},\\
\tilde X_N&= \frac{H_{\mr p'}(\mr p)-d(\mr p-\mr p')}{\arao^{-1}s(\mr q-\mr c_{j-1})},\\
Y_N&=\frac{H_{\mr c_{j}}(\mr q)-d(\mr q-\mr c_{j})-H_{\mr c_{j}}(\mr r')+d(\mr r'-\mr c_{j})-H_{\mr r}(\mr q)+d(\mr q-\mr r)}{s(\mr q-\mr c_j)},\\
\tilde Y_N&=\frac{H_{\mr r'}(\mr r)-d(\mr r-\mr r')}{s(\mr q-\mr c_j)},
\end{split}
\end{equation}
where $\arao=\arao(N)$ is defined in~\eqref{eq:aux_2016_08_24_03}.
Then from~\eqref{eq:aux_2016_08_21_02} and the definition of $\mr p,\mr p',\mr r, \mr r'$,  these random variables satisfy the conditions of Lemma~\ref{lm:Ben_Arous-Corwin} with 
$D_1$ being a GUE Tracy-Widom random variable times $r=\lim_{N\to\infty}R$ and $D_2$ also being a GUE Tracy-Widom distribution. 
Now we note that Proposition~\ref{prop:tail_transversal2} implies that the maximal path from $\mr p'$ to $\mr p$ and the maximal path  from $\mr r'$ to $\mr r$ stay in $\Omega_{j-1}$ and $\Omega_j$ respectively with high probability.
Therefore the two random variables $H_{\mr p'}(\mr p)$ and $H_{\mr r'}(\mr r)$ are independent with high probability. 
Hence 
we have 
\begin{equation}
\lim_{N\to\infty}\prob_{\mr v}\left(\max\{\tilde X_N,\tilde Y_N\} \le x\right)=\FGUE(x)\FGUE(r^{-1}x),
\end{equation}
where $r$ comes from the ratio between $s(\mr c_j)$ and $s(\mr c_{j-1})$.
Now from Lemma~\ref{lm:Ben_Arous-Corwin}, we obtain 
\begin{equation}
\label{eq:aux_2016_08_24_02}
\lim_{N\to\infty}\prob_{\mr v}\left(\max\{X_N,Y_N\} \le x\right)=\FGUE(x)\FGUE(r^{-1}x).
\end{equation}

The second equation of Lemma~\ref{lm:corner_main2} we would like to prove can be written as 
\begin{equation}
\label{eq:aux_2016_08_24_01}
	\lim_{N\to\infty}\prob_{\mr v}\left(\max\left\{X_N^*,Y_N^*	\right\}\le x\right)=\FGUE(x)\FGUE(r^{-1}x).
\end{equation}
with
\begin{equation}
X_N^*=\frac{H_{\mr c_{j-1}}(\mr q)-d(\mr q-\mr c_{j-1})}{\arao^{-1}s(\mr q-\mr c_{j-1})},\qquad Y_N^*=\frac{H_{\mr c_{j}}(\mr q)-d(\mr q-\mr c_{j})}{s(\mr q-\mr c_j)}.
\end{equation}
In order to derive~\eqref{eq:aux_2016_08_24_01} from~\eqref{eq:aux_2016_08_24_02}, we note that 
Lemma~\ref{prop:tail_estimate} and Proposition~\ref{prop:tail_transversal} imply that 
\begin{equation}
\lim_{N\to\infty} \prob_{\mr v}\left(|X_N-X_N^*|>\epsilon\right)=0, \qquad\lim_{N\to\infty} \prob_{\mr v}\left(|Y_N-Y_N^*|>\epsilon\right)=0
\end{equation}
for arbitrary $\epsilon>0$. 
Using the simple inequality,
\begin{equation}
\left|\max\{X_N^*,Y_N^*\}-\max\{X_N,Y_N\}\right|\le \max\{|X_N-X_N^*|, |Y_N-Y_N^*|\},
\end{equation}
we obtain that the left hand sides of~\eqref{eq:aux_2016_08_24_02} and~\eqref{eq:aux_2016_08_24_01} are equal. Thus~\eqref{eq:aux_2016_08_24_01} follows.

\subsection{Proof of Lemma~\ref{lm:flat_trivial}}
\label{sec:proof_Lemma_flat_1}
We note that 
\begin{equation}
d(\mr q-\tilde{\mr c}_i) = \left(\sqrt{(1-\rho)\left(t-\rho^{-1}(N-i)\right)}+\sqrt{N-i}\right)^2+o(N).
\end{equation}
As a function of $i$, its maximum occurs at $i= N-\rho^2 t$. 
Hence if $j$ is not in $I$, i.e., $|j-N+\rho^2 t|\ge N/4$, then $d(\mr q-\tilde{\mr c}_j)$ is less than the maximum of $d(\mr q-\tilde{\mr c}_i)$, $i\in I$, and the difference is of at least $O(N)$. 
More rigorously, we write
\begin{equation}
\rho\cdot d(\mr q-\tilde{\mr c}_i) = \rho t-\left(\sqrt{\rho^2 t-\rho(N-i)}-\sqrt{(1-\rho)(N-i)}\right)^2+o(N).
\end{equation}
If $N-i=\rho^2 t+O(1)$, the above equation equals to $\rho t+o(N)$. On the other hand, if $N-i\le \rho^2t-N/4$, we have
\begin{equation}
\sqrt{\rho^2 t-\rho(N-i)}-\sqrt{(1-\rho)(N-i)}\ge \sqrt{\rho^2(1-\rho)t+\frac{\rho N}{4}}-\sqrt{\rho^2(1-\rho)t-\frac{(1-\rho)N}{4}}
\end{equation}
which is at least $O(N(t+N)^{-1/2})$. Therefore we obtain
\begin{equation}
\label{eq:aux_2016_08_15_05}
\rho\cdot d(\mr q-\tilde{\mr c}_i)\le \rho^2 t- cN^2(t+N)^{-1}+o(N)
\end{equation}
for some $c>0$. Similarly if $\rho^2t+N/4\le N-i\le \rho t$, we have the same bound~\eqref{eq:aux_2016_08_15_05}. Note that $N^2(t+N)^{-1}\ge  t^{1/3+\epsilon}$. By using~\eqref{eq:aux_2016_08_15_05} and Lemma~\ref{lm:comparison_DLPP}, we obtain Lemma~\ref{lm:flat_trivial}.

\appendix
\section{Density profile of TASEP with periodic step initial condition}
\label{sec:appendix}

In this appendix, we summarize the macroscopic picture of the periodic TASEP and the infinite TASEP with periodic step initial condition~\eqref{eq:step_ic} via solving the Burger's equation. 
We state the density profile, and the locations of the shock and any given particle as time $t$ without much details since the computation is standard. 
Furthermore we do not study the issue of the convergence in the hydrodynamic limit to the Burger's solution; the computations in this Appendix are used only to provide intuitive ideas and are not used in the proofs of the theorems.


We assume that $0<\rho\le \frac12$. 
Consider the Burger's equation for the infinite TASEP
\begin{equation}
\frac{\dd }{\dd t}q(x;t) +\frac{\dd }{\dd x}\left(q(x,t)(1-q(x,t))\right)=0
\end{equation}
with the periodic initial condition
\begin{equation}
q(x;0)=\begin{dcases}
1,\qquad &-\rho\le x-[x]-1\le 0,\\
0,\qquad &0< x-[x]< 1-\rho,
\end{dcases}
\end{equation}
where $[x]$ means the largest integer which is less than or equal to $x$. 
The entropy solution $q(x;t)$ represents the local density profile at location $xL$ and time $tL$. Note that the solution is also periodic, $q(x+1, t)=q(x,t)$, and hence $q(x;t)$ also represents the local density profile for the periodic TASEP with the same initial condition.

We now solve the above Burger's equation explicitly. 
Due to the periodicity, we state the formula of $q(x;t)$ for $x$ only in an interval of length $1$.

For time $t\le \frac1{4\rho}$, there is no shock and the solution is given by the following: 
For $0\le t\le \rho$,
\begin{equation}
q(x;t)=\begin{dcases}
1,& -\rho\le x\le -t,\\
\frac{1}{2}-\frac{1}{2t}x,& -t<x<t,\\
0,& t\le x<1-\rho.
\end{dcases}
\end{equation}
For $\rho \le t\le \frac{1}{4\rho}$,
\begin{equation}
q(x,t)=	\begin{dcases}
\displaystyle
\frac{1}{2}-\frac{1}{2t}x,	& -2\sqrt{\rho t} +t\le x\le t,\\
0,	&t<x< -2\sqrt{\rho t} +t+1.
\end{dcases}
\end{equation}

The shocks are generated at time $t=\frac{1}{4\rho}$ at the locations $\frac1{4\rho}+\intZ$. 
(In terms of the TASEP, the above time corresponds to the time $\frac{1}{4\rho}L=\frac{1}{4\rho^2}N$.)
Let us denote by $x_{\rm s}(t)$ the location of the shock of the Burger's equation at time $t$ 
which was initially generated at the location $-1+\frac1{4\rho}$, i.e.  
$x_{\rm s}(\frac1{4\rho})=-1+\frac1{4\rho}$. 
One can find that the shock location is given by 
\begin{equation} \label{eq:shockloca}
	x_{\rm s}(t)=-\frac{1}{2}+(1-2\rho)t 
\end{equation}
and the density profile is given by 
\begin{equation}
q(x;t)=\frac12-\frac{1}{2t}x, \quad x_{\rm s}(t)\le x < x_{\rm s}(t)+1
\end{equation}
for all $t\ge \frac{1}{4\rho}$.
This shows that the density profile difference at the shock, $\Delta q_{\rm s}(t):= \lim_{x\to x_{\rm s}(t)^+} q(x; t)-\lim_{x\to x_{\rm s}(t)^-} q(x; t)$, is given by $\Delta q_{\rm s}(t)=\frac1{2t}$ at time $t\ge \frac{1}{4\rho}$. 
As $t\to\infty$, this gap tends to zero and $q(x;t)\to \rho$ for all $x\in \realR$.
However, the density profile is not yet ``flat enough'' when $t\ll L^{1/2}$ (which corresponds to the sub-relaxation time scale $t\ll L^{3/2}$ in TASEP). 
Indeed, note that that when $t\ll L^{1/2}$, the gap satisfies $ \Delta q_{\rm s}(t)\gg \frac1{L^{1/2}}$ (and 
 the absolute value of the slope of the density profile at continuous points is $\gg \frac1{L^{1/2}}$.)
In terms of the TASEP scale of time and space, $\Delta q_{\rm s}(t) L \gg L^{1/2} \gg (tL)^{1/3}$ which means that the gap is greater than the KPZ height fluctuations. 

Given the formula of the density profile, we can compute the 
expected location of the $[\alpha N]$-th particle (the one initially located at $-N+[\alpha N]$) heuristically. Here $\alpha$ is an arbitrary constant satisfying $0<\alpha\le 1$. This particle meets a shock at the discrete (rescaled by $L$) times
\begin{equation}
\frac{\left(\sqrt{k-\alpha} +\sqrt{k+1-\alpha}\right)^2}{ 4\rho }, \qquad k=1,2,\cdots.
\end{equation}
The particles location (rescaled by $L$) is heuristically given by 
\begin{equation}
	\label{eq:aux_002}
	\begin{split}
	X_{\alpha}(t)	&=(\sqrt{t}-\sqrt{(k+1-\alpha)\rho})^2-(k+1-\alpha)\rho+k,\\
	&=t(1-\rho)+(\sqrt{t\rho}-\sqrt{k+1-\alpha})^2-(1-\rho)(1-\alpha)+X_{\alpha}(0)
	\end{split}
\end{equation}
for time satisfying
\begin{equation}
	\label{eq:aux_003}
	\frac{\left(\sqrt{k-\alpha} +\sqrt{k+1-\alpha}\right)^2}{ 4\rho }\le t <\frac{\left(\sqrt{k+1-\alpha} + \sqrt{k+2-\alpha}\right)^2}{ 4\rho }.
\end{equation}


\def\cydot{\leavevmode\raise.4ex\hbox{.}}

\end{document}